\newcommand{\BC}{{\mathbb {C}}}
\newcommand{\BE}{{\mathbb {E}}}
\newcommand{\BF}{{\mathbb {F}}}
\newcommand{\BN}{{\mathbb {N}}}
\newcommand{\BQ}{{\mathbb {Q}}}
\newcommand{\BR}{{\mathbb {R}}}
\newcommand{\BZ}{{\mathbb {Z}}}
\newcommand{\bJ}{{\mathbf {J}}}
\newcommand{\CB}{{\mathcal {B}}}
\newcommand{\CC}{{\mathcal {C}}}
\newcommand{\CF}{{\mathcal {F}}}
\newcommand{\CG}{{\mathcal {G}}}
\newcommand{\CJ}{{\mathcal {J}}}
\newcommand{\CK}{{\mathcal {K}}}
\newcommand{\CM}{{\mathcal {M}}}
\newcommand{\CN}{{\mathcal {N}}}
\newcommand{\CT}{{\mathcal {T}}}
\newcommand{\CU}{{\mathcal {U}}}
\newcommand{\CW}{{\mathcal {W}}}
\newcommand{\CZ}{{\mathcal {Z}}}
\newcommand{\FA}{{\mathfrak {A}}}
\newcommand{\FF}{{\mathfrak {F}}}
\newcommand{\FO}{{\mathfrak {O}}}
\newcommand{\FP}{{\mathfrak {P}}}
\newcommand{\Fk}{{\mathfrak {k}}}
\newcommand{\Fo}{{\mathfrak {o}}}
\newcommand{\Fp}{{\mathfrak {p}}}
\newcommand{\Vol}{{\mathrm {Vol}}}
\newcommand{\RB}{{\mathrm {B}}}
\newcommand{\RI}{{\mathrm {I}}}
\newcommand{\RK}{{\mathrm {K}}}
\newcommand{\RM}{{\mathrm {M}}}
\newcommand{\RP}{{\mathrm {P}}}
\newcommand{\RQ}{{\mathrm {Q}}}
\newcommand{\RU}{{\mathrm {U}}}
\newcommand{\RZ}{{\mathrm {Z}}}
\newcommand{\End}{{\mathrm{End}}}
\newcommand\Frob{\textup{Frob}}
\newcommand\Gal{\textup{Gal}}
\newcommand{\GL}{{\mathrm{GL}}}
\newcommand{\Hom}{{\mathrm{Hom}}}
\newcommand{\Ind}{{\textup{Ind}}}
\newcommand{\cInd}{{\mathrm{c\text{-}Ind}}}
\newcommand{\I}{{\mathrm{I}}}
\newcommand{\Ker}{{\mathrm{Ker}}}
\newcommand{\ord}{{\mathrm{ord}}}
\renewcommand{\Re}{{\mathrm{Re}}}
\newcommand{\Tr}{{\mathrm{Tr}}}
\newcommand{\wh}{\widehat}
\def\sig{{\sigma}}
\newtheorem*{thm*}{Theorem}
\newtheorem{thm}{Theorem}[section]
\newtheorem{cor}[thm]{Corollary}
\newtheorem{lem}[thm]{Lemma}
\newtheorem{lm}[thm]{Lemma}
\newtheorem{prop}[thm]{Proposition}
\newtheorem{conj}[thm]{Conjecture}
\newtheorem{ques/conj}[thm]{Question/Conjecture}
\newtheorem{ques}[thm]{Question}
\newtheorem{defn}[thm]{Definition}
\newtheorem{rmk}[thm]{Remark}
\newtheorem{exmp}[thm]{Example}
\newcommand{\Rmnum}[1]{\expandafter\@slowromancap\romannumeral #1@}
  \newcommand{\blue}[1]{\textcolor{blue}{{#1}}}
\newcommand{\Nr}{{\mathrm{Nr}}}
\newcommand{\apair}[1]{\left\langle {#1} \right\rangle}
\newcommand{\ov}{\overline}
\newcommand{\Irr}{{\mathrm{Irr}}}
\numberwithin{equation}{section}
\def\AA{\mathbb{A}}
\def\CC{\mathbb{C}}
\def\FF{\mathbb{F}}
\def\GG{\mathbb{G}}
\def\QQ{\mathbb{Q}}
\def\ZZ{\mathbb{Z}}
\newcommand\cK{\mathcal{K}}
\newcommand\cL{\mathcal{L}}
\def\bJ{\mathbf{J}}
\def\bR{\mathbf{R}}
\newcommand\AS{\textup{AS}}
\newcommand{\Res}{\textup{Res}}
\newcommand\Spec{\textup{Spec}\ }
\newcommand\uEnd{\underline{\End}}
\newcommand{\Nm}{\textup{Nm}}
\newcommand{\Gm}{\GG_m}
\newcommand{\incl}{\hookrightarrow}
\newcommand{\Qlbar}{\overline{\QQ}_\ell}
\newcommand\ot{\otimes}
\newcommand{\cohog}[2]{\textup{H}^{#1}({#2})}     
\newcommand{\cohoc}[2]{\textup{H}_{c}^{#1}({#2})}     
\renewcommand\a\alpha
\renewcommand\b\beta
\newcommand\g\gamma
\renewcommand\d\delta
\newcommand{\s}{\sigma}
\newcommand{\ep}{\epsilon}
\newcommand{\kbar}{\overline{k}}
\newcommand{\Kl}{\textup{Kl}}
\newcommand{\Swan}{\textup{Swan}}
\title{Converse Theorem Meets Gauss Sums}
\author{Chufeng Nien}
\address{Department of Mathematics, National Cheng Kung University, Tainan 701, Taiwan}
\email{nienpig@mail.ncku.edu.tw}
\author{Lei Zhang \\
(with an appendix by Zhiwei Yun)}
\address{Department of Mathematics,
National University of Singapore,
Singapore 119076}
\email{matzhlei@nus.edu.sg}
\address{Department of Mathematics, Massachusetts Institute of Technology, 77 Massachusetts Ave, Cambridge, MA 02139}
\email{zyun@mit.edu}
\date{}
\keywords{ Whittaker models, Gamma factors,  Gauss sums, $p$-adic Gamma function, Kloosterman sheaves}
\begin{document}

\begin{abstract}
This paper verifies $n\times 1$ Local Converse Theorem for twisted gamma factors of irreducible cuspidal representations of $\GL_n(\BF_p)$, for $n\leq 5,$ and  of irreducible generic representations, for $n<\frac{q-1}{2\sqrt{q}}+1$ in the appendix by Zhiwei Yun, where $p$ is a prime and q is a power of $p$. 
 The counterpart of $n\times 1$ converse theorem for level zero cuspidal representations also follows the established relation between gamma factors of $\GL_n(\CF)$ and that  of $\GL_n(\BF_q)$, where $\CF$ denotes a $p$-adic field whose residue field is isomorphic to $\BF_q.$
For $n=6,$ examples failed $n\times 1$ Local Converse Theorem over finite fields are provided and the authors propose a set of primitive representations, for which $n\times 1$ gamma factors should be able to detect a unique element in it.

For $m,\ n\in \BN,$ in the spirit  of Langlands functorial lifting, we formulate a conjecture to relate $n\times m$ gamma factors of finite fields with Gauss sums over extended fields.   
\end{abstract}

\thanks{The work of the first named author is supported by  Ministry of Science and Technology, Taiwan 104-2115-M-006-008-,
and that of the second named author is supported in part by AcRF Tier 1 grant R-146-000-237-114 of National University of Singapore.
2010 Mathematics Subject Classification. Primary 11T24; Secondary
11Z05.
} 
\maketitle

\section{Motivation}

Let $F$ be either a $p$-adic field $\CF$ or a finite field $\BF_q$.
In the celebrated paper \cite{JP-SS83}, Jacquet, Piatetski-Shapiro, and Shalika 
defined the  local gamma factors $\Gamma(s,\pi\times\tau,\psi)$ through the functional equation of Rankin-Selberg integrals for a pair of irreducible generic representations, $\pi$ of $\GL_n(\CF)$ and $\tau$ of $\GL_t(\CF).$   
Then a natural question in representation theory arises: "How much invariants of a cuspidal (or generic) 
representation can be determined in terms of its twisted gamma factors?"
 \begin{ques}[$n\times m$ Local Converse Problem]\label{que:converse}
Let $\pi_1$ and $\pi_2$ be irreducible (super)cuspidal representations representations of $\GL_n(F)$.
Suppose that the gamma factors $\Gamma(s,\pi_1\times\tau,\psi)=\Gamma(s,\pi_2\times\tau,\psi)$ for all  $\tau\in\CT(m)$,
where   $m\leq n$ and
$$\CT(m)=\coprod_{1\leq d\leq m}\{\tau\colon \text{ $\tau$ is an irreducible generic representation of $\GL_d(F)$}\}.$$ 
Can we deduce that $\pi_1\cong \pi_2$? 
\end{ques}
 In 1993, Henniart answered Question \ref{que:converse} with $n\times (n-1)$ Local Converse Theorem in \cite{He93}.
 In 1996, Jeff Chen proved $n\times (n-2)$ Local Converse Theorem in \cite{Ch96}. 
However, a conjecture credited to Jacquet predicted that $m=[\frac{n}{2}]$ should be enough to confirm 
Question \ref{que:converse} positively. 
For a while, no further  progress has been made towards Jacquet's conjecture on Local Converse Problem and eventually people try to look for inspiration from finite field case. 

Roditty in her thesis \cite{Ro10}, under the supervision of Soudry, considered the finite field analogue of gamma factors $\gamma( \pi_1\times\tau,\psi)$ for an irreducible cuspidal represntation of $\GL_n(\BF_q)$ and 
an irreducible generic represntation $\tau$ of $\GL_n(\BF_q),$ for $n>t.$ 
She also verified $n\times (n-1)$ and $n\times (n-2)$ Local Converse Theorems for $\GL_n(\BF_q).$ 
In 2014, the first named author verified the finite field analogue of Jacquet's conjecture as follows. 
\begin{thm}[{$n\times [\frac{n}{2}]$ Local Converse Theorem for finite field case}, \cite{Ni14}]\label{finiteJ}
Let $\pi_1$ and $\pi_2$ be two irreducible cuspidal
representations of $\GL_n(\BF_q)$ with the same central character. 
If $$\gamma(\pi_1\times \tau,\psi)=\gamma(\pi_2\times \tau,\psi),$$ for all irreducible generic representation of
$\GL_r(\BF_q),$ $1\le r \le [\frac{n}{2}],$ then $\pi_1\cong \pi_2.$
\end{thm}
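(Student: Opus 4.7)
The plan is to reduce the statement to an equality of Bessel functions. To each irreducible cuspidal (hence generic) representation $\pi$ of $\GL_n(\BF_q)$ one attaches a \emph{Bessel function} $\CB_{\pi,\psi}\colon \GL_n(\BF_q)\to\BC$ which is bi-$(N,\psi)$-equivariant, and by uniqueness of the Whittaker model the representation $\pi$ is uniquely determined by $\CB_{\pi,\psi}$. So the whole theorem is equivalent to the identity $\CB_{\pi_1,\psi}=\CB_{\pi_2,\psi}$, and the task is to extract that pointwise identity from the given gamma factor hypotheses.

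The first step is to convert gamma factor information into Bessel function information. Using the finite-field Rankin--Selberg functional equation of Jacquet--Piatetski-Shapiro--Shalika (adapted to $\BF_q$ by Roditty), $\gamma(\pi\times\tau,\psi)$ is computed by an explicit finite sum that pairs $\CB_{\pi,\psi}$ against the Whittaker model of $\tau$, evaluated along a Weyl translate $w_{n,r}\cdot x$ with $x$ ranging over $N_r\backslash \GL_r(\BF_q)$. Letting $\tau$ vary over all irreducible generic representations of $\GL_r(\BF_q)$ and invoking completeness / linear independence of their Bessel functions on $\GL_r(\BF_q)$ converts the gamma factor equality into a family of pointwise equalities $\CB_{\pi_1,\psi}(g)=\CB_{\pi_2,\psi}(g)$ for $g$ in an explicit subset $X_r\subset\GL_n(\BF_q)$ consisting of specific Bruhat-cell representatives.

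The crucial structural input is a support theorem for Bessel functions of cuspidal representations: $\CB_{\pi,\psi}$ is supported on a short list of Bruhat cells $NwN$ indexed by Weyl elements $w$ whose block pattern is forced by cuspidality (essentially those $w$ whose associated partition has only blocks $\leq \lfloor n/2\rfloor$, after accounting for the long element $w_0$). One checks that every such relevant cell is hit by $X_r$ for some $r\leq\lfloor n/2\rfloor$: the cells with "small" block size are reached directly, and the cells lying in the "upper half" are reduced to the "lower half" by passing to the contragredient $\widetilde\pi$ via the functional equation $\gamma(\pi\times\tau,\psi)\gamma(\widetilde\pi\times\widetilde\tau,\psi^{-1})=1$, a step for which the common central character assumption is precisely what is needed to match the two sides.

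The main obstacle, and the heart of the proof, is the combinatorial/support analysis described in the previous paragraph: making the "essential support" of $\CB_{\pi,\psi}$ for cuspidal $\pi$ of $\GL_n(\BF_q)$ precise, and verifying that twists of size at most $\lfloor n/2\rfloor$ suffice to probe every point of it. Once this is carried out, the accumulated equalities show $\CB_{\pi_1,\psi}=\CB_{\pi_2,\psi}$ on all of $\GL_n(\BF_q)$, and the uniqueness of Whittaker models yields $\pi_1\cong\pi_2$. The barrier at $\lfloor n/2\rfloor$ (rather than a smaller value) reflects exactly the dichotomy between cells reached directly and cells reached only via the contragredient.
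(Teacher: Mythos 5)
Your first reduction is fine and is indeed the standard one: by Proposition \ref{614} the gamma factor $\gamma(\pi\times\tau,\psi)$ pairs $\CB_{\pi,\psi}$ at the elements $\left(\begin{smallmatrix}0&\RI_{n-r}\\ m&0\end{smallmatrix}\right)$ against $\CB_{\tau,\psi^{-1}}(m)$, and since the Bessel functions of the irreducible generic representations of $\GL_r(\BF_q)$ are the primitive idempotents of the Hecke algebra of bi-$(\RU_r,\psi_r)$-equivariant functions (Gelfand--Graev multiplicity one), hence a basis of that space, the hypothesis for all generic $\tau$ of rank $r$ is equivalent to $\CB_{\pi_1,\psi}=\CB_{\pi_2,\psi}$ on $X_r=\{\left(\begin{smallmatrix}0&\RI_{n-r}\\ m&0\end{smallmatrix}\right): m\in\GL_r(\BF_q)\}$. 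Note also that the present paper does not prove Theorem \ref{finiteJ}; it quotes it from \cite{Ni14}, whose argument starts from this same reduction.

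The gap is in what you yourself call the heart of the proof: the asserted support theorem and the claim that the sets $X_r$, $r\le[\frac{n}{2}]$ (together with the contragredient trick), exhaust the essential support of $\CB_{\pi,\psi}$. First, the support statement is false as written: $\CB_{\pi,\psi}(zu)=\omega_\pi(z)\psi_n(u)\ne 0$ for $z\in\RZ_n$, $u\in\RU_n$, and $\RZ_n\RU_n$ lies in the cell of the trivial Weyl element, whose single block has size $n$. More decisively, coverage fails even for cells your statement allows. Writing $m=v_1\,t\,w'\,v_2$ (Bruhat in $\GL_r$), every $\RU_n$-double coset met by $X_r$ has a representative $w\,\diag(t')$ in which at least $n-r\ge\lceil n/2\rceil$ of the torus coordinates equal $1$; consequently no coset attached to a composition with three or more blocks and a general central-in-Levi torus part --- for instance the big cell $\RU_n w_0 T\RU_n$ with generic torus part, which has all blocks of size $1\le[\frac{n}{2}]$ and so is permitted by your support claim --- is ever probed by any $X_r$, $1\le r\le n-1$. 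The contragredient step does not repair this: for $g=\left(\begin{smallmatrix}0&\RI_{n-r}\\ m&0\end{smallmatrix}\right)$ one has
\begin{equation*}
g^{-1}=\begin{pmatrix}0&\RI_r\\ \RI_{n-r}&0\end{pmatrix}\diag(\RI_{n-r},\,m^{-1}),
\end{equation*}
and similarly for $w_n\,{}^tg^{-1}w_n$, so the transformed elements still have torus part $1$ on a block of size $n-r$ and never land in the family $X_{n-r}$, whose free torus block has size $n-r$. Hence the accumulated equalities cannot yield $\CB_{\pi_1,\psi}=\CB_{\pi_2,\psi}$ on all of $\GL_n(\BF_q)$ by pointwise probing, and what is really needed is the rigidity statement that the values on $\bigcup_{r\le[\frac{n}{2}]}X_r$, together with the central character, already determine a cuspidal representation. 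That statement is exactly the hard content of \cite{Ni14}, which is established there by finer properties of Bessel functions of cuspidal representations rather than by a cell-support-plus-coverage argument; as it stands your outline presupposes this step rather than proving it.
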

In \cite{JiNiS15}, Nien, Jiang and Stevens  
set up a formulation for verifying Jacquet's conjecture 
  and use this formulation to confirm many cases.
Following the formulation,  Adrian, Liu, Stevens, and Xu (\cite{ALSX16}) verified Jacquet's conjecture for $\GL_p(\CF)$ for prime $p$.
Finally,
 Jacquet and Liu (\cite{JL16}), using analytic approaches,
and Chai (\cite{Cha16}), using $p$-adic Bessel functions, independently
 settled this long standing conjecture.

Next,  we may wonder if the bound $[\frac{n}{2}]$ is sharp for Question \ref{que:converse}. 
In \cite{ALST17}, Adrian, Liu, Stevens, and Tam 
 showed that $[\frac{n}{2}]$ is the sharp bound for necessary twisting in  Local Converse Theorem for a pair of supercuspidal representations of   $\GL_n(\CF),$ when $n$ is a prime.

Inspired by \cite[Conjecture 2]{P-S01}, Piatetski-Shapiro's speculation on the $\GL_1$-twisted global converse problem, 
  rather than  looking for a minimal set of
invariants to determine a representation uniquely (up to equivalence), we are interested in how much information of representations carried by $n\times 1$ gamma factors. 
Hence we recast Local Converse Problems as follows. 
\begin{ques}\label{que:converse-finite}
Does there exist a (possibly maximal)subset $\Omega$ of irreducible generic representations, whose elements can be uniquely characterized by $n\times 1$ gamma factors? 
More precisely,  we desire a canonical set  $\Omega$ so that for given $\pi_1,\pi_2\in \Omega,$
 if $\gamma(s,\pi_1\times\omega,\psi)=\gamma(s,\pi_2\times\omega,\psi)$\footnote{When $F$ is finite, a gamma function is merely a complex number independent of the parameter $s$. Here we abuse by the notation $\gamma(s,\pi_1\times\omega,\psi)$ to indicate the similarity of related questions or conjectures over $p$-adic fields and over finite fields.}
 for all characters $\omega$ of $\GL_1(F),$ then $\pi_1\cong \pi_2$.
\end{ques}
In \cite{BuH14},  Bushnell and Henniart answered the above question with the set of
simple cuspidal representations (i.e. supercuspidal representations with
minimal positive depth) of $\GL_n(\CF)$ over $p$-adic fields. 

In this paper, we focus on the cases for the level zero supercuspidal representations and its counterpart over finite fields.
Different from those analytic technique and algebraic approaches applied on Local Converse Theorem,  we appeal to the arithmetic natural encoded in gamma factors. 
The (abelian) Gauss sums involved in the gamma factors over finite fields are algebraic integers, and 
  the fruitful results developed for (abelian) Gauss sums enable us to extract the invariants for characterizing our target representations. 

\section{Introduction}

First, we recall Green's work on irreducible (trace) characters of $\GL_n(\BF_q)$.
Throughout this paper, let $p$ denote a prime and $\BF_q$ be a finite field of cardinality $q$. 
\begin{defn}\label{regular}
A character of $\BF_{q^n}^\times$  is called {\bf regular} if it does not factor through  $\Nr_{n:d}$ for all divisors $d$ of $n$ and $1\le d<n$, where $\Nr_{n:d}$ is the reduce norm map from $\BF_{q^n}\mapsto \BF_{q^d}$. 

Two regular characters $\chi_1,\  \chi_2$ are called {\bf equivalent }
if $\chi_1=\chi_2^{q^j}$ for some integer $j$.
\end{defn}
In \cite{Gre55},  Green constructed all irreducible trace characters 
of $\GL_n(\BF_q)$ in terms of associated characters of
$$\BF_{q^{n_1}}^\times \times \BF_{q^{n_2}}^\times \times \cdots \times \BF_{q^{n_k}}^\times,
\text{ where }n_1+\cdots+n_k=n.$$ 
Moreover, 
cuspidal representations of $\GL_n(\BF_q)$ are one-to-one corresponding to the equivalence classes of the regular characters of $\BF_{q^n}^\times$, which can be viewed as the Langlands correspondence for the finite field case. 
Remark that Springer correspondence can completely parameterize all irreducible representations of $\GL_n(\BF_q)$ via Deligne-Lusztig characters. See \cite{Ca} for instance. 

Guided by the hypothesis of Langlands functoriality, there should exist a tensor lifting from $\pi\times\tau$ of $\GL_n(\BF_q)\times\GL_m(\BF_q)$ to $\Pi$ of $\GL_{mn}(\BF_q)$ such that 
$\Pi$ corresponds to the character $\chi\circ \Nr_{mn:n}\cdot \eta\circ \Nr_{mn:m}$ of $\BF^\times_{q^{mn}}$.
Note that the character $\chi\circ \Nr_{mn:n}\cdot \eta\circ \Nr_{mn:m}$ is possibly not regular and then the corresponding representation $\Pi$ is not cuspidal. 
We expect that $\Pi$ corresponds to the unique generic subrepresentation of $$\Ind_{\GL_{n_1}\times\cdots\times\GL_{n_k}}^{\GL_{mn}}\pi_1\times\cdots\times \pi_k,$$   
where $n_1+\cdots+n_k=mn$ and $\pi_i$ is a cuspidal representation of $\GL_{n_i}.$
If we extend Roditty's gamma factors to generic ones through the multiplicativity\footnote{The multiplicativity of gamma factors of $\GL_n(\BF_q)$ has been established in Soudry's unpublished note.}, then 
$\gamma(\Pi\times 1,\psi):=\prod_{i=1}^k\gamma(\pi_i\times 1,\psi)$ and we expect
$\gamma(\Pi\times 1,\psi)=q^{\frac{2-m^2-m}{2}}\gamma(\pi\times\tau,\psi).$  
To answer Question \ref{que:converse-finite}, based on the known case $m=1$, we proposed the following conjectural formula for gamma factors in terms of the corresponding characters of $\BF_{q^{mn}}^\times$.
\begin{conj}\label{conj:gamma}
For $n>m,$ 
let $\pi$ and $\tau$ be irreducible cuspidal representations of $\GL_n(\BF_q)$ and $\GL_m(\BF_q)$ respectively, and
$\chi$ and $\eta$ be the corresponding regular characters of $\BF_{q^n}^\times$ and of $\BF_{q^m}^\times$.
Then  
\begin{equation}\label{eq:conj-tensor-gauss}
\gamma(\pi\times\tau,\psi)
=c\cdot\chi(-1)^{m-1}\eta(-1)^{n-1}G(\chi\circ \Nr_{mn:n}\cdot \eta\circ \Nr_{mn:m},\psi),
\end{equation}
where    $$G(\beta,\psi):=\sum_{a\in\BF^{\times}_{q^{N}}}\beta(a)\psi(\Tr a^{-1})=\sum_{a\in\BF^{\times}_{q^{N}}}\beta^{-1}(a)\psi(\Tr a)$$ is the Gauss sum for a character $\beta^{-1}$ of $\BF_{q^{N}}^\times,$
  $\Tr$ is the reduced trace map from $\BF_{q^N}\mapsto \BF_q $ and $c=(-1)^{m(n-1)}q^{-mn+\frac{m^2+m}{2}}.$ 
\end{conj}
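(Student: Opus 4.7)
The plan is to derive Conjecture~\ref{conj:gamma} from the known $m=1$ case of \eqref{eq:conj-tensor-gauss} by combining three ingredients: (i) multiplicativity of finite-field gamma factors along parabolic induction, (ii) the Hasse--Davenport product relation for Gauss sums, and (iii) a direct Bessel-function computation that links $\gamma(\pi\times\tau,\psi)$ with $\gamma(\Pi\times 1,\psi)$ for the expected tensor lift $\Pi$ on $\GL_{mn}(\BF_q)$.

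Following the Langlands-functoriality heuristic spelled out before the statement, I would first identify $\Pi$ as the unique generic subrepresentation of $\Ind_{\GL_{n_1}\times\cdots\times\GL_{n_k}}^{\GL_{mn}}\pi_1\times\cdots\times\pi_k$, where the cuspidal constituents $\pi_i$ of $\GL_{n_i}(\BF_q)$ correspond, via Green's parametrization, to the regular characters obtained by decomposing the (possibly non-regular) character $\beta:=\chi\circ\Nr_{mn:n}\cdot\eta\circ\Nr_{mn:m}$ of $\BF_{q^{mn}}^{\times}$ into its distinct $\Gal(\BF_{q^{mn}}/\BF_q)$-orbits; the sizes $n_i$ are governed by the smallest subfield through which each chosen orbit representative factors and can be computed from $\gcd(m,n)$ together with the conductors of $\chi$ and $\eta$. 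Next, by multiplicativity one obtains $\gamma(\Pi\times 1,\psi)=\prod_i\gamma(\pi_i\times 1,\psi)$, and the established $n\times 1$ case of \eqref{eq:conj-tensor-gauss} writes each factor as a Gauss sum on $\BF_{q^{n_i}}^{\times}$. The Hasse--Davenport product relation, together with its generalization for characters pulled back via a norm, should then collapse the resulting product of Gauss sums into the single Gauss sum $G(\beta,\psi)$ on $\BF_{q^{mn}}^{\times}$, up to a predictable power of $q$ and a sign controlled by $\chi(-1),\eta(-1),m$ and $n$. A careful bookkeeping of these factors and comparison with the prefactor $c\cdot\chi(-1)^{m-1}\eta(-1)^{n-1}$ in \eqref{eq:conj-tensor-gauss} would then close the derivation, provided the normalization identity $\gamma(\Pi\times 1,\psi)=q^{(2-m^2-m)/2}\gamma(\pi\times\tau,\psi)$ asserted in the excerpt is also established.

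The main obstacle is precisely this last identity: it is the finite-field shadow of Langlands tensor lifting at the level of gamma factors, and is not obvious from the definitions. A feasible route that bypasses functoriality is to proceed directly: insert the explicit Bessel-function formula for cuspidal $\pi$ and $\tau$ (in the style of Curtis--Shinoda and of the Bessel-function analysis underlying Theorem~\ref{finiteJ}) into the Rankin--Selberg sum defining $\gamma(\pi\times\tau,\psi)$, interchange the order of summation, and recognize the resulting character sum on a subset of $\GL_n(\BF_q)$ as a Gauss sum on $\BF_{q^{mn}}^{\times}$ through Hasse--Davenport. The delicate combinatorics appear when $\gcd(m,n)>1$, where $\beta$ has short Galois orbits and the induced data has many cuspidal constituents, so that tracking the contribution of each orbit and the cumulative powers of $q$ is where the real difficulty lies. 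As a preliminary sanity check before attacking the general statement, I would first verify the formula by direct calculation in small cases such as $(m,n)\in\{(2,3),(2,5),(3,4)\}$, which already test the Hasse--Davenport bookkeeping nontrivially while keeping the orbit combinatorics manageable.
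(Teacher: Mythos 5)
This statement is a conjecture in the paper, not a theorem: the authors offer no proof of \eqref{eq:conj-tensor-gauss} beyond the case $m=1$ (which is Theorem \ref{gauss sum}, due to [Ni17]), and they only remark that for $m\mid n$ the formula together with multiplicativity would \emph{imply} the Hasse--Davenport relation. Your proposal does not close this gap; it reproduces the paper's own heuristic and then stops exactly where the difficulty begins. The pivotal step in your first route is the normalization identity $\gamma(\Pi\times 1,\psi)=q^{(2-m^2-m)/2}\gamma(\pi\times\tau,\psi)$, which you correctly flag as unestablished --- but this identity is not a side technicality: it is the finite-field tensor-functoriality statement that carries essentially the whole content of the conjecture, so invoking it makes the argument circular. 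Moreover, your claim that Hasse--Davenport "should collapse" the product $\prod_i\gamma(\pi_i\times 1,\psi)$ of Gauss sums over the various fields $\BF_{q^{n_i}}$ into the single sum $G(\chi\circ\Nr_{mn:n}\cdot\eta\circ\Nr_{mn:m},\psi)$ over $\BF_{q^{mn}}$ is not backed by any known identity in the generality required: the Hasse--Davenport lifting relation applies to the norm pullback of a \emph{single} character, and the product relation to multiplicative translates of characters over the \emph{same} field, whereas here the constituents attached to the distinct Frobenius orbits (whose structure depends on $\gcd(m,n)$ and on possible degeneracies of $\chi\circ\Nr_{mn:n}\cdot\eta\circ\Nr_{mn:m}$) live over different extensions. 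Asserting the collapse is again just restating the conjecture.

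Your fallback route --- substituting Bessel functions into the Rankin--Selberg sum of Proposition \ref{614} and recognizing the result as a Gauss sum --- is only named, not executed. For $m=1$ this is precisely the computation of [Ni17], and it already requires Green's character formula together with delicate manipulation; for $m\ge 2$ no closed evaluation of $\CB_{\pi,\psi}$ at the relevant matrices $\left(\begin{smallmatrix}0&\RI_{n-m}\\ g&0\end{smallmatrix}\right)$ paired against $\CB_{\tau,\psi^{-1}}(g)$ over $\RU_m\backslash\GL_m(\BF_q)$ is provided, and the "delicate combinatorics" you defer when $\gcd(m,n)>1$ are exactly the open problem. Checking small cases such as $(m,n)=(2,3)$ is a reasonable sanity test, but as written the proposal contains no step that advances beyond the heuristic already stated in the paper, so it cannot be accepted as a proof of Conjecture \ref{conj:gamma}.
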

For instance, let $n=2m$ and $\chi_i$ be regular characters of $\BF_{q^{2m}}^\times$ where $i=1,\ 2$, and  $\pi_i$ be the cuspidal representations of $\GL_{2m}$ corresponding to $\chi_i$.
Presuming the validity of Conjecture \ref{conj:gamma}, one has 
$\gamma(\pi_1\times\tau,\psi)=\gamma(\pi_2\times\tau,\psi)$ is equivalent to $G(\chi_i\cdot\eta,\psi)^m=G(\chi_i\cdot\eta,\psi)^m$ for all cuspidal representations $\tau$ of $\GL_m(\BF_q)$,
where $\eta$ is the regular character associated to $\tau$.
Following our method in Section \ref{sec:main}, it is easy to check that if $G(\chi_1\cdot \eta,\psi)^m=G(\chi_2\cdot \eta,\psi)^m$ for all characters $\eta\in\hat{\BF}^\times_{q^m}$, then $\chi_1=\chi_2^{q^j},$
which implies the even case of Theorem \ref{finiteJ}.

The above conjecture interprets representation theoretic material in terms of number theoretic objects.
For the case $m=1$, this conjectural formula, to be recalled in Theorem \ref{gauss sum}, has been verified by 
the first named author in \cite{Ni17}.
 When $m$ divides $n$, Conjecture \ref{conj:gamma} and the multiplicativity of the gamma factors imply Hasse-Davenport relation for the Gauss sums.

Now, Question \ref{que:converse-finite} is reduced to the study of the twisted Gauss sums in 
Eq. \eqref{eq:conj-tensor-gauss}.
Gross-Koblitz's formula (to be introduced in section \ref{GK}) transforms the Gauss sums $G(\cdot,\psi)$ into more computable $p$-adic $\Gamma$-functions.
By using Gross-Koblitz's formula and Stickelberger's Theorem, we are able to track the values of $\eta$-twisted Gauss sums and obtain the following results for $\GL_n(\BF_p)$. 

\begin{thm}\label{thm:main-5} Let $p$ be a prime.
Suppose $\pi$ is an irreducible cuspidal representations of $\GL_n(\BF_p)$ and $n\leq 5$.
If $\gamma(\pi\times\eta,\psi)=\gamma(\tau\times\eta,\psi)$ for all characters $\eta$ of $\BF_p^\times$, then
$\pi\cong \tau$.
\end{thm}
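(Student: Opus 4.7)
The idea is to convert the hypothesis into a family of Gauss-sum equalities, analyze them via Gross--Koblitz and Stickelberger to obtain $p$-adic digit-sum constraints on the characters attached to $\pi$ and $\tau$, and finish by a combinatorial enumeration that is just small enough to handle when $n\le 5$. Let $\chi$ and $\chi'$ be the regular characters of $\BF_{p^n}^{\times}$ attached to $\pi$ and $\tau$ respectively; we must show they lie in a common Frobenius orbit, i.e.\ $\chi'=\chi^{p^{j}}$ for some $j\in\{0,\dots,n-1\}$.

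\textbf{Step 1 (Reduction to Gauss sums).} By the $m=1$ case of Conjecture \ref{conj:gamma}, established in \cite{Ni17} and recorded as Theorem \ref{gauss sum} below,
\[
\gamma(\pi\times\eta,\psi)=c\cdot\eta(-1)^{n-1}\,G\!\bigl(\chi\cdot(\eta\circ\Nr_{n:1}),\,\psi\bigr),
\]
and likewise for $\tau$; the constant $c$ depends only on $n$ and $p$. The hypothesis therefore collapses to the equalities
\[
G\!\bigl(\chi\cdot(\eta\circ\Nr_{n:1}),\psi\bigr)=G\!\bigl(\chi'\cdot(\eta\circ\Nr_{n:1}),\psi\bigr) \quad\text{for every } \eta\in\widehat{\BF_p^{\times}}.
\]

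\textbf{Step 2 (Gross--Koblitz).} Fix a generator $\omega$ of $\widehat{\BF_{p^n}^{\times}}$ and write $\chi=\omega^{a}$, $\chi'=\omega^{a'}$. The character $\eta\circ\Nr_{n:1}$ is $\omega^{bN}$ with $N=(p^n-1)/(p-1)$ and $b\in\ZZ/(p-1)\ZZ$. The Gross--Koblitz formula (Section \ref{GK}) yields
\[
G(\omega^{a+bN},\psi)=-\varpi^{s_{p}(a+bN)}\prod_{i=0}^{n-1}\Gamma_p\!\left(\left\langle\frac{p^{i}(a+bN)}{p^{n}-1}\right\rangle\right),
\]
where $\varpi^{p-1}=-p$, $\langle\cdot\rangle$ is the fractional part, and $s_{p}(\cdot)$ denotes the base-$p$ digit sum (read modulo $p^{n}-1$). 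Step 1 thus becomes $p-1$ identities among such products of $p$-adic $\Gamma$-values, one for each $b$.

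\textbf{Step 3 (Digit constraints).} Stickelberger's theorem gives $\ord_{p}G(\omega^{m},\psi)=s_{p}(m)/(p-1)$, so comparing $p$-adic valuations forces
\[
s_{p}(a+bN\bmod p^{n}-1)=s_{p}(a'+bN\bmod p^{n}-1),\qquad b=0,1,\dots,p-2,
\]
while the unit parts of Step 2 give additional multiplicative relations among $\Gamma_p$-values. A Frobenius twist $\omega^{a}\mapsto\omega^{pa}$ cyclically shifts the base-$p$ digit string $(a_{0},\dots,a_{n-1})$ and preserves every one of these invariants. The task is to prove the converse.

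\textbf{Step 4 (Case analysis for $n\le 5$).} For $n=2$, regularity forces $a_{0}\ne a_{1}$ and a single non-trivial twist already separates the two Frobenius orbits. For $n=3,4$, the digit-sum identities pin down the multiset $\{a_{0},\dots,a_{n-1}\}$, and the reflection relation $\Gamma_{p}(x)\Gamma_{p}(1-x)=\pm 1$ combined with regularity (no nontrivial cyclic stabilizer) eliminates the non-shift candidates. The main obstacle is $n=5$: one must enumerate all length-$5$ digit strings realizing the prescribed vector $(s_{p}(a+bN))_{b}$ and use the multiplicative $\Gamma_{p}$-identities to rule out every non-shift. This enumeration must be carried out uniformly in $p$, and it is precisely here that the approach is tightest --- consistent with the fact, announced in the abstract, that the same strategy first breaks at $n=6$, where explicit counterexamples exist.
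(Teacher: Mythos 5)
Your Steps 1--3 coincide with the paper's setup: the reduction via Theorem \ref{gauss sum} to equalities of twisted Gauss sums, then Gross--Koblitz and Stickelberger to extract digit-sum and unit-part constraints, with the observation that a Frobenius twist cyclically shifts the base-$p$ digit string. The gap is Step 4, which is where all the content of the theorem lives and which you state as a plan rather than carry out. Two concrete problems. First, your claim that the digit-sum identities ``pin down the multiset'' of digits is itself a nontrivial assertion: in the paper this is Lemma \ref{lm:multi-set-6}, whose proof requires the formula \eqref{eq:alpha+p-a} for $s(\alpha+\widehat{p-a})$ (proved via the directed graphs $\CG_a(\alpha)$) together with the Stickelberger unit-part congruence $t(\alpha)\equiv t(\beta)\bmod p$, and it is established only for $n\le 5$ --- consistent with the $n=6$ counterexamples. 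Second, eliminating the non-shift candidates cannot be done with the reflection identity $\Gamma_p(x)\Gamma_p(1-x)=\pm 1$; that identity only relates $\alpha$ with $-\alpha$ (it is what underlies Lemma \ref{symm}) and cannot separate, for example, the exponents with digit strings $(1,1,0,0)$ and $(1,0,1,0)$ for $n=4$, whose twisted digit sums and digit multisets all agree. The paper separates such pairs using the finer congruences $V_m(\alpha)\equiv V_m(\beta)\bmod p^{m+1}$ of Lemma \ref{Usame}, i.e.\ the unit parts of Gross--Koblitz read modulo higher powers of $p$, together with the max/mini-consecutive block analysis of Lemma \ref{lm:consecutive}.

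With those tools in hand, the actual proof is a genuinely case-by-case enumeration, organized by the multiplicity data $r(\alpha)$ and $m(\alpha)$ and carried out uniformly in $p$ in Propositions \ref{n=4} and \ref{n=5} (the cases $n\le 3$ already follow from Theorem \ref{finiteJ}, since $[n/2]=1$ there). Your sentence ``one must enumerate all length-$5$ digit strings realizing the prescribed vector and rule out every non-shift'' is precisely the missing argument: nothing in your outline explains how that enumeration becomes finite and uniform in $p$, which is exactly what the graph formula, the consecutive-block lemmas, and the $V_m$ congruences are designed to achieve. As it stands, the proposal is a correct strategy sketch of the paper's approach, but not a proof.
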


Theorem \ref{thm:main-5} also holds for the  level zero supercuspidal representations by passing through     the relation, to be built in Theorem \ref{level0}, between the twisted gamma factors for a pair of  level zero supercuspidal representations $\pi\otimes\tau$ of $\GL_n(\CF)\times \GL_t(\CF)$ and the twisted gamma factors for a pair of  cuspidal representations $\pi_0\otimes\tau_0$ of $\GL_n(\BF_p)\times\GL_t(\BF_p)$, where $\pi$ (resp. $\tau$) is compactly induced from the inflation of $\pi_0$ (resp. $\tau_0)$. 

Furthermore, Proposition \ref{614} shows that the $\GL_1(\BF_q)$-twisted gamma factors can be characterized by the Mellin transforms of the Bessel functions $\CB_{\pi, \psi},$ defined in Proposition \ref{defnB}.
If those types of cuspidal representations are uniquely determined by their  $\GL_1(\BF_q)$-gamma factors,
the restrictions of $\CB_{\pi, \psi}$ into $\GL_1(\BF_q)$-(embedded in the left lower corner) are also unique. 
By Theorems \ref{thm:main-5} and \ref{614}, the following uniqueness property in terms of restricted values of Bessel functions holds. 
\begin{cor}\label{cor:Bessel-unique}
Suppose that $\pi$ and $\tau$ are irreducible cuspidal representations of $\GL_n(\BF_p)$ and $n\leq 5$.
If $$\CB_{\pi,\psi}(\begin{pmatrix}0&\RI_{n-1}\\
a&0\end{pmatrix})=\CB_{\tau,\psi}(\begin{pmatrix}0&\RI_{n-1}\\
a&0\end{pmatrix}),\text{ for all } a\in\BF_p^\times,$$ then
$\pi\cong \tau$.
\end{cor}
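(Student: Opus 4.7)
The plan is to deduce this corollary directly from the two named results, without introducing any new machinery. Given Proposition~\ref{614}, the $\GL_1(\BF_p)$-twisted gamma factor $\gamma(\pi\times\eta,\psi)$ is (up to a normalizing constant independent of $\pi$) the Mellin transform in $\eta$ of the function
\[
a \longmapsto \CB_{\pi,\psi}\!\begin{pmatrix} 0 & \RI_{n-1} \\ a & 0 \end{pmatrix},\qquad a\in\BF_p^\times.
\]
This Mellin transform is a linear map from the finite-dimensional space of functions on $\BF_p^\times$ to the finite-dimensional space of functions on $\widehat{\BF_p^\times}$, and by the orthogonality of characters it is invertible (the inverse being Mellin inversion on the cyclic group $\BF_p^\times$).

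First I would set up the Mellin pair explicitly: assuming (as provided by Proposition~\ref{614}) a relation of the schematic form
\[
\gamma(\pi\times\eta,\psi) \;=\; c_\eta \sum_{a\in\BF_p^\times} \eta(a)\,\CB_{\pi,\psi}\!\begin{pmatrix} 0 & \RI_{n-1} \\ a & 0 \end{pmatrix},
\]
with constants $c_\eta$ depending only on $\eta$, $\psi$ and $n$ (not on $\pi$), the hypothesis on the Bessel values forces $\gamma(\pi\times\eta,\psi)=\gamma(\tau\times\eta,\psi)$ for every character $\eta$ of $\BF_p^\times$. Conversely one could invert: if all twisted gamma factors agree, Mellin inversion recovers the Bessel values on these Weyl-element translates, giving the strict equivalence of the two conditions.

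Then I would invoke Theorem~\ref{thm:main-5}, which applies precisely because $n\leq 5$: the equality of all $\GL_1(\BF_p)$-twisted gamma factors implies $\pi\cong\tau$. This completes the argument.

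There is no real obstacle beyond bookkeeping: the substantive content has already been established as Theorem~\ref{thm:main-5} and Proposition~\ref{614}. The only point requiring care is to verify that the normalization in Proposition~\ref{614} is independent of $\pi$ (so that the identification of Bessel values with gamma factors passes through cleanly for the pair $(\pi,\tau)$), and that the central character hypothesis implicit in Theorem~\ref{finiteJ}--style arguments is automatic here because the Bessel functions $\CB_{\pi,\psi}$ and $\CB_{\tau,\psi}$ already encode the central characters through their values at $\begin{pmatrix} 0 & \RI_{n-1} \\ a & 0 \end{pmatrix}$ for varying $a$.
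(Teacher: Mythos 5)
Your proposal is correct and is essentially the paper's own argument: Proposition~\ref{614} with $r=1$ (where $\CB_{\eta,\psi^{-1}}(a)=\eta(a)$, so in fact no extra normalizing constant appears) exhibits each $\gamma(\pi\times\eta,\psi)$ as $\sum_{a\in\BF_p^\times}\CB_{\pi,\psi}\bigl(\begin{smallmatrix}0&\RI_{n-1}\\ a&0\end{smallmatrix}\bigr)\eta(a)$, so equality of the Bessel values forces equality of all $\GL_1(\BF_p)$-twisted gamma factors, and Theorem~\ref{thm:main-5} then gives $\pi\cong\tau$. The Mellin-inversion (converse) direction and the central-character remark are not needed for this implication, but they do no harm.
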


In addition, we apply this method to investigate certain cases in Sections \ref{sec:example-2} and \ref{sec:example-3} for some particular primes $p$ and higher ranks.
In the  inspiration of Arthur's work in \cite{Ar}, we introduce a {\bf primitive form} of cuspidal representations to formulate our conjecture for a possible answer to Question \ref{que:converse-finite}.
Let $r$ be a prime divisor of $n$. 
For each regular character $\chi$ of $\BF_{q^n}^\times$, denote by $\pi_\chi$ a cuspidal representation of $\GL_{r}(\BF_{q^{n/r}})$ corresponding to $\chi$, which is called a {\bf primitive representation} associated to  $\chi$.
\begin{conj}\label{conj:gamma-min-prime}\label{conj1} 
For $i=1,\ 2$, let $\chi_i$ be a regular character of $\BF_{q^n}^\times$
and $r$ be a prime divisor of $n$.
If $\gamma(\pi_{\chi_1}\times\eta,\psi)=\gamma(\pi_{\chi_2}\times\eta,\psi)$ for all characters $\eta$ of $\BF_{q^{n/r}}^\times$.
Then $\pi_{\chi_1}\cong \pi_{\chi_2}$.	
\end{conj}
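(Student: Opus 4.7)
My plan is to combine the conjectural Gauss-sum formula from Conjecture \ref{conj:gamma} with the Gross--Koblitz formula and Stickelberger's theorem, pushing the techniques used in Theorem \ref{thm:main-5} from the base field $\BF_p$ to a general intermediate subfield $\BF_Q$.

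Set $Q:=q^{n/r}$, so that $\pi_{\chi_i}$ is a cuspidal representation of $\GL_r(\BF_Q)$ corresponding to the regular character $\chi_i$ of $\BF_{Q^r}^\times=\BF_{q^n}^\times$. Specializing Conjecture \ref{conj:gamma} to the base field $\BF_Q$ with $(n,m)=(r,1)$, the factor $\chi_i(-1)^{m-1}=1$ and the leading constants are independent of $i$, so the hypothesis reduces to the equality of abelian Gauss sums
\[
G(\chi_1\cdot\tilde\eta,\psi)=G(\chi_2\cdot\tilde\eta,\psi)
\]
for every character $\tilde\eta=\eta\circ\Nr_{\BF_{Q^r}/\BF_Q}$ pulled back from $\widehat{\BF_Q^\times}$. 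Since $\pi_{\chi_1}\cong\pi_{\chi_2}$ is equivalent to $\chi_1=\chi_2^{Q^j}$ for some $0\le j<r$, the problem becomes a purely number-theoretic question about when a family of $Q-1$ Gauss-sum identities pins down the $Q$-Frobenius orbit of a regular character.

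Next, fix the Teichm\"uller character $\omega$ of $\BF_{Q^r}^\times$ and write $\chi_i=\omega^{-a_i}$ with $a_i\in\ZZ/(Q^r-1)\ZZ$ and $\tilde\eta=\omega^{-b\nu}$ with $\nu:=(Q^r-1)/(Q-1)$, $b\in\ZZ/(Q-1)\ZZ$. Gross--Koblitz writes each side as a prime-power times a product of values of the $p$-adic Gamma function $\Gamma_p$ evaluated at the fractional parts $\langle p^j c/(p^f-1)\rangle$, where $f=[\BF_{Q^r}:\BF_p]$. Matching $p$-adic valuations via Stickelberger yields
\[
s_p(a_1+b\nu)=s_p(a_2+b\nu)\qquad\text{for every } b\in\ZZ/(Q-1)\ZZ,
\]
where $s_p$ denotes the base-$p$ digit-sum, and the remaining product of $\Gamma_p$-values is forced to equal $1$. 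Applying the $p$-adic reflection identity $\Gamma_p(x)\Gamma_p(1-x)=\pm 1$ and the distribution relation then converts the $\Gamma_p$-equality into a finite system of additive constraints among the fractional parts of $p^j(a_i+b\nu)/(p^f-1)$.

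The main obstacle is to translate this system into the orbit condition $a_1\equiv Q^j a_2\pmod{Q^r-1}$ for some $j$. Because $r$ is prime and the $\chi_i$ are primitive, the $Q$-orbits in $\ZZ/(Q^r-1)\ZZ$ meeting primitive exponents all have full length $r$, giving a clean counting framework; nonetheless, controlling how the digit-sums $s_p(a_i+b\nu)$ vary as $b$ ranges over $\ZZ/(Q-1)\ZZ$ is genuinely subtle, and we expect essentially new input is required beyond the $\BF_p$ case. A natural attack is induction on $r$: first show the family of $\Gamma_p$ identities for $b\neq 0$ determines $a_i$ modulo $\nu$, and then extract the remaining ``top digit'' from a single carefully chosen value of $b$. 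For small $r$ (such as the cases already handled for $\BF_p$) direct case-by-case digit analysis should suffice and would provide the base of the induction, matching the examples explored in Sections \ref{sec:example-2} and \ref{sec:example-3}.
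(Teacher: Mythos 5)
You are addressing a statement that is a conjecture (Conjecture \ref{conj1}): the paper itself does not prove it, and only verifies special cases --- $n\le 5$ over prime fields (Theorem \ref{thm:main-5}, via Propositions \ref{n=4} and \ref{n=5}), the Mersenne-prime case $q=2$ (Proposition \ref{prop:2-Mersenne}), and, in the appendix, the range $n<\frac{q-1}{2\sqrt q}+1$ by Kloosterman-sheaf methods. Your proposal is likewise not a proof, as you yourself concede. Its sound part is the reduction: since the twists are by characters of $\GL_1(\BF_Q)$ with $Q=q^{n/r}$, you only need the $m=1$ formula, which is the \emph{proven} Theorem \ref{gauss sum} (invoking the unproven Conjecture \ref{conj:gamma} here is unnecessary), and the hypothesis is indeed equivalent to $G(\chi_1\cdot(\eta\circ\Nr_{r:1}),\psi)=G(\chi_2\cdot(\eta\circ\Nr_{r:1}),\psi)$ for all $\eta\in\widehat{\BF_Q^\times}$; this is exactly the paper's own reformulation, Conjecture \ref{conj:Gauss-sum}, over the base field $\BF_Q$ with prime degree $r$.

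The genuine gap is everything after that reduction. The paper's digit machinery (Lemmas \ref{sandt}, \ref{r1}, \ref{lm:multi-set-6}, \ref{lm:consecutive}, \ref{Usame}) relies on the twists $\hat k=k\frac{p^n-1}{p-1}$ shifting every base-$p$ digit uniformly, which is precisely why Section \ref{sec:main} is restricted to $q=p$ prime and $n\le5$. Over $\BF_Q$ with $Q$ a proper prime power, the twists $b\cdot\frac{Q^r-1}{Q-1}$ do not act uniformly on the $p$-adic digits of exponents modulo $p^f-1$ (where $f=[\BF_{Q^r}:\BF_p]$), only $Q-1$ twist values are available against a much richer digit structure, and none of the consecutive/multiset lemmas carry over; you acknowledge that \emph{essentially new input is required} but supply no mechanism for it. The proposed induction on $r$ has no content, since nothing relates the Gauss-sum statement for degree $r'$ over one field to that for degree $r$ over another; and your only use of the primality of $r$ (full-length Frobenius orbits) is not a substantive input, even though primality must enter essentially --- the counterexamples of Section \ref{sec:example-3} show the conclusion fails in non-primitive situations. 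If you want unconditional partial results, note instead that Theorem \ref{th:G} of the appendix, applied over the base field $\BF_Q$, already establishes the conjecture whenever $r<\frac{Q-1}{2\sqrt{Q}}+1$; this is a far stronger base than the small-rank cases you propose, and the genuinely open regime is exactly where $r$ is large compared to $\sqrt{Q}$, which your outline does not touch.
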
 

Note that in the above conjecture, we predict the validity of $n\times 1$  Local Converse Theorem  when $n$ is prime and $m=1$.
We rewrite it in terms of Gauss sums, as follows.
\begin{conj}\label{conj:Gauss-sum}
Let $\chi_1$ and $\chi_2$ be regular characters of $\BF_{q^n}^\times$ and $n$ be a prime. 
If $G(\chi_1\cdot\eta,\psi)=G(\chi_2\cdot\eta,\psi)$ for all $\eta\in\hat{\BF}^\times_q$,
then $\chi_1=\chi_2^{q^j}$ for some $j$.
\end{conj}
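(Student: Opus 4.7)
The plan is to translate the Gauss-sum hypothesis into a combinatorial problem on base-$p$ digit sequences via Stickelberger's congruence and the Gross-Koblitz formula, and then to exploit the primality of $n$ to force a Frobenius relation between $\chi_1$ and $\chi_2$. Concretely, fix the Teichm\"uller character $\omega$ of $\BF_{q^n}^\times$ and write $\chi_i=\omega^{-a_i}$ for $a_i\in\BZ/(q^n-1)\BZ$; regularity of $\chi_i$ means that the $q$-orbit $\{a_i,qa_i,\ldots,q^{n-1}a_i\}$ has size exactly $n$. A character $\eta=\omega^{-b}$ of $\BF_q^\times$, inflated to $\BF_{q^n}^\times$ via $\eta\circ\Nr$, corresponds to exponent $b\sigma$ with $\sigma:=(q^n-1)/(q-1)=1+q+\cdots+q^{n-1}$, so the hypothesis rewrites as
\[
G(\omega^{-(a_1+b\sigma)},\psi)=G(\omega^{-(a_2+b\sigma)},\psi)\qquad\text{for every } b\in\BZ/(q-1)\BZ,
\]
and the goal becomes $a_1\equiv q^{j}a_2\pmod{q^n-1}$ for some $j$.

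The first step is to compare $p$-adic valuations using Stickelberger, which forces the base-$p$ digit-sum identity $s_p(\langle a_1+b\sigma\rangle)=s_p(\langle a_2+b\sigma\rangle)$ for all $b$, where $\langle\cdot\rangle$ denotes the canonical representative in $\{0,1,\ldots,q^n-2\}$. Writing $q=p^f$ and applying Gross-Koblitz then converts the Gauss-sum equality into
\[
\prod_{i=0}^{fn-1}\Gamma_p\!\left(\left\langle\tfrac{p^{i}(a_1+b\sigma)}{q^n-1}\right\rangle\right)=\prod_{i=0}^{fn-1}\Gamma_p\!\left(\left\langle\tfrac{p^{i}(a_2+b\sigma)}{q^n-1}\right\rangle\right),
\]
again for every $b$. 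Note that replacing $a_i$ with $qa_i$ merely cyclically shifts the index $i$ by $f$ and leaves each product invariant, which is exactly why the expected conclusion is only up to a power of $q$.

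The substantive step is to show that these $q-1$ identities, taken jointly, pin down $a_1$ modulo the $q$-orbit of $a_2$. The shift $b\mapsto b+1$ adds $\sigma$ to the exponent, which acts on the length-$fn$ base-$p$ digit string by uniformly incrementing each of the $n$ length-$f$ blocks (modulo carries). I would track carry propagation across this family of shifts to extract independent linear constraints on the digits of $a_1-a_2$, aiming to pin them down up to a cyclic rotation by multiples of $f$. The primality of $n$ then enters decisively: any nontrivial common $q$-sub-orbit of $a_1$ and $a_2$ would have length $d\mid n$ with $d<n$, contradicting regularity.

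The main obstacle is precisely this rigidity step, since $\Gamma_p$ admits no closed multiplicative identities beyond the reflection $\Gamma_p(x)\Gamma_p(1-x)=\pm 1$ and the Gross-Koblitz relations themselves; one cannot simply invert the identity digit-by-digit. A plausible route is to induct on $n$, using Theorem \ref{thm:main-5} to handle small primes as base cases and descending higher $n$ either via Hasse-Davenport type relations, which the Gross-Koblitz framework is built to accommodate, or via the geometric input of Yun's appendix, whose Kloosterman-sheaf bound $n<(q-1)/(2\sqrt{q})+1$ suggests that rigidity becomes genuinely easier as $q$ grows relative to $n$.
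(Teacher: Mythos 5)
The statement you are proving is, in the paper, a \emph{conjecture}: it is not proved there in general. The authors only verify special cases, namely $n\le 5$ with $q=p$ prime (Theorem \ref{thm:main-5}, via the digit-analysis of Section \ref{sec:main} using Stickelberger and Gross--Koblitz, exactly the translation you set up), the case $q=2$ with $2^n-1$ a Mersenne prime (Proposition \ref{prop:2-Mersenne}, by a different argument: the prime-ideal factorization of $S(\omega^{-a})$ in the cyclotomic field), and, in Yun's appendix, the range $n<\frac{q-1}{2\sqrt q}+1$ (Theorem \ref{th:G}, by Kloosterman sheaves, purity and a Swan-conductor count). Your proposal reproduces the paper's framework for the first family of cases but does not close the general statement: the step you yourself flag as ``the substantive step'' --- extracting from the $q-1$ twisted identities enough digit/carry constraints to force $a_1\equiv q^j a_2$ --- is precisely the open content of the conjecture, and the routes you suggest for it do not work. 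Hasse--Davenport relates Gauss sums over a field and its extensions via norm-inflated characters; it gives no descent among \emph{prime} values of $n$, so there is no induction on prime $n$ with Theorem \ref{thm:main-5} as base case. Invoking Yun's bound also cannot fill the gap, because it requires $q$ large compared to $n$, whereas the conjecture's hard regime is fixed small $q$ and large prime $n$. Moreover the paper's digit method is explicitly restricted to $q=p$; your formal extension to $q=p^f$ (digit strings in blocks of length $f$) adds a further untreated carry-propagation problem.

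A secondary but genuine error is where you locate the role of primality. Regularity of $\chi_i$ already forces each Frobenius orbit to have full length $n$; the difficulty is never a ``nontrivial common sub-orbit of length $d\mid n$'' (which regularity rules out from the start), but rather that two \emph{distinct} full orbits could have identical twisted Gauss sums. Indeed the paper exhibits exactly such pairs for composite $n$ (Section \ref{sec:example-3}: $q=3$, $n=2t\ge 6$, characters of order $3^t+1$), which is the only reason the conjecture is restricted to prime $n$; and conversely the proved cases include the composite value $n=4$ (Proposition \ref{n=4}), so primality is not used in the arguments that exist --- it is a hypothesis designed to exclude known counterexamples, not a lever in the proof. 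As written, your proposal is a plausible plan for re-deriving the paper's partial results, but it contains no mechanism that would establish the conjecture itself.
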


In the current paper, we verify Conjecture \ref{conj:Gauss-sum} for the cases $n\le 5,\ r=1,\ q$ is a prime in  Theorem  \ref{thm:main-5} and the cases when $q=2$ and  $2^n-1$ is a Mersenne prime in Section \ref{sec:example-2}.
In Section \ref{sec:example-3}, we use the results of pure Gauss sum to 
give counter-examples to  $n\times 1$ Local Converse Theorem, when $q=3$ and $n\geq 6$ is even. In fact, these counter-examples motivate the formulation of the above conjectures.
 
After a primitive version of this paper is sent to Zhiwei Yun, he showed us a parallel statement in terms of Kloosterman sheaves and proved very general results.
\begin{thm}[Theorem \ref{th:G} in Appendix]\label{YunApp}
Let $n\ge1$ be an integer satisfying $n<\frac{q-1}{2\sqrt{q}}+1$. Let $\chi_{1}, \ \chi_{2}:\BF_{q^{n}}^{\times}\to \BC^{\times}$ two characters. Suppose that
\begin{equation*}
G(\chi_{1}\cdot \eta, \psi)=G(\chi_{2}\cdot \eta, \psi),
\end{equation*}
for any character $\eta:\BF^{\times}_{q}\to \BC^{\times}.$
Then $\chi_{1}$ and $\chi_{2}$ are in the same Frobenius orbit, i.e., there exists $j\in\BZ$ such that $\chi_{1}=\chi_{2}^{q^{j}}$. 
\end{thm}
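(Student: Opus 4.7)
The plan is to translate the Gauss-sum identity into an equality of trace functions of Kloosterman sheaves on $\Gm$ over $\BF_q$, and then to invoke Deligne's Weil II together with Katz's structure theory of Kloosterman sheaves to force a Frobenius relation between $\chi_1$ and $\chi_2$. For $a \in \BF_q^\times$, define
\[
f_\chi(a) := \sum_{x \in \BF_{q^n}^\times,\ \Nr_{n:1}(x)=a} \chi(x)\,\psi(\Tr(x)).
\]
A direct rearrangement yields $G(\chi\cdot\eta,\psi) = \sum_{a\in \BF_q^\times}\eta^{-1}(a)\,f_{\chi^{-1}}(a)$, so by orthogonality of the characters of $\BF_q^\times$, the hypothesis of the theorem is equivalent to $f_{\chi_1}\equiv f_{\chi_2}$ on $\BF_q^\times$ (after the harmless substitution $\chi_i\mapsto\chi_i^{-1}$).

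Next, the change of variables $x = uy$ with $u \in \ker(\Nr_{n:1})$, combined with the fact that regularity forces $\chi$ to be nontrivial on $\ker(\Nr_{n:1})$, gives the Plancherel identity $\sum_{a\in\BF_q^\times}|f_\chi(a)|^2 = q^n$; hence the hypothesis yields $\sum_{a\in\BF_q^\times} f_{\chi_1}(a)\overline{f_{\chi_2}(a)} = q^n$. Assume for contradiction that $\chi_1$ and $\chi_2$ lie in distinct Frobenius orbits. Then $\mu := \chi_1\chi_2^{-1}$ is a nontrivial character of $\BF_{q^n}^\times$, and the same substitution decomposes the inner product as
\[
\sum_{a\in\BF_q^\times} f_{\chi_1}(a)\overline{f_{\chi_2}(a)} \;=\; G(\mu^{-1},\psi)\cdot S, \qquad S := \sum_{u\in\ker(\Nr_{n:1}),\, u\neq 1} \chi_1(u)\,\mu^{-1}(u-1).
\]
Since $|G(\mu^{-1},\psi)| = q^{n/2}$, we conclude $|S| = q^{n/2}$.

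To contradict this equality I would pass to the Deligne--Katz realization of $f_\chi$ as the Frobenius-trace function of the Kloosterman $\ell$-adic sheaf $\Kl_n(\chi,\psi)$ on $\Gm$---a geometrically irreducible sheaf, lisse of rank $n$ and pure of weight $n-1$, whose geometric isomorphism class recovers $\chi$ exactly up to its Frobenius orbit. The identity $\sum_a f_{\chi_1}(a)\overline{f_{\chi_2}(a)} = q^n$ then translates, after a Tate twist, into $\Tr(\Frob_q\mid H^1_c(\Gm,\,\Kl_n(\chi_1,\psi)\otimes \Kl_n(\chi_2,\psi)^\vee)) = -q$. Under the assumption that $\chi_1,\chi_2$ lie in distinct orbits, the two Kloosterman sheaves are geometrically non-isomorphic; this rules out trivial geometric subquotients in the tensor product, forces $H^2_c$ to vanish, and bounds the pure weight-one part of $H^1_c$ linearly in $n$ via Katz's description of $\Kl_n(\chi,\psi)|_{I_\infty}$ as an induced representation of break $1/n$. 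Deligne's Weil II then yields $q \le 2(n-1)\cdot \sqrt{q}$, i.e., $\sqrt{q} \le 2(n-1)$, contradicting the assumption $n-1 < (q-1)/(2\sqrt{q}) < \sqrt{q}/2$.

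The main obstacle is the cohomological bound $2(n-1)$ on the pure weight-one part of $H^1_c(\Gm,\,\Kl_n(\chi_1,\psi)\otimes\Kl_n(\chi_2,\psi)^\vee)$. An elementary Weil estimate on the $(n-1)$-dimensional norm-one torus is too weak; the linear-in-$n$ constant rests on Katz's fine-grained monodromy theory of Kloosterman sheaves, in particular the structure of the wild local representation at $\infty$ of the tensor product and the resulting Swan-conductor computation. Once this cohomological input is in hand, the elementary arithmetic of the first two paragraphs immediately delivers the contradiction.
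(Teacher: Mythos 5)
Your strategy is, at its core, the same as the appendix's: Fourier inversion on $\BF_q^\times$ converts the hypothesis into an equality of the norm-fibered sums $f_{\chi_1}=f_{\chi_2}$; these are (up to the sign $(-1)^{n-1}$) the Frobenius trace functions of the descended Kloosterman sheaves $\cK_i$ on $\Gm$ over $\BF_q$; Katz's computation of the tame monodromy at $0$ shows that distinct Frobenius orbits force $\cK_1\not\cong\cK_2$ geometrically; then $\dim\cohoc{1}{\Gm\ot\kbar,\cK_1^\vee\ot\cK_2}=\Swan_\infty=n-1$ by Grothendieck--Ogg--Shafarevich, and Weil II bounds the trace by $(n-1)\sqrt q$, contradicting $n<\frac{q-1}{2\sqrt q}+1$. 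The one substantive difference is how the main term of size $q$ is produced: the paper compares $\sum_a\cK_1^\vee(a)\cK_2(a)$ with $\sum_a\cK_1^\vee(a)\cK_1(a)$ and evaluates the latter cohomologically via $\uEnd(\cK_1)\cong\Qlbar\oplus\uEnd^0(\cK_1)$ (the trivial summand gives $q-1$, the trace-free part a second $(n-1)\sqrt q$ error, whence the factor $2$ in $q-1\le 2(n-1)\sqrt q$), whereas you evaluate $\sum_a|f_{\chi_1}(a)|^2$ by an elementary Plancherel computation; when that works it actually yields the sharper $q\le(n-1)\sqrt q$ (your stated $2(n-1)\sqrt q$ appears to be imported from the two-term comparison, but it is harmless). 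The intermediate computation of $S$ and $|S|=q^{n/2}$ is never used afterwards and can be dropped.

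There is, however, a genuine gap in that elementary step. You justify $\sum_a|f_{\chi}(a)|^2=q^n$ by asserting that ``regularity forces $\chi$ to be nontrivial on $\ker(\Nr_{n:1})$'', but the theorem assumes no regularity: $\chi_1,\chi_2$ are arbitrary characters of $\BF_{q^n}^\times$. The correct identity is $\sum_a|f_{\chi}(a)|^2=q^n-\sum_{u\in\ker\Nm}\chi(u)$, which equals $q^n$ exactly when $\chi$ is nontrivial on the norm-one subgroup, i.e.\ when $\chi$ does not factor through $\Nr_{n:1}$; if $\chi_1=\eta_0\circ\Nr_{n:1}$ the sum is $q^n-\frac{q^n-1}{q-1}$ and your displayed trace value $-q$ is false as stated. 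The paper's $\uEnd^0(\cK_1)$ device avoids any such case distinction. Your gap is localized and fixable: since $\frac{q^n-1}{q-1}\le 2q^{n-1}$, the same cohomological bound still gives $q-2\le(n-1)\sqrt q$, which is incompatible with $2(n-1)\sqrt q<q-1$ unless $q=2$, in which case the hypothesis forces $n=1$ and there is nothing to prove---but this case analysis (or the observation that the deficient case can be handled separately) must be supplied, and the appeal to regularity cannot be.
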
	

More generally, referring to Remark \ref{rmk:appendix}, under the same assumption on $n$, Theorem \ref{th:G} holds for any character $\chi: A^{\times}\to \BC^{\times}$ of any \'etale $\BF_{q}$-algebra $A$ of  degree $n$. 
Such characters correspond to the generic representations $\sig_\chi$ of Levi subgroups of $\GL_n$.
Applying the multiplicativity of the gamma factors defined by Roditty, 
the Gauss sum $G_A(\chi,\psi)$ defined in Remark \ref{rmk:appendix} is the gamma factor of the generic subrepresentation of the induced representations from $\sig_\chi$.
Hence, it follows that Theorem \ref{thm:main-5} and Corollary \ref{cor:Bessel-unique} also hold for all irreducible generic representations when $n<\frac{q-1}{2\sqrt{q}}+1$.
Hence, the set of all irreducible generic representations is the desired one to
 answer Question \ref{que:converse-finite} when $n<\frac{q-1}{2\sqrt{q}}+1$.

{\it  Acknowledgment.}
We are grateful to Dihua Jiang for bringing this problem to our attention and many useful discussions. 
We would also like to thank David Soudry for his comments on the multiplicativity of gamma factors over the finite field case and for his other helpful comments.
We also appreciate the comments from David Soudry  about the multiplicativity of gamma factors over the finite field case.
Last, but not least, we would like to thank Zhiwei Yun for his interest in the first version of this manuscript,
which leads to his appendix revealing the geometric perspective on the related problems.


\section{Local Gamma factor and converse theorem}

\maketitle


 \subsection{Whittaker models}\label{2.1}
Let $F$ be either a finite field or a $p$-adic field.
In this section all groups are considered over $F$ unless state otherwise.
Let $\psi$ be a fixed nontrivial additive character of $F$. Let
$\RU_n$ be the unipotent radical of the standard Borel  subgroup
 $\RB_n$ of $\GL_n.$
Denote by $\RP_n$ the mirabolic subgroup of $\GL_n$, consisting of
matrices in $\GL_n$, whose last row equal to $(  0,\cdots, 0
,1)$. 
Let $\RZ_n$ denote the center of  $\GL_n$. 

\begin{defn}\label{nonde}
A character $\psi'$ of $\RU_n$ is called {\bf non-degenerate} if
 $$\psi' (u)=\psi(\sum_{i=1}^{n-1}a_iu_{i,i+1}),
  \text{ for }u=(u_{i,j})\in \RU_n,$$
  for some $a_i\in F^\times.$
Also, we denote by $\psi_n$, the  {\bf standard non-degenerate} character   given by
$$\psi_n (u)=\psi(\sum_{i=1}^{n-1} u_{i,i+1}),
  \text{ for }u=(u_{i,j})\in \RU_n.$$
\end{defn}

\begin{defn} Let $\pi$ be an irreducible representation of
 $\GL_n.$
Given a non-degenerate character $\psi'$ of $\RU_n,$ we call $\pi$
{\bf $\psi'$-generic} if $$\dim\Hom_{\GL_n}(\pi,
\Ind_{\RU_n}^{\GL_n}\psi')\ne 0.$$
\end{defn}
If $\pi$ is $\psi'$-generic, then it is also
$\psi''$-generic for any other non-degenerate character $\psi''$
of $\RU_n.$ Therefore, we will use the term "genericity"  instead of
"$\psi'$-genericity"
 from now on.
In the following, we recall some well known results about Whittaker models.

\begin{thm}
[Uniqueness of Whittaker model]\label{uni} Let $\pi$
be an irreducible representation of $\GL_n.$ Then
$$\dim\Hom_{\RU_n}(\pi|_{\RU_n}, \psi_n)=\dim\Hom_{\GL_n}(\pi, \Ind_{\RU_n}^{\GL_n}\psi_n)\le 1.$$
\end{thm}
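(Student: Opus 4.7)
The plan is to break the statement into two parts: the equality of the two Hom-spaces, and the upper bound of $1$. The first equality is a direct application of Frobenius reciprocity (smooth Frobenius reciprocity in the $p$-adic case): restriction and induction between $\RU_n$ and $\GL_n$ are adjoint, giving
\[
\Hom_{\GL_n}(\pi,\Ind_{\RU_n}^{\GL_n}\psi_n)\cong\Hom_{\RU_n}(\pi|_{\RU_n},\psi_n)
\]
without invoking any special property of $\pi$ or $\psi_n$.

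The substantive statement is the bound $\leq 1$, for which I would follow the classical Gelfand--Kazhdan strategy, which treats the finite and $p$-adic settings in a uniform way (with distributions replaced by functions in the finite-field case). First I would introduce the anti-involution $\sigma(g)=w_0\cdot{}^{t}\!g^{-1}\cdot w_0^{-1}$, where $w_0$ is the longest Weyl element of $\GL_n$ (the anti-diagonal permutation matrix), and verify the two compatibilities $\sigma(\RU_n)=\RU_n$ and $\psi_n\circ\sigma=\psi_n^{-1}$; these are short matrix calculations using the fact that $w_0\cdot{}^{t}\!\RU_n\cdot w_0^{-1}=\RU_n$ and tracking the superdiagonal entries.

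Next, and this is the core of the argument, I would prove the Gelfand--Kazhdan theorem in our context: every distribution (resp.\ function) $T$ on $\GL_n$ satisfying the bi-equivariance
\[
T(u_1 g u_2)=\psi_n(u_1)\psi_n(u_2)\,T(g),\qquad u_1,u_2\in\RU_n,
\]
is invariant under $\sigma$, i.e.\ $T\circ\sigma=T$. The proof decomposes $\GL_n$ via the Bruhat decomposition $\GL_n=\sqcup_{w\in W}\RU_n wT\RU_n$ (here $T$ is the diagonal torus) and analyzes each cell: the non-degeneracy of $\psi_n$ on the subgroup $\RU_n\cap w^{-1}\RU_n w$ forces $T$ to vanish on cells where the character is non-trivial on a root subgroup of the stabilizer, leaving only cells for which $\sigma$-invariance can be checked directly via a conjugation computation.

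Finally, I would deduce multiplicity one. The Gelfand--Kazhdan theorem, together with the fact that $\sigma$ is an anti-involution, implies that the Hecke algebra $\CH(\GL_n,\RU_n,\psi_n)$ of bi-$(\RU_n,\psi_n)$-equivariant distributions/functions is commutative (since $\sigma$ acts both as the identity and as algebra-anti-automorphism). Since $\Hom_{\RU_n}(\pi|_{\RU_n},\psi_n)$ is a module over this commutative algebra, and since $\pi$ is irreducible, a Schur-type argument — in the finite setting, the decomposition of $\Ind_{\RU_n}^{\GL_n}\psi_n$ into distinct irreducibles; in the $p$-adic setting, a matrix-coefficient argument — forces the dimension to be at most one. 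The main technical obstacle is the second step: the orbit-by-orbit analysis of $(\RU_n\times\RU_n,\psi_n\otimes\psi_n)$-equivariant distributions across the Bruhat stratification, particularly showing that $\sigma$ preserves the cells that survive the character condition.
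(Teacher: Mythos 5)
The paper itself gives no proof of Theorem \ref{uni}: it is recalled as a classical result (Gelfand--Graev over finite fields, Gelfand--Kazhdan over $p$-adic fields; cf.\ \cite{Ge70}, \cite{GK75}), so your outline has to be measured against the standard literature argument --- which is indeed the route you describe: Frobenius reciprocity for the equality of the two $\Hom$-spaces, and an involution/distribution argument over the Bruhat decomposition for the bound $\le 1$. The first step is fine in both the finite and the smooth $p$-adic setting.

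The core step, however, is set up with the wrong map, and as written it fails. Your $\sigma(g)=w_0\,{}^t g^{-1}\,w_0^{-1}$ is an automorphism, not an anti-involution (transpose and inversion are each anti-automorphisms, so their composite is an automorphism), and, as you yourself record, $\psi_n\circ\sigma=\psi_n^{-1}$. Consequently, for a distribution $T$ satisfying $T(u_1gu_2)=\psi_n(u_1)\psi_n(u_2)T(g)$, the transform $T\circ\sigma$ is bi-$(\RU_n,\psi_n^{-1})$-equivariant, so the asserted invariance $T\circ\sigma=T$ does not even preserve the space in question and is not the Gelfand--Kazhdan statement; for the same reason $f\mapsto f\circ\sigma$ is not an anti-automorphism of the Hecke algebra $\CH(\GL_n,\RU_n,\psi_n)$, so the commutativity conclusion does not follow. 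What the argument requires is the anti-involution $\tau(g)=w_0\,{}^t g\,w_0^{-1}$, which preserves $\RU_n$ and satisfies $\psi_n\circ\tau=\psi_n$; with $\tau$ the Bruhat-cell analysis and the Gelfand trick (commutative Hecke algebra, hence $\Ind_{\RU_n}^{\GL_n}\psi_n$ multiplicity-free) do go through in the finite-field case. The map you wrote, $g\mapsto{}^tg^{-1}$, enters the classical proof at a different point, namely in showing $\widetilde\pi\cong\pi\circ\theta$ with $\theta(g)={}^tg^{-1}$. In the $p$-adic case there is a second gap: bi-$(\RU_n,\psi_n)$-equivariant distributions cannot be convolved without support restrictions, so "commutativity of the Hecke algebra" is not available verbatim; the standard deduction instead uses the $\tau$-invariance of such distributions to bound $\dim\Hom_{\RU_n}(\pi|_{\RU_n},\psi_n)\cdot\dim\Hom_{\RU_n}(\widetilde\pi|_{\RU_n},\psi_n^{-1})$ by $1$, and then invokes $\widetilde\pi\cong\pi\circ\theta$ to identify the two factors, giving $\dim^2\le 1$. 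With the involution corrected and this last step repaired, your outline becomes the standard proof that the paper implicitly cites.
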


When $\pi$ is generic, the above $\Hom$-space is of  dimension one. Let $\ell_{\psi_n}\in \Hom_{\RU_n }(\pi|_{\RU_n}, \psi_n)$ be a
nonzero Whittaker functional of $\pi$. Then for $v\in V_\pi$, define $W_v(g):=\ell_{\psi_n}(\pi(g)v)$, which is
called the Whittaker function attached to the vector $v$ and belongs to the induced representation
$\Ind_{\RU_n}^{\GL_n}\psi_n$.
By Theorem \ref{uni}, the subspace generated by all Whittaker functions $W_v(g)$ is
unique and  will be denoted by $\CW(\pi, \psi_n)$. This space is called the {\bf Whittaker model} of $\pi$.
It is known that any irreducible (super)cuspidal representation of $\GL_n(F)$ is generic.

\subsection{Gamma factors over finite fields}\label{2.2}

In \cite{Ro10}, Roditty considered a finite-field-analogue of Tate's thesis for the local functional equation and defined the gamma factor for a character of $\BF_q^\times$ as the proportional factor.
For $n>1,$  the functional equations for irreducible
cuspidal representations of $\GL_n(\BF_q)$ and irreducible generic representation $\tau$
 of $\GL_t(\BF_q)$ with $n>t$ are established in the following Theorem.

\begin{thm}[{\cite[Theorems 5.1 and 5.4]{Ro10}} or {\cite[Theorem 2.10]{Ni14}}] \label{finite gamma}
Let $\pi$ be an irreducible
cuspidal representation of $\GL_n(\BF_q)$ and $\tau$ an
irreducible generic representation of $\GL_t(\BF_q)$, with $n>t$.
Then there exists a complex number $\gamma(\pi\times \tau,\psi)$
such that
  $$\gamma(\pi\times \tau,\psi)
q^{tk}\sum_{m\in\RU_t\backslash \GL_t(\BF_q)}
 \sum_{x\in \RM_{n-t-k-1,t}}W_\pi
(\left(\begin{smallmatrix}m&0&0\\
x&\RI_{n-t-k-1}&0\\
0&0&\RI_{k+1}\end{smallmatrix}\right)) W_\tau(m)$$
\begin{equation*}\label{gam}
=\sum_{m\in\RU_t\backslash \GL_t(\BF_q)}
\sum_{y\in \RM_{t, k}}  W_\pi
 (\left(\begin{smallmatrix}0&\I_{n-t-k}&0\\
0&0 &\RI_{k}\\
m&0&y \end{smallmatrix} \right))  W_\tau(m)
 ,\end{equation*}
 for all $0\le k\le n-t-1,$
$W_\pi\in \CW(\pi,\psi_n )\text{ and } W_\tau\in
\CW(\tau,\psi_t^{-1}), $
 where $$w_{n,t}= \left(\begin{smallmatrix}
 \RI_{t}& 0\\
 0&w_{n-t} \end{smallmatrix}\right).$$
\end{thm}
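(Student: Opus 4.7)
The plan is to interpret both sides of the claimed identity as values of two $\GL_{t}(\BF_{q})$-invariant bilinear forms on $\CW(\pi,\psi_{n})\times\CW(\tau,\psi_{t}^{-1})$, show that the space of such forms is at most one-dimensional, and then conclude that the two sides are proportional with a proportionality constant that is independent of $k$. This is the standard Rankin--Selberg blueprint adapted to the finite field case.

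First I would fix $k$ and define
\[
\Lambda^{+}_{k}(W_{\pi},W_{\tau}) := q^{tk}\sum_{m\in \RU_{t}\backslash\GL_{t}(\BF_{q})}\sum_{x\in \RM_{n-t-k-1,t}} W_{\pi}\!\left(\begin{smallmatrix} m&0&0\\ x&\RI_{n-t-k-1}&0\\ 0&0&\RI_{k+1}\end{smallmatrix}\right) W_{\tau}(m),
\]
and define $\Lambda^{-}_{k}(W_{\pi},W_{\tau})$ by the RHS of the claimed identity. The first task is to check that each $\Lambda^{\pm}_{k}$ satisfies the invariance property
\[
\Lambda^{\pm}_{k}(R_{\iota(g)}W_{\pi},R_{g}W_{\tau}) = \Lambda^{\pm}_{k}(W_{\pi},W_{\tau}),
\]
for all $g\in\GL_{t}(\BF_{q})$, where $\iota\colon\GL_{t}\hookrightarrow\GL_{n}$ is the appropriate block-diagonal embedding in the upper-left corner. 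This is a routine change of variables: $g$ acts on $m\in \RU_{t}\backslash\GL_{t}$ by right multiplication, and on the auxiliary summation variable $x$ (resp.\ $y$) by right multiplication on a finite $\GL_{t}$-module, which is a bijection.

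The crucial step is the one-dimensionality of the space
\[
\Hom_{\GL_{t}(\BF_{q})}\!\bigl(\pi|_{\GL_{t}}\otimes \tau, \mathbf{1}\bigr),
\]
when $\pi$ is cuspidal and $\tau$ is irreducible generic with $t<n$. I would establish this via Bernstein--Zelevinsky style filtrations of $\pi$ restricted along the chain $\GL_{n}\supset \RP_{n}\supset\cdots\supset\GL_{t}\ltimes V$, using that cuspidality of $\pi$ kills all Jacquet-like quotients along proper standard parabolics, so the only surviving piece of the restriction filtration contributes a single copy of a Whittaker-induced module whose Hom with $\tau$ is forced to be one-dimensional by the uniqueness of Whittaker models (Theorem \ref{uni}). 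Once this key input is in place, $\Lambda^{+}_{k}$ and $\Lambda^{-}_{k}$ lie in the same one-dimensional space, hence are proportional; call the ratio $\gamma_{k}$, and then verify that both forms are not identically zero (non-vanishing is established by plugging in a Whittaker function $W_{\pi}$ that is supported, modulo $\RU_{n}$, in a small neighborhood of the identity, which reduces the sum to a single nonzero term).

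It remains to verify that $\gamma_{k}$ is independent of $k$. The plan here is to show that the function $\Lambda^{+}_{k}$ does not depend on $k$ at all, and similarly for $\Lambda^{-}_{k}$, as long as $0\le k\le n-t-1$. For $\Lambda^{+}_{k}$ this is proved by a direct manipulation: pass from level $k$ to level $k+1$ by exploiting the non-degeneracy of $\psi_{n}$ along the root $\alpha_{n-t-k-1,n-t-k}$ and a summation of $\psi$-twists that reveals a geometric series, whose total contribution cancels the factor $q^{t}$ picked up when shifting the $x$-summation (this is the origin of the prefactor $q^{tk}$). The analogous argument for $\Lambda^{-}_{k}$ is symmetric. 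The main obstacle throughout is the proof of the one-dimensionality of the relevant $\GL_{t}$-invariant Hom space in the finite-field setting; this is where cuspidality of $\pi$ is essential and where one must carefully adapt Bernstein--Zelevinsky arguments to the finite-group context, as carried out by Roditty.
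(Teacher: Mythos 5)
The paper itself does not prove this theorem (it is quoted from Roditty's thesis and [Ni14]), and your blueprint — two equivariant bilinear forms, a multiplicity-one statement, proportionality, then independence of $k$ — is indeed the shape of the cited argument. However, your ``crucial step'' is false as stated, and this is a genuine gap. The space $\Hom_{\GL_t(\BF_q)}\bigl(\pi|_{\GL_t}\otimes\tau,\mathbf{1}\bigr)$ is not one-dimensional when $t<n-1$: already for $n=3$, $t=1$, a cuspidal $\pi$ has dimension $(q-1)(q^2-1)$ and its restriction to the corner $\GL_1$ contains every character with multiplicity of size roughly $q^{2}-1$, so no filtration argument can bring this Hom space down to dimension one. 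Plain diagonal $\GL_t$-invariance is far too weak to force $\Lambda^{+}_{k}$ and $\Lambda^{-}_{k}$ to be proportional. What actually makes the argument work is that your forms carry much more equivariance than you record: because $W_\pi(ug)=\psi_n(u)W_\pi(g)$ and the $x$- and $y$-sums run over full abelian blocks, $\Lambda^{\pm}_{k}$ are equivariant under a subgroup of the shape $\GL_t\ltimes N$ inside $\RP_n$ (with a Whittaker-type character on $N$ inherited from $\psi_n$), and it is the space of forms with \emph{that} equivariance which is at most one-dimensional. Its uniqueness is where cuspidality enters, via $\pi|_{\RP_n}\cong\Ind_{\RU_n}^{\RP_n}\psi_n$ together with the uniqueness of Whittaker functionals for $\tau$ (Theorem \ref{uni}); this is the content of Roditty's proof, and without it your proportionality step collapses.

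Your $k$-independence step is also incorrect as stated. Test it on Bessel vectors: take $n=3$, $t=1$, $W_\pi=\CB_{\pi,\psi}$, $W_\tau=\tau$. Since $\CB_{\pi,\psi}$ vanishes on $\RP_3\setminus\RU_3$ (cuspidality again), the $k=0$ form equals $1$ while the $k=1$ form, \emph{including} your prefactor $q^{tk}$, equals $q$; on the other side, writing $\left(\begin{smallmatrix}0&1&0\\0&0&1\\m&0&y\end{smallmatrix}\right)=\left(\begin{smallmatrix}0&1&0\\0&0&1\\m&0&0\end{smallmatrix}\right)\bigl(\RI_3+(y/m)E_{13}\bigr)$ and using $\CB(gu)=\psi_n(u)\CB(g)$ with $\psi_n$ trivial in the $(1,3)$ direction, one gets $\Lambda^{-}_{1}=q\,\Lambda^{-}_{0}$. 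So in general $\Lambda^{\pm}_{k+1}=q^{t}\Lambda^{\pm}_{k}$ rather than $\Lambda^{\pm}_{k+1}=\Lambda^{\pm}_{k}$; the correct statement to prove is that the two sides scale identically in $k$ (which is exactly what the normalization $q^{tk}$ is calibrated against, and is again proved using cuspidality and the $\psi_n$-equivariance). This second point is a fixable miscalibration, but the first point — identifying and proving the right multiplicity-one statement — is the heart of the theorem and is missing from your proposal.
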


\begin{prop}[{\cite[Proposition 4.5]{Ge70}} or
{\cite[Lemma 6.1.1]{Ro10}}]\label{defnB}
Let $\pi$ be an irreducible generic representation of $\GL_n(\BF_q)$
and $\chi_\pi$ its character. Define
$$\CB(g)= {| \RU_n(\BF_q) |}^{-1}\sum_{u\in \RU_n}\psi_n(u^{-1})\chi_{\pi}(gu), \text{ for }g\in \GL_n(\BF_q).$$
Then $ \CB$ satisfies the following  
conditions:
\begin{enumerate}
\item $\CB\in \CW(\pi, \psi_n);$
\item $ \CB(u_1gu_2)=\psi_n(u_1u_2)\CB, \text{ for all }g\in \GL_n(\BF_q), \ u_1,\ u_2\in \RU_n;$
\item $\CB(\RI_n)=1.$
 \end{enumerate}
\end{prop}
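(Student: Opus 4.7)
The plan is to realize $\CB$ as the Whittaker function attached to a specific normalized vector, obtained from the Whittaker projection operator on $V_\pi$. Concretely, I would introduce
\[
P \;=\; \frac{1}{|\RU_n(\BF_q)|}\sum_{u\in\RU_n}\psi_n(u^{-1})\,\pi(u)\;\in\;\End(V_\pi),
\]
and verify the standard facts that $P$ is idempotent and that its image is precisely the $(\RU_n,\psi_n)$-isotypic subspace of $V_\pi$; a short computation shows $\pi(u)P=\psi_n(u)P=P\pi(u)$ for every $u\in\RU_n$, from which both properties follow. Genericity of $\pi$ combined with the uniqueness of Whittaker models (Theorem \ref{uni}) then forces this image to be one-dimensional, so $P$ has rank one. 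Fixing a basis vector $v_{0}$ of $\Im(P)$ and writing $P(w)=\ell(w)\,v_{0}$, the resulting linear functional $\ell$ is a Whittaker functional (by the intertwining relation above), and idempotence $P^{2}=P$ automatically forces the normalization $\ell(v_{0})=1$.

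The key identification then comes from computing $\tr(\pi(g)P)$ in two ways. Directly from the definition,
\[
\tr(\pi(g)P)\;=\;\frac{1}{|\RU_n|}\sum_{u}\psi_n(u^{-1})\,\chi_\pi(gu)\;=\;\CB(g),
\]
while on the other hand, since $P=v_{0}\otimes\ell$ has rank one,
\[
\tr(\pi(g)P)\;=\;\ell(\pi(g)v_{0})\;=\;W_{v_{0}}(g).
\]
Thus $\CB=W_{v_{0}}\in\CW(\pi,\psi_n)$, giving (1), and $\CB(\RI_n)=\ell(v_{0})=1$ gives (3). Property (2) is then immediate from the two equivariances $\pi(u_{2})v_{0}=\psi_n(u_{2})v_{0}$ and $\ell(\pi(u_{1})w)=\psi_n(u_{1})\ell(w)$:
\[
\CB(u_{1}gu_{2})\;=\;\ell(\pi(u_{1})\pi(g)\pi(u_{2})v_{0})\;=\;\psi_n(u_{1}u_{2})\,\CB(g).
\]

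I do not anticipate any serious obstacle. The only non-formal ingredient is the one-dimensionality of the Whittaker space, which is exactly the content of Theorem \ref{uni}; once that is granted, everything else is bookkeeping with the projector $P$, and the normalization $\CB(\RI_n)=1$ drops out of $P^{2}=P$ at no additional cost.
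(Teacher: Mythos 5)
Your argument is correct, and it is essentially the standard proof of this result (the paper itself only cites \cite{Ge70} and \cite{Ro10} rather than proving it): one averages $\psi_n^{-1}\pi$ over $\RU_n$ to get the idempotent projector onto the $(\RU_n,\psi_n)$-isotypic line, which is one-dimensional by genericity and Theorem \ref{uni}, and then computes $\tr(\pi(g)P)$ in two ways to identify $\CB$ with the Whittaker function $W_{v_0}$, with the normalization $\CB(\RI_n)=1$ coming from $Pv_0=v_0$. No gaps; this is the same projection-and-trace computation as in the cited sources.
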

We call $\CB$ in Theorem \ref{defnB} the Bessel function of $\pi$ with respect to $\psi$ and denote it by 
$\CB_{\pi, \psi}.$
Then one gains the following nice expression of twisted gamma factors in terms of their Bessel functions.
\begin{prop}[{\cite[Lemma 6.1.4]{Ro10}}]\label{614}
Let $\pi$ be an irreducible cuspidal representation of
$\GL_n(\BF_q)  $ and $\tau$ be an irreducible generic representation of $\GL_r(\BF_q), \ r <n.$ Then \begin{equation*}\label{Bandg}\gamma(\pi\times \tau,\ \psi)=  \sum_{\RU_r\backslash \GL_r(\BF_q)}\CB_{\pi,  \psi}\begin{pmatrix}0&\RI_{n-r}\\
m& 0\end{pmatrix}\CB_{\tau,  \psi^{-1}}(m).
\end{equation*}
\end{prop}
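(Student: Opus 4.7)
The plan is to specialize the functional equation of Theorem~\ref{finite gamma} (taking $t=r$ and $k=0$) to the choice of Whittaker functions $W_{\pi}=\CB_{\pi,\psi}$ and $W_{\tau}=\CB_{\tau,\psi^{-1}}$. With $k=0$ the auxiliary sum over $y\in\RM_{r,0}$ on the right-hand side of the functional equation is trivially empty, and that side collapses to
\[
\sum_{m\in\RU_{r}\backslash\GL_{r}(\BF_{q})}\CB_{\pi,\psi}\begin{pmatrix}0&\I_{n-r}\\ m&0\end{pmatrix}\CB_{\tau,\psi^{-1}}(m),
\]
which is exactly the right-hand side of the claimed formula. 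Consequently the proposition reduces to verifying that the left-hand side of the functional equation specializes to $\gamma(\pi\times\tau,\psi)$, i.e.\ that
\[
S:=\sum_{m\in\RU_{r}\backslash\GL_{r}(\BF_{q})}\sum_{x\in\RM_{n-r-1,r}}\CB_{\pi,\psi}\begin{pmatrix}m&0&0\\ x&\I_{n-r-1}&0\\ 0&0&1\end{pmatrix}\CB_{\tau,\psi^{-1}}(m)=1.
\]

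To evaluate $S$, I would factor the matrix inside $\CB_{\pi,\psi}$ as
\[
\begin{pmatrix}m&0&0\\ x&\I_{n-r-1}&0\\ 0&0&1\end{pmatrix}=\begin{pmatrix}\I_{r}&0&0\\ xm^{-1}&\I_{n-r-1}&0\\ 0&0&1\end{pmatrix}\begin{pmatrix}m&0\\0&\I_{n-r}\end{pmatrix},
\]
and change variable $y=xm^{-1}$. Expanding $\CB_{\pi,\psi}$ via the character-sum formula of Proposition~\ref{defnB}, the combined summation over $y\in\RM_{n-r-1,r}$ and over $\RU_{n}$ rearranges into an orthogonality relation for characters of a subgroup of the opposite unipotent radical. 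Cuspidality of $\pi$ (equivalently, the fact that $\chi_{\pi}$ vanishes on non-elliptic parabolic induction classes, or equivalently that the Kirillov model of $\pi$ fills all of $\Ind_{\RU_{n}}^{\RP_{n}}\psi_{n}$) collapses the support onto the coset where $m\in\RU_{r}$ and $x=0$. The normalizations $\CB_{\pi,\psi}(\I_{n})=1$ and $\CB_{\tau,\psi^{-1}}(\I_{r})=1$ then give $S=1$ directly.

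The main obstacle is the precise character-sum cancellation in the last step: one must combine the $y$-sum with the defining character sum of $\CB_{\pi,\psi}$ and use the vanishing properties of $\chi_{\pi}$ coming from cuspidality in order to eliminate every stratum except the identity coset. This is essentially a finite-field Kirillov-model/derivative computation specific to cuspidal representations of $\GL_{n}(\BF_{q})$, and it is the only place in the argument where the hypothesis that $\pi$ is cuspidal (rather than merely generic) enters.
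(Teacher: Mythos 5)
Your derivation is the standard one and it does prove the proposition; note that the paper itself contains no proof of this statement (it is quoted from \cite[Lemma 6.1.4]{Ro10}), and Roditty's argument is exactly the specialization you describe: take $t=r$, $k=0$ in Theorem \ref{finite gamma}, plug in $W_\pi=\CB_{\pi,\psi}$, $W_\tau=\CB_{\tau,\psi^{-1}}$ (with $k=0$ the $y$-sum runs over the one-element set $\RM_{r,0}$, so that side is a single term, not an empty sum), and then show the sum $S$ on the other side equals $1$. The only thin spot is your justification of $S=1$. All matrices occurring there lie in the mirabolic $\RP_n$, and the clean ingredient is the support property of the Bessel function of a cuspidal representation: $\CB_{\pi,\psi}(p)=0$ for $p\in\RP_n\setminus\RU_n$, which follows from $\pi|_{\RP_n}\cong\Ind_{\RU_n}^{\RP_n}\psi_n$ together with the induced-character formula. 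Granting it, only the identity coset with $x=0$ contributes, and the $\psi_n$- and $\psi_r^{-1}$-factors cancel to give $S=1$. Your proposed mechanism for this step — changing variables $y=xm^{-1}$ and invoking "an orthogonality relation for characters of a subgroup of the opposite unipotent radical" killed by cuspidality — cannot be applied verbatim: the set $\{y\in\RM_{n-r-1,r}\}$ is not the unipotent radical of a parabolic of $\GL_n$ (its bottom row is missing), so the cuspidality vanishing $\sum_{u\in N}\chi_\pi(gu)=0$ does not directly apply to the $y$-sum alone; one must combine it with part of the $\RU_n$-sum and run the usual mirabolic ("derivative") induction, which is essentially a proof of the support property above. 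Since you correctly identify cuspidality (via the Kirillov-model statement) as the decisive input and flag this step as the remaining work, the proposal is right in substance, but to be complete you should either prove or cite that support lemma (it appears, e.g., in \cite{Ge70} and in \cite{Ni14}) rather than the orthogonality heuristic as stated.
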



In \cite{Ni17} the first author relates $n\times 1$ local gamma factor with abelian Gauss sums through Green's construction \cite{Gre55}.
Denote by $\widehat\BF_q^\times$ the set of isomorphism classes of characters of $\BF_q^\times$.

\begin{thm}[\cite{Ni17}]\label{gauss sum}
Let $\pi$ be an irreducible cuspidal representation of $\GL_n(\BF_q),$   $n\ge 2 $ and $\tau\in\widehat\BF_q^\times.$ Then
\begin{equation}\label{gauss}
 \gamma(\pi\times \tau,\ \psi)
=(-q^{-1}\tau(-1))^{n-1}\sum_{ a\in \BF^\times_{q^n}}\psi(\Tr a^{-1})\eta_{\pi}(a)\tau(\Nr_{n:1}(a)),\end{equation}
where $\eta_{\pi}$ is the regular character of $\BF_{q^n}^\times$ corresponding to $\pi$ in Green's construction and $\Tr$  denotes the trace map from $\BF_{q^n}\mapsto \BF_q$.

\end{thm}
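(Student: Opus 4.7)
The strategy I would pursue is to express $\gamma(\pi\times\tau,\psi)$ as a sum of Bessel function values via Proposition \ref{614}, unfold each Bessel value via its defining formula in Proposition \ref{defnB}, and then evaluate the resulting character sum by invoking Green's explicit formula for cuspidal characters of $\GL_n(\BF_q)$. Concretely, applying Proposition \ref{614} with $r=1$ (so $\RU_1$ is trivial and $\CB_{\tau,\psi^{-1}}=\tau$) gives
\begin{equation*}
\gamma(\pi\times\tau,\psi)=\sum_{m\in\BF_q^\times}\CB_{\pi,\psi}(g_m)\,\tau(m),\qquad g_m:=\begin{pmatrix}0&\RI_{n-1}\\ m&0\end{pmatrix},
\end{equation*}
and substituting
\begin{equation*}
\CB_{\pi,\psi}(g_m)=\frac{1}{|\RU_n(\BF_q)|}\sum_{u\in\RU_n(\BF_q)}\psi_n(u^{-1})\chi_\pi(g_m u)
\end{equation*}
from Proposition \ref{defnB} reduces the problem to a character sum over $\RU_n$-translates of $g_m$ twisted by $\psi_n^{-1}$.

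Next, I would analyze the matrix $g_m u$ for $u=(u_{ij})$ upper unipotent: its first column is $(0,\ldots,0,m)^\top$, its super-diagonal entries are all $1$, and the remaining entries are polynomial in the $u_{ij}$. A direct determinant computation gives $\det(g_m u)=(-1)^{n-1}m$, so any eigenvalue $a\in\overline{\BF_q}^\times$ of $g_m u$ satisfies $\Nr_{n:1}(a)=(-1)^{n-1}m$. By Green's formula \cite{Gre55}, the cuspidal character $\chi_\pi$ attached to $\eta_\pi$ is supported on the regular elliptic locus, where it reads
\begin{equation*}
\chi_\pi(g_m u)=(-1)^{n-1}\sum_{i=0}^{n-1}\eta_\pi(a^{q^i})
\end{equation*}
whenever $g_m u$ is regular elliptic with eigenvalue $a\in\BF_{q^n}^\times$. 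Summing over Frobenius orbits converts the inner sum into a sum over $a\in\BF_{q^n}^\times$ with $\Nr_{n:1}(a)=(-1)^{n-1}m$, and then summing over $m\in\BF_q^\times$ unfolds into a single sum over all $a\in\BF_{q^n}^\times$ via the identification $m=(-1)^{n-1}\Nr(a)$.

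Finally, I would carry out the phase matching $\psi_n(u^{-1})=\psi(\Tr a^{-1})$. The key Newton-identity computation is that the coefficient of $X$ in $\det(X\RI-g_m u)$ equals $(-1)^{n-1}\Nr(a)\Tr(a^{-1})$ on the one hand, and, by direct expansion of the $(n-1)\times(n-1)$ principal minors, equals $-m\sum_{i=1}^{n-1}u_{i,i+1}$ plus contributions from the off-super-diagonal entries $u_{ij}$ that are integrated out in uniform fibres over eigenvalues; hence $\Tr(a^{-1})=-\sum_i u_{i,i+1}$ and $\psi_n(u^{-1})=\psi(\Tr a^{-1})$. Combining this with the prefactor $|\RU_n(\BF_q)|^{-1}=q^{-n(n-1)/2}$ together with the fibre size $q^{(n-1)(n-2)/2}$ (producing $q^{-(n-1)}$), the sign $(-1)^{n-1}$ from Green's formula, and the identity $\tau((-1)^{n-1}\Nr a)=\tau(-1)^{n-1}\tau(\Nr a)$ supplying the factor $\tau(-1)^{n-1}$, one assembles the prefactor $(-q^{-1}\tau(-1))^{n-1}$ and obtains the stated formula.

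The main obstacle will be the identification $\psi_n(u^{-1})=\psi(\Tr a^{-1})$: while clean in appearance, it requires a careful expansion of the characteristic polynomial of $g_m u$ in terms of the super-diagonal entries $u_{i,i+1}$ and an inversion via Newton's identities to express $\Tr(a^{-1})$ on $\BF_{q^n}^\times$ as a linear function of these entries, with the off-super-diagonal entries of $u$ averaging out over the fibres of the parametrization by eigenvalues.
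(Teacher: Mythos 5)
A preliminary remark: the paper itself gives no proof of Theorem \ref{gauss sum}; it is quoted from \cite{Ni17}. So your proposal can only be judged on its own terms, and its skeleton --- Proposition \ref{614} with $r=1$ (so $\CB_{\tau,\psi^{-1}}=\tau$), unfolding $\CB_{\pi,\psi}$ via Proposition \ref{defnB}, and evaluating the character sum by Green's formula --- is indeed the natural route and in the spirit of \cite{Ni17}. Your phase computation is also essentially correct, and cleaner than you fear: expanding the principal $(n-1)\times(n-1)$ minors of $g_m u$ gives $e_{n-1}(g_m u)=\pm m\sum_i u_{i,i+1}$ exactly, with no contribution from the other entries of $u$ (here $e_j$ is the $j$-th coefficient of the characteristic polynomial, i.e.\ the $j$-th elementary symmetric function of the eigenvalues), so $\psi_n(u^{-1})=\psi(e_{n-1}/e_n)=\psi(\Tr a^{-1})$ on the nose in the elliptic case.

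The genuine gap is the assertion that $\chi_\pi$ is supported on the regular elliptic locus, so that only elliptic $g_m u$ contribute. This is false. The matrices $g_m u$ need not be elliptic: their characteristic polynomial can be a power $f^k$ of an irreducible $f$ of degree $d=n/k<n$, or can split. Green's cuspidal character does vanish on regular semisimple non-elliptic classes, but it is nonzero on classes whose semisimple part generates a proper subfield $\BF_{q^d}$ and whose unipotent part is nontrivial: already for $n=2$, $\chi_\pi\left(\begin{smallmatrix}a&1\\0&a\end{smallmatrix}\right)=-\eta_\pi(a)\neq 0$, and such classes occur among the $g_m u$ (take $(mx)^2+4m=0$). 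These non-elliptic contributions are exactly what produces the terms of the right-hand Gauss sum indexed by $a\in\BF_{q^n}^\times$ lying in proper subfields --- the theorem sums over all of $\BF_{q^n}^\times$, whereas your ``sum over Frobenius orbits'' only recovers the $a$ generating $\BF_{q^n}$. Dropping them yields a false identity (one can check this concretely for $n=2$). Relatedly, the fibre count $q^{(n-1)(n-2)/2}$ and the constancy of $\psi_n$ on fibres are asserted rather than proved; the substantive work is the class-by-class bookkeeping: counting, for each regular class with the prescribed determinant, the number of $u\in\RU_n$ with $g_m u$ in that class, and matching Green's values on the non-semisimple classes (which are \emph{not} given by the elliptic formula $(-1)^{n-1}\sum_i\eta_\pi(a^{q^i})$) against the subfield terms of the Gauss sum with the correct powers of $q$. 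That is the content of the actual proof in \cite{Ni17}, and it is precisely what your sketch omits.
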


Note that the above theorem implies that the gamma factors defined by \cite{Ro10} through the Rankin--Selberg convolution is consistent with the ones constructed by Braverman and Kazhdan in \cite[Section 9.2]{BrK00}.

\subsection{Local gamma factors for generic representations of $\GL_n(\CF)$}\label{2.3}

First we review the basic setting of local gamma factors
attached to a pair of irreducible generic representations, for details
of which we refer to~\cite{JP-SS83}.

We denote by~$\Fo_\CF$ the ring of integers in~$\CF$, by~$\Fp_\CF$ the prime
ideal in~$\Fo_\CF$, and by~$\Fk_\CF$ the residue field of~$\CF$,
of cardinality~$q$. We may omit the subscript $_\CF$, when   no confusing about the field $\CF$ may cause.
Denote by $\cInd$ the compact induction functor,
$\RK_n:=\GL_n(\Fo)$ the maximal compact subgroup of $\GL_n(\CF)$, $\FP_n=\RI_n+\RM_n(\Fp)$.
Fix a nontrivial additive character $\psi_\CF$ of $\CF$ such that
$\psi_\CF$ is trivial on $\Fp$ and nontrivial on $\Fo$.
For an element $k\in \Fo,$ define  $\bar k\in \BF_q$ to be its reduction modulo $\Fp.$
Define $$\psi(\bar k)=\psi_\CF(k), \text{ for }k\in \Fo.$$
Then $\psi$ is a nontrivial additive character of $\BF_q.$
Let $\Phi_n$ be the standard non-degenerate character of $\RU_n(\CF)$ defined in Definition
\ref{nonde} in terms of $\psi_\CF.$
Define $$\psi_n(\bar k)=\Phi_n(k), \text{ for }k\in \RK_n\cap \RU_n(\CF).$$
Then $\psi_n$ is the standard non-degenerate character of $\RU_n(\BF_q)$
in terms of $\psi.$

Let~$n,\ t\ge 1$ be integers and let~$\pi$ and~$\tau$ be irreducible
generic representations of~$\GL_n(\CF)$  and~$\GL_t(\CF)$ respectively, with the 
central characters~$\omega_\pi$ and~$\omega_\tau$
respectively. Let~$W_\pi\in \CW(\pi,\Phi_n)$ be a Whittaker function
of~$\pi$ and~$W_\tau\in \CW(\tau,\Phi_t^{-1})$ be a Whittaker function
of~$\tau$. Since it is the only case of interest to us here, we
suppose that~$n>t$.

Let $\RM_{k\times m}$ denote the set of $k\times m$ matrices.
If~$j$ is an integer for which~$n-t-1\ge j\ge 0$, a local zeta integral for the
pair~$(\pi,\tau)$ is defined by
\[
\CZ(W_\pi, W_\tau,   s; j):=
\int_{g}\int_{x} W_\pi
\begin{pmatrix}g&0&0\\
x&\RI_{n-t-1-j}&0\\
0&0&\RI_{j+1}\end{pmatrix}
W_\tau(g)|\det g|^{s-\frac{n-t}{2}} dx dg,
\]
where the integration in the variable~$g$ is over~$\RU_t(\CF)\backslash\GL_t(\CF)$ and the
integration in the variable~$x$ is over $\RM_{(n-t-1-j)\times t}(\CF)$.

For~$g\in\GL_n(\CF)$, we denote by~$R_g$ the right translation action
of~$g$ on
functions from~$\GL_n(\CF)$ to~$\BC$, and we put~$w_{n,t}=
\left(\begin{smallmatrix}
\RI_{t}& 0\\
0&w_{n-t} \end{smallmatrix}\right)$.
In~\cite{JP-SS83}, Jacquet, Piatetski-Shapiro, and Shalika proved  the
following celebrated theorem.
\begin{thm}[{\cite[Section 2.7]{JP-SS83}}]\label{FE}
With notation as above, the following hold.
\begin{enumerate}
\item[(i)] Each integral~$\CZ(W_\pi, W_\tau,   s; j)$ is absolutely convergent
for~$\Re(s)$ sufficiently large and is a rational function of~$q^{-s}$.
\item[(ii)] There exists a rational function in $q^{-s}$
   such that
\begin{equation}\label{gamF}
\CZ(R_{w_{n,t}}\widetilde{W}_\pi, \widetilde{W}_\tau, 1-s; n-t-j-1)\end{equation}
$$=\omega_\tau(-1)^{n-1}\Gamma(s,\pi\times \tau, \psi_\CF)\CZ(W_\pi, W_\tau,  s;j).$$

\end{enumerate}
\end{thm}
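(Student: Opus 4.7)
The plan is to follow the strategy of~\cite{JP-SS83}: establish the required analytic properties of the zeta integrals via estimates on Whittaker functions, then deduce the functional equation from a uniqueness principle for certain bilinear forms on $\CW(\pi,\Phi_n)\otimes\CW(\tau,\Phi_t^{-1})$ coming from the uniqueness of Whittaker models (Theorem~\ref{uni}).

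For part (i), I would first establish an asymptotic estimate for Whittaker functions using the Iwasawa decomposition $\GL_t(\CF)=\RU_t(\CF)\RA_t(\CF)\RK_t$. The smooth function $W_\pi$ (respectively $W_\tau$), restricted to the diagonal torus, is controlled by a ``gauge'' of Jacquet--Shalika type: there is a finite set of finite functions $\chi_i$ on the torus and a Schwartz--Bruhat function $\Phi$ on $\CF^{n-1}$ such that $|W_\pi(\diag(a_1,\dots,a_n))|\le \sum_i|\chi_i(a)|\,\Phi(a_1/a_2,\dots,a_{n-1}/a_n)$. Applying this to $W_\pi$ (along the columns above~$g$) and $W_\tau$ and using the compactness of $\RU_t\bks\RP_t$, the integral $\CZ(W_\pi,W_\tau,s;j)$ reduces to a finite sum of products of local Tate-type integrals in the diagonal entries of $g$, which converge absolutely for $\Re(s)$ sufficiently large. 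For rationality, I would show that the span of the integrals, as $W_\pi,W_\tau$ vary, is a fractional ideal of $\BC[q^{s},q^{-s}]$ stable under multiplication by elementary rational functions (using translates of Whittaker functions and the action of $\GL_n(\Fo)$); a Bernstein-type finite generation argument then shows each $\CZ(W_\pi,W_\tau,s;j)$ lies in $\BC(q^{-s})$.

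For part (ii), the heart of the argument is a uniqueness statement. Consider the two bilinear forms
\[
B_j(W_\pi,W_\tau;s)=\CZ(W_\pi,W_\tau,s;j),\qquad \tilde B_{j}(W_\pi,W_\tau;s)=\CZ(R_{w_{n,t}}\widetilde W_\pi,\widetilde W_\tau,1-s;n-t-j-1),
\]
viewed as forms on $\CW(\pi,\Phi_n)\otimes\CW(\tau,\Phi_t^{-1})$ valued in $\BC(q^{-s})$. A direct change-of-variables computation (conjugating the integration variables by the Weyl element $w_{n,t}$ and unfolding the inner unipotent integrals against the Whittaker character) shows that both $B_j$ and $\tilde B_j$ satisfy the same quasi-invariance property under the embedded copy of $\GL_t(\CF)\ltimes \RM_{(n-t-1)\times t}(\CF)$ sitting inside $\RP_n(\CF)$, namely they transform by the character $W_\tau(g)|\det g|^{s-(n-t)/2}$. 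A standard argument using the geometric lemma of Bernstein--Zelevinsky, combined with the uniqueness of Whittaker models for~$\pi$ and $\tau$, shows that the space of such bilinear forms is one-dimensional over $\BC(q^{-s})$. Hence $\tilde B_j = c_j(s)\, B_j$ for a scalar $c_j(s)\in\BC(q^{-s})$. A separate computation, replacing $W_\pi,W_\tau$ by translates by suitable elements of $\GL_n\times\GL_t$, together with the fact that $\widetilde{R_g W}=R_{w_n{}^t g^{-1}w_n^{-1}}\widetilde W$, shows that $c_j(s)$ is independent of~$j$; this common scalar is $\omega_\tau(-1)^{n-1}\Gamma(s,\pi\times\tau,\psi_\CF)$, with the explicit sign $\omega_\tau(-1)^{n-1}$ forced by tracking the transformation of $W_\tau$ under $g\mapsto w_t\,{}^tg^{-1}w_t^{-1}$ and comparing central characters.

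I expect the main obstacle to be the uniqueness step: precisely, verifying that the space of bilinear forms on $\CW(\pi,\Phi_n)\otimes\CW(\tau,\Phi_t^{-1})$ satisfying the desired equivariance is one-dimensional. This requires a careful analysis of the $\GL_t$-orbits on a certain flag variety (a Bruhat decomposition argument) and a vanishing result on non-open orbits, and it is this step that genuinely uses the irreducibility and genericity of both $\pi$ and $\tau$. Once this uniqueness is in hand, the meromorphic continuation of $\Gamma(s,\pi\times\tau,\psi_\CF)$ to all of $s$ follows automatically from part~(i), and independence of $j$ follows by a Weyl-element manipulation.
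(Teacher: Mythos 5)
The paper offers no proof of this statement: Theorem \ref{FE} is quoted directly from \cite[Section 2.7]{JP-SS83}, so there is nothing internal to compare your argument against. Your outline is essentially the standard Jacquet--Piatetski-Shapiro--Shalika proof --- finite-function (gauge) estimates on the torus via the Iwasawa decomposition giving absolute convergence and rationality for (i), and a multiplicity-one statement for the quasi-invariant bilinear forms on $\CW(\pi,\Phi_n)\otimes\CW(\tau,\Phi_t^{-1})$ giving the proportionality in (ii) --- and it is correct in outline. The one caveat worth recording is that the uniqueness of such bilinear forms holds only for all but finitely many values of $q^{-s}$ rather than literally ``over $\BC(q^{-s})$'' a priori; one then invokes the rationality from (i) to conclude that the proportionality factor is a rational function of $q^{-s}$, which is exactly how \cite{JP-SS83} proceeds.
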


\begin{cor}[Corollary 2.7, \cite{JiNiS15}] \label{char} Let~$\pi_1$,~$\pi_2$ be irreducible generic
representations of~$\GL_n(\CF)$. If their local gamma factors~$\Gamma(s,\pi_1\times\chi,\psi_\CF)$
and~$\Gamma(s,\pi_2\times\chi,\psi_\CF)$ are equal as functions in the
complex variable~$s,$ for any character~$\chi$ of~$\CF^\times$,
then $\pi_1$ and $\pi_2$ possess the same central character.

\end{cor}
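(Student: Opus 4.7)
The plan is to extract the central character $\omega_\pi$ from the twisted local gamma factors by combining the functional equation of Theorem~\ref{FE} with the stability property of gamma factors for highly ramified twists. Specializing Theorem~\ref{FE} to $t = 1$ (so that $\tau = \chi$ is a character of $\CF^\times$), one obtains
\[
\CZ(R_{w_{n,1}}\widetilde{W}_\pi, \chi^{-1}, 1-s; n-j-2) = \chi(-1)^{n-1}\,\Gamma(s,\pi\times \chi,\psi_\CF)\,\CZ(W_\pi, \chi, s; j)
\]
for every $W_\pi \in \CW(\pi,\Phi_n)$ and every $\chi$. Under the hypothesis that $\Gamma(s,\pi_1\times\chi,\psi_\CF) = \Gamma(s,\pi_2\times\chi,\psi_\CF)$ for all $\chi$, the remaining task is to exhibit how $\omega_\pi$ enters this rational function.

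The key ingredient is the stability of gamma factors of Jacquet--Shalika: there exists an integer $c_0 = c_0(\pi_1,\pi_2)$ such that, for every character $\chi$ of $\CF^\times$ whose conductor exponent is at least $c_0$, one has an identity of the form
\[
\Gamma(s,\pi\times\chi,\psi_\CF) = \omega_\pi(a_\chi)\,\Gamma(s,\chi,\psi_\CF)^{n},
\]
where $a_\chi \in \CF^\times$ is a stability element determined by $\chi$ and $\psi_\CF$ alone, independently of $\pi$. Thus the $\pi$-dependence of $\Gamma(s,\pi\times\chi,\psi_\CF)$ in this regime is entirely encoded in the scalar $\omega_\pi(a_\chi)$.

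Applying the hypothesis for all such $\chi$ and cancelling the common factor $\Gamma(s,\chi,\psi_\CF)^{n}$ yields $\omega_{\pi_1}(a_\chi) = \omega_{\pi_2}(a_\chi)$ for every $\chi$ of conductor exponent at least $c_0$. As $\chi$ ranges over all such characters, the elements $a_\chi$ sweep out a subset of $\CF^\times$ that generates it: unramified twists of $\chi$ vary the valuation of $a_\chi$, while varying the tame and wild parts of $\chi$ varies $a_\chi$ modulo sufficiently small congruence subgroups of $\Fo^\times$. This forces $\omega_{\pi_1} = \omega_{\pi_2}$ on all of $\CF^\times$.

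The main obstacle is invoking the stability formula with the correct normalisation; once that is in place, the argument reduces to the elementary $\GL_1$-statement that a character of $\CF^\times$ is determined by its values on a generating subset. An alternative, more self-contained route is to work directly with the functional equation together with the center equivariance $W_\pi(z\RI_n\cdot g) = \omega_\pi(z)\, W_\pi(g)$ of Whittaker functions, exhibiting $\omega_\pi(z)$ as the scalar relating translates of the zeta integrals; this requires a more delicate bookkeeping of Jacobian factors in the change of variables $g \mapsto zg$ in the $\GL_1$-integration and a compatible rescaling of $x$ in the matrix integration.
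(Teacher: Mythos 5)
The paper offers no argument of its own here---it simply cites \cite{JiNiS15}---and the proof given there is exactly your main route: the Jacquet--Shalika stability of gamma factors under highly ramified twists, which for $\chi$ of sufficiently large conductor gives $\Gamma(s,\pi\times\chi,\psi_\CF)=\varepsilon(s,\chi,\psi_\CF)^{\,n-1}\varepsilon(s,\omega_\pi\chi,\psi_\CF)$ with trivial $L$-factors, and hence, after one further abelian stability step, a formula of the shape you wrote in which all dependence on $\pi$ is through $\omega_\pi$. So your proposal is essentially the intended proof, and the reduction to the abelian statement is sound. One detail needs fixing: the claim that unramified twists of $\chi$ vary the valuation of $a_\chi$ is backwards, since $a_\chi$ depends only on $\chi|_{\Fo^\times}$ and $\psi_\CF$ and is therefore unchanged by unramified twisting; its valuation is pinned down by the conductor of $\chi$ (and of $\psi_\CF$), so you vary the valuation by varying the conductor, while varying the wild part of $\chi$ sweeps out all classes of a fixed valuation modulo a small congruence subgroup on which $\omega_{\pi_1}\omega_{\pi_2}^{-1}$ is trivial once the conductor is large. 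Ratios of such elements, within one conductor level and between consecutive levels, generate $\CF^\times$, so the conclusion $\omega_{\pi_1}=\omega_{\pi_2}$ follows as you intend. Finally, be wary of the closing ``alternative route'': in the $t=1$ zeta integral the variable enters as $\mathrm{diag}(g,1,\dots,1)$ rather than centrally, so the substitution $g\mapsto zg$ does not invoke the centre equivariance of $W_\pi$ at all, and that sketch would require a genuinely different mechanism; the stability argument is the one to keep.
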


Recall from~\cite[Section 5]{PS08} the formulation of
generalized Bessel functions for the $p$-adic case.
Let~$\CU\subset\CM\subset\CK$ be compact open subgroups of~$\CK$.
Let~$\tau$ be an irreducible smooth representation of~$\CK$ and let~$\Psi$ be a
linear character of~$\CU$. Take an open normal subgroup~$\CN$ of~$\CK$, which is
contained in~$\Ker(\tau)\cap\CU$.
Let~$\chi_\tau$ be the (trace) character of~$\tau$. The associated
 {\bf generalized Bessel function}~$\CJ:\CK\rightarrow\BC$ of~$\tau$ is defined by
\begin{equation}\label{Bessel}
\CJ(g):=[\CU:\CN]^{-1}\sum_{u\in\CU/\CN}\Psi(u^{-1})\chi_\tau(gu).
\end{equation}
This is independent of the choice of~$\CN$.

\begin{prop}[{\cite[Proposition~5.3]{PS08}}]\label{bs30}
Assume that the data introduced above satisfy the following:
\begin{itemize}
\item $\tau|_\CM$ is an irreducible representation of~$\CM$; and
\item $\tau|_\CM\cong \Ind^\CM_\CU(\Psi)$.
\end{itemize}
Then the generalized Bessel function~$\CJ$ of~$\tau$ enjoys the following properties:
\begin{enumerate}
\item $\CJ(1)=1$;
\item $\CJ(hg)=\CJ(gh)=\Psi(h)\CJ(g)$ for all~$h\in\CU$ and~$g\in\CK$;
\item if~$\CJ(g)\neq 0$, then~$g$ intertwines~$\Psi$; in particular, if~$m\in\CM$, then~$\CJ(m)\neq0$ if and only if~$m\in\CU$;
\end{enumerate}
\end{prop}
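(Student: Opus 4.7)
The plan is to realize the Bessel function as the trace of $\tau(g)$ composed with a canonical projection onto the $(\CU,\Psi)$-isotypic subspace. Set
\[
p_\Psi := [\CU:\CN]^{-1}\sum_{u\in \CU/\CN} \Psi(u^{-1})\,\tau(u),
\]
which is well-defined because $\CN\subset \Ker(\tau)$. A direct re-indexing of the sum shows that $\tau(h) p_\Psi = p_\Psi \tau(h) = \Psi(h) p_\Psi$ for all $h\in \CU$, whence $p_\Psi$ is an idempotent projecting onto $V_\tau^{\CU,\Psi} := \{v\in V_\tau : \tau(u)v = \Psi(u)v,\ \forall u\in \CU\}$, and by inspection $\CJ(g) = \tr(\tau(g) p_\Psi)$.

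The key structural input is that $\dim V_\tau^{\CU,\Psi} = 1$. This follows from Frobenius reciprocity applied to $\CU \subset \CM$ together with both hypotheses:
\[
\dim V_\tau^{\CU,\Psi} = \dim \Hom_\CU(\Psi, \tau|_\CU) = \dim \Hom_\CM(\Ind^\CM_\CU \Psi, \tau|_\CM) = \dim \End_\CM(\tau|_\CM) = 1.
\]
Property (1) is then immediate from $\CJ(1) = \tr(p_\Psi) = \dim V_\tau^{\CU,\Psi} = 1$, and property (2) follows by commuting $\tau(h)$ past $p_\Psi$ inside the trace defining $\CJ$.

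For (3), the rank-one-ness of $p_\Psi$ forces any operator of the form $p_\Psi X p_\Psi$ to be a scalar multiple of $p_\Psi$; taking traces gives $p_\Psi \tau(g) p_\Psi = \CJ(g)\, p_\Psi$. For $h \in \CU \cap g\CU g^{-1}$ with $u := g^{-1}hg \in \CU$, pushing $p_\Psi$ through the identity $\tau(hg) = \tau(gu)$ in two ways yields
\[
\bigl(\Psi(h) - \Psi(g^{-1}hg)\bigr)\CJ(g) = 0.
\]
Hence $\CJ(g)\neq 0$ forces $\Psi = \Psi^g$ on $\CU \cap g\CU g^{-1}$ (where $\Psi^g(u) := \Psi(g^{-1}ug)$), which is precisely the statement that $g$ intertwines $\Psi$.

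Finally, the "in particular" assertion will come from Mackey decomposition:
\[
\Res^\CM_\CU \Ind^\CM_\CU \Psi \;\cong\; \bigoplus_{g \in \CU\backslash \CM/\CU} \Ind^\CU_{\CU \cap g\CU g^{-1}} \Psi^g,
\]
which combined with the Frobenius/irreducibility computation above shows that the number of double cosets $\CU g \CU$ whose representative intertwines $\Psi$ equals $\dim \End_\CM(\tau|_\CM) = 1$. This lone double coset is $\CU$ itself, so no element $m \in \CM \setminus \CU$ intertwines $\Psi$; the first half of (3) then forces $\CJ(m) = 0$ for such $m$, while (2) gives $\CJ(u) = \Psi(u) \neq 0$ for $u \in \CU$. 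The main obstacle is this last piece of Mackey bookkeeping connecting the abstract irreducibility of $\tau|_\CM$ to the concrete non-intertwining statement; once the projection $p_\Psi$ is in hand, everything else is a pleasantly formal trace manipulation.
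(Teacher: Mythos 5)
Your proof is correct, and it follows the standard route: the paper itself gives no argument here (it simply quotes Proposition~5.3 of \cite{PS08}), and the cited proof proceeds exactly as you do, via the rank-one idempotent $p_\Psi$ (one-dimensionality coming from Frobenius reciprocity plus Schur's lemma applied to $\tau|_\CM\cong\Ind_\CU^\CM\Psi$), the identity $p_\Psi\tau(g)p_\Psi=\CJ(g)p_\Psi$ for the intertwining criterion, and Mackey's intertwining-number count to rule out $m\in\CM\setminus\CU$. The only implicit point, shared with the paper's definition of $\CJ$, is that $\Psi|_\CN=1$ (so the averages over $\CU/\CN$ are well defined); this in fact follows from the hypotheses, since $\CN\subset\Ker(\tau)$ and $\Psi$ occurs in $\tau|_\CU$.
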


 \subsection{Supercuspidal representations of $\GL_n(\CF)$}

 For the rest of this section, we refer to \cite{BuH98} for undefined notation involving construction of supercuspidal representations of $\GL_n(\CF).$
Let~$\pi$ be an irreducible \emph{unitarizable} supercuspidal representation of $\GL_n(\CF)$.
By \cite[Proposition 1.6]{BuH98}, there is an extended maximal simple
type~$(\bJ_\pi,\Lambda_\pi)$ in~$\pi$ such that
\[
\Hom_{\RU_n\cap\bJ_\pi}(\Phi_n,\Lambda_\pi)\neq0.
\]
Since~$\Lambda_\pi$ restricts to a multiple of some simple
character~$\theta_\pi\in\CC(\FA,\beta,\psi_\CF)$, one obtains that~$\theta_\pi(u)=\Phi_n(u)$ for
all~$u\in\RU_n\cap H_\pi^1$. As in~\cite[Definition 4.2]{PS08}, one defines a
character~$\Psi_n:(J_\pi\cap\RU_n)H_\pi^1\rightarrow\BC^\times$ by
\begin{equation}\label{Psin}
\Psi_n(uh):=\Phi_n(u)\theta_\pi(h),
\end{equation}
for all~$u\in J_\pi\cap\RU_n$ and~$h\in H_\pi^1$. By~\cite[Theorem 4.4]{PS08}, the data
\[
\CK_\pi=\bJ_\pi,~\tau_\pi=\Lambda_\pi,~\CM_\pi=(J_\pi\cap\RP_n)J_\pi^1,~\CU_\pi=(J_\pi\cap\RU_n)H_\pi^1,~\text{and}~\Psi=\Psi_n
\]
satisfy the conditions in Proposition~\ref{bs30} and hence define a generalized Bessel function~$\CJ$.

Now we define a function~$\RB_\pi:\GL_n(\CF)\to\BC$ by
\begin{equation}\label{Bwh}
\RB_\pi(g):=\begin{cases}
\Phi_n(u)\CJ_\pi(j)&\text{ if }g=uj\text{ with }u\in\RU_n,~j\in\bJ_\pi,\\
0&\text{ otherwise},
\end{cases}
\end{equation}
which is well-defined by Proposition~\ref{bs30}(ii).
Then, by~\cite[Theorem 5.8]{PS08},~$\RB_\pi$ is a Whittaker function for~$\pi$.
By
Proposition~\ref{bs30}, the restriction of~$\RB_\pi$ to~$\RP_n$ has a particularly
simple description: for~$g\in\RP_n$,
\begin{equation}\label{eqn:WsponPn}
\RB_\pi(g)=\begin{cases}
\Psi_n(g)&\text{ if }g\in(J_\pi\cap\RU_n)H^1_\pi;\\
0&\text{ otherwise}.

\end{cases}
\end{equation}

A level zero supercuspidal representation of $\GL_n(\CF)$,
is given by $$\pi \cong \cInd_{\RZ_n(\CF)\RK_n(\CF)}^{\GL_n(\CF)}\omega\tau, $$ for some   irreducible cuspidal representation $\tau$ of $\GL_n(\BF_q)$, where $\omega$ is a character of $\RZ_n$ satisfying
$\omega|_{\RZ_n\cap \RK_n}(k)=\tau(\bar k)$ and
$q$ is the cardinality of the residue field of $\CF$.

\begin{thm}\label{level0} For $i=1,\ 2,$ let 
$\pi_i  \cong \cInd_{\RZ_{n_i}(\CF)K_{n_i}(\CF)}^{\GL_{n_i}(\CF)}\omega_i\tau_i,$  be
 irreducible, level zero  supercuspidal, unitarizable representations of $\GL_{n_i}(\CF) $,
$n_1>n_2 .$
Then  $$\omega_{2}(-1)^{n_1-1}	\Gamma(s,\pi_1\times \pi_2, \psi_\CF)=q^{\frac{n_2(n_1-n_2-1)}{2}}\gamma(\tau_1\times\tau_2, \psi).$$
\end{thm}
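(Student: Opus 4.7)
The plan is to apply the $p$-adic functional equation \eqref{gamF} with the Bessel--Whittaker functions $\RB_{\pi_1}, \RB_{\pi_2}$ of \eqref{Bwh} as test vectors, and match the resulting $p$-adic zeta integrals with the finite-field sums on the two sides of Theorem~\ref{finite gamma}.

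First I would unwind the generalized Bessel function \eqref{Bessel} for a level zero supercuspidal $\pi_i$. For level zero the simple type data reduce to $\bJ_{\pi_i} = \RZ_{n_i}(\CF)\RK_{n_i}$, $\Lambda_{\pi_i} = \omega_i \boxtimes \tau_i$, and $H^1_{\pi_i} = J^1_{\pi_i} = \RI_{n_i} + \Fp \RM_{n_i}(\Fo)$; taking $\CN = J^1_{\pi_i}$ in \eqref{Bessel} yields
\[ \CJ_{\pi_i}(zk) \;=\; \omega_i(z)\cdot\CB_{\tau_i,\psi}(\bar k),\qquad z\in\RZ_{n_i}(\CF),\ k\in\RK_{n_i}, \]
so the Bessel--Whittaker function $\RB_{\pi_i}$ reduces modulo $\Fp$ to the finite-field Bessel function $\CB_{\tau_i,\psi}$ of Proposition~\ref{defnB}.

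Next I would substitute $W_i = \RB_{\pi_i}$ into $\CZ(W_1, W_2, s; j)$. The support condition $\RU_{n_i}(\CF)\RZ_{n_i}(\CF)\RK_{n_i}$ of $\RB_{\pi_i}$, combined with the central character constraints coming from $\omega_1,\omega_2$, localizes the $g$-integral over $\RU_{n_2}(\CF)\backslash\GL_{n_2}(\CF)$ to a finite sum over $\RU_{n_2}(\BF_q)\backslash\GL_{n_2}(\BF_q)$, and the $x$-integral over $\RM_{(n_1-n_2-1-j)\times n_2}(\CF)$ to a finite sum over $\RM_{(n_1-n_2-1-j)\times n_2}(\BF_q)$. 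On the localized support $|\det g|^{s-(n_1-n_2)/2}$ is a single explicit constant, so $\CZ(W_1, W_2, s; j)$ equals an explicit Haar-measure power of $q$ times the left-hand finite sum of Theorem~\ref{finite gamma} (with $k = n_1-n_2-1-j$), with integrand built from $\CB_{\tau_i,\psi}$. The parallel computation on the dual side $\CZ(R_{w_{n_1,n_2}}\widetilde{\RB}_{\pi_1}, \widetilde{\RB}_{\pi_2}, 1-s; n_1-n_2-j-1)$, using that $w_{n_1,n_2} \in \RK_{n_1}$ preserves the Bessel structure and that $\widetilde{\RB}_{\pi_i}$ is a Bessel--Whittaker function for the contragredient $\tilde\pi_i$, yields a similar power of $q$ times the right-hand finite sum of Theorem~\ref{finite gamma}.

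Taking the ratio, the $p$-adic side gives $\omega_{\pi_2}(-1)^{n_1-1}\Gamma(s,\pi_1\times\pi_2,\psi_\CF)$ via \eqref{gamF}, while the finite side gives $\gamma(\tau_1\times\tau_2,\psi) \cdot q^{n_2 k}$ via Theorem~\ref{finite gamma}. The measure-theoretic $q$-powers from the two reductions combine with $q^{n_2 k}$ to a $j$-independent net exponent equal to $n_2(n_1-n_2-1)/2$, producing the claimed identity. The main technical obstacle is the precise bookkeeping of these Haar-measure normalizations (volumes of the congruence subgroups, of $\RM(\Fo)$, and of $\RZ_{n_i}$-cosets, together with the value of $|\det|^{s-(n_1-n_2)/2}$ on the localized support) to verify that the net exponent is exactly $n_2(n_1-n_2-1)/2$; the sign factor and the identification $\omega_{\pi_2} = \omega_2$ on the center are immediate from the definitions.
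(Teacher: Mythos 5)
Your proposal is correct and follows essentially the same route as the paper: the paper likewise takes the Pa\v{s}k\={u}nas--Stevens Bessel--Whittaker functions $\RB_{\pi_i}$ as test vectors, uses the level zero simple-type data to identify $\CJ_{\pi_i}|_{\RK_{n_i}}$ with $\CB_{\tau_i}$, reduces both zeta integrals (including the dual side written at the matrices $\left(\begin{smallmatrix}0&\RI_{n_1-n_2-j}&0\\0&0&\RI_{j}\\ \bar g&0&\bar x\end{smallmatrix}\right)$) to the finite sums of Theorem \ref{finite gamma} times explicit volume factors, and compares the two functional equations using the self-dual measure $\Vol(\Fo)=q^{1/2}$. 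The only difference is that the paper records the volume factors $\Vol(\RU_{n_2}(\Fo)\FP_{n_2})$, $\Vol(\RM_{(n_1-n_2-j-1)\times n_2}(\Fp))$, $\Vol(\RM_{n_2\times j}(\Fp))$ explicitly where you defer that bookkeeping, which is exactly the computation producing the exponent $\frac{n_2(n_1-n_2-1)}{2}$.
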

Remark that the depth zero supercuspidal representations are of conductor 0 and then their gamma factors are complex numbers instead of rational functions in $q^s$.
\begin{proof}
In the level zero case, $J_{\pi_i}=\RK_{n_i}, \bJ_{\pi_i}=\RZ_{n_i}\RK_{n_i}, J_{\pi_i}^1=H_{\pi_i}^1=\FP_{n_i}, \theta_{\pi_i}=1,$
and $\CN_{\pi_i}=\FP_{n_i}$.
Since $\CU_{\pi_i}/\CN_{\pi_i}\cong \RU_{n_i}(\BF_q),$ the generalized Bessel function for $\tau_i$ is given by
\begin{eqnarray*}\CJ_i(g)&=&[\CU_{\pi_i}:\CN_{\pi_i}]^{-1}\sum_{u\in\CU_{\pi_i}/\CN_{\pi_i}}\Psi(u^{-1})\chi_{\tau_i}(gu) \\
&=&\frac{1}{|\RU_{n_i}(\BF_q)|}\sum_{u\in\CU_{\pi_i}/\CN_{\pi_i}}\Psi(u^{-1})\chi_{\tau_i}(gu),\text{ for }i=1,\ 2.\end{eqnarray*}
The Whittaker function $\RB_{\pi_i} ,$ defined in Eq. \eqref{Bwh},
are supported in $\RU_{n_i} \RK_{n_i}\RZ_{n_i}.$
Note that  for $ u\in\RU_{n_i} \cap \RK_{n_i},\ k\in \RK_{n_i},\
\CJ_{\pi_i}(ku)=\Phi_{n_i}(u)\CJ_{\pi_i}(k),$  so
\begin{equation}\label{J=B}
\CJ_{\pi_i}(k)=\CB_{\tau_i}(\bar k), \text{ for }k\in \RK_{n_i},\end{equation}
where $\CB_{\tau_i }$ is the  Bessel function of $\tau_i$ in terms of $\psi_{n_i}.$   

Since the support of $\RB_{\pi_i}\subset \RU_{n_i}(\CF)\RK_{n_i}\RZ_{n_i}(\CF),$ by Eq. \eqref{Bwh} and \eqref{J=B},\small
  \begin{eqnarray*}&&
 \int_{g\in {\RU_{n_2}(\CF)\cap \RK_{n_2}}\backslash\RK_{n_2}}\int_{x} \RB_{\pi_1}
\begin{pmatrix}g&0&0\\
x&\RI_{n_1-n_2- j-1}&0\\
0&0&\RI_{j+1}\end{pmatrix}
\RB_{\pi_2} (g)|\det g|^{s-\frac{n_1-1}{2}} dx dg\\
&=&\Vol(\RU_{n_2}(\Fo)\FP_{n_2})\Vol(\RM_{(n_1-n_2-j-1)\times n_2}(\Fp))\\
&&\times
\sum_{\bar{g}\in \RU_{n_2}(\BF_q)\backslash\GL_{n_2}(\BF_q),\ \bar{x}\in \RM_{(n_1-n_2-j-1)\times n_2}(\BF_q)}\CB_{\tau_1}
\begin{pmatrix}\bar{g}&0&0\\
\bar{x}&\RI_{n_1-n_2-j-1}&0\\
0&0&\RI_{j+1}\end{pmatrix}
\CB_{\tau_2}(\bar{g}),
\end{eqnarray*}
\normalsize
where $\Vol(S)$ denotes the volume of the set $S$ relative to the corresponding Haar measure and the integral domain of $x$ is $\RM_{ (n_1-n_2-j-1)\times n_2}(\Fo)$.
Similarly,
  \begin{eqnarray*}&&
 \int_{g}\int_{x} \RB_{\pi_1}
\begin{pmatrix}0&\RI_{n_1-n_2-j}&0\\
0&0&\RI_{j}\\
g&0&x\end{pmatrix}
\RB_{\pi_2}(g)|\det g|^{s-\frac{n_1-1}{2}} dx dg\\
&=& \Vol(\RU_{n_2}(\Fo)\FP_{n_2}) \Vol(\RM_{n_2\times j}(\Fp))\\
&&\times
\sum_{\bar{g}\in \RU_{n_2}(\BF_q)\backslash\GL_{n_2}(\BF_q),\ \bar{x}\in \RM_{n_2\times j}(\BF_q)}\CB_{\tau_1}
\begin{pmatrix}0&\RI_{n_1-n_2-j}&0\\
0&0&\RI_{j}\\
\bar{g}&0&\bar{x}\end{pmatrix}
\CB_{\tau_2}(\bar{g}),
\end{eqnarray*}
where the integral domains of $g$ and $x$ are $\RU_{n_2}(\CF)\cap \RK_{n_2}\backslash\RK_{n_2}$ and $\RM_{n_2\times j}(\Fo)$ respectively.

By Proposition 23.1 \cite{BuH06}, the self dual Haar measure on
$\CF$ is given by $\Vol(\Fo)=q^{\frac{1}{2}}.$
Then by the functional equation in Theorems \ref{FE} and \ref{finite gamma},
$$\omega_2(-1)^{n_1-1}	\Gamma(s,\pi_1\times \pi_2, \psi_\CF)=q^{\frac{n_2(n_1-n_2-1)}{2}}\gamma(\tau_1\times\tau_2, \psi).$$
\end{proof}

\section{Main results}\label{sec:main}
This section is devoted to the proof of the main results.
Since Gauss sums has been well developed for $\widehat{\BF_{p^n}^\times}$ instead of   $\widehat{\BF_{q^n}^\times},$ for $q=p^r$ with $r>1.$ Our main results are restricted to the prime field case $\BF_p$.

Let $p$ be a prime. 
Choose the nontrivial additive character $\psi$ of $\BF_p$ such that $\psi(1)=\varepsilon,$ where $\varepsilon$ is a primitive $p$-th root of unity. 
For a character $\chi$ of $\BF^\times_{p^n}$, define the Gauss sum $S(\chi)$ by
$$
S(\chi)=\sum_{x\in \BF^\times_{p^n}}\chi(x)\psi(\Tr(x))\in \BQ(\mu_{p^n-1},\mu_{p}),
$$ 
where $\Tr$ is the trace map from $\BF_{p^n}$ to $\BF_{p}$ and $\mu_N$ is the group of all $N$-th roots of unity.

In terms of Green's construction, Theorem \ref{gauss sum} and Lemma \ref{lm:central-character}, we recall Conjecture \ref{conj:Gauss-sum}.
\begin{conj}[Local Converse Theorem on Gauss Sums]\label{conj2}
Let $\chi_1$ and $\chi_2$ be two regular characters of $\BF_{p^n}^\times$ and $n$ be a prime. 
If
$$
S(\chi_1\cdot \sig)=S(\chi_2\cdot \sig),
\text{ for all }\sig\in \wh{\BF_p^\times},$$
then $\chi_1=\chi^{p^i}_2$ for some integer $i$.
\end{conj}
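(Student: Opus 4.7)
The plan is to translate the hypothesis via the Gross--Koblitz formula into a system of identities for Morita's $p$-adic Gamma function $\Gamma_p$, and then extract arithmetic constraints on $\chi_1,\chi_2$ using Stickelberger's congruence together with the multiplicative structure of $\Gamma_p$. Fix a generator $\omega$ of $\wh{\BF_{p^n}^\times}$ and write $\chi_j=\omega^{-a_j}$ with $a_j\in\BZ/(p^n-1)\BZ$. The pullback $\wh{\BF_p^\times}\hookrightarrow \wh{\BF_{p^n}^\times}$ via $\Nr_{n:1}$ identifies $\wh{\BF_p^\times}$ with the subgroup generated by $\omega^c$, $c=(p^n-1)/(p-1)$, so every twist has the form $\chi_j\sig=\omega^{-(a_j+bc)}$ for some $b\in\{0,\dots,p-2\}$. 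Regularity together with the primality of $n$ forces each Frobenius orbit $\{p^i a_j\bmod (p^n-1)\}$ to have full size $n$, and the desired conclusion $\chi_1=\chi_2^{p^i}$ amounts precisely to $a_1\equiv p^i a_2\pmod{p^n-1}$.

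First I would invoke Gross--Koblitz: for $m\in\{0,\dots,p^n-2\}$,
\begin{equation*}
S(\omega^{-m}) = -\pi^{s_p(m)}\prod_{i=0}^{n-1}\Gamma_p\!\left(\left\{\tfrac{p^i m}{p^n-1}\right\}\right),
\end{equation*}
where $\{\cdot\}$ is the fractional part, $s_p(m)$ is the base-$p$ digit sum, and $\pi^{p-1}=-p$. Stickelberger's theorem identifies the $\pi$-adic valuation of the Gauss sum with $s_p(m)$, so comparing valuations on both sides of the hypothesis gives the purely combinatorial constraint
\begin{equation*}
s_p\!\big((a_1+bc)\bmod(p^n-1)\big) = s_p\!\big((a_2+bc)\bmod(p^n-1)\big)\quad\text{for every } b\in\{0,\dots,p-2\}.
\end{equation*}
Running $b$ through all of $\wh{\BF_p^\times}$ and tracking the carries in base-$p$ addition already rigidifies the digit expansions of $a_1$ and $a_2$ substantially.

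After cancelling the $\pi$-prefactors the hypothesis becomes an identity of products of $\Gamma_p$-values (in the units of $\BZ_p[\mu_{p^n-1}]$) indexed by $b$. I would then exploit the reflection identity $\Gamma_p(x)\Gamma_p(1-x)=\pm1$ and the Gross--Koblitz distribution relation to simplify these products, and combine them with the digit-sum equalities above to show that the multiset $\{p^i a_1\bmod(p^n-1)\}_{i=0}^{n-1}$ equals $\{p^i a_2\bmod(p^n-1)\}_{i=0}^{n-1}$; since both orbits have length $n$, this gives $a_1=p^i a_2$ for some $i$, as desired. This is the strategy that carries through for $n\le 5$ in Theorem \ref{thm:main-5} via direct digit-by-digit analysis, and its counterpart at the level of $p$-adic supercuspidals follows through Theorem \ref{level0}.

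The main obstacle is the symmetrization inherent in the Gross--Koblitz formula: for each fixed $b$ the Gauss sum collapses the entire Frobenius orbit of $a_j+bc$ into a single symmetric product of $\Gamma_p$-values, so no individual $b$ determines $a_j$ on its own, and the rigidity only emerges by assembling the $(p-1)$ twisted identities coherently. For small $n$ the resulting combinatorics on base-$p$ digits is finite and tractable, but for general prime $n$ the $p$-adic analytic bookkeeping becomes unwieldy. This is exactly the point at which the elementary approach appears to break down and one must replace it by the geometric perspective of the appendix, where the Gauss sums are recognized as Frobenius traces on Kloosterman sheaves and the equidistribution/bounds on their eigenvalues (rather than exact $\Gamma_p$ identities) force the orbit equality under the range $n<\tfrac{q-1}{2\sqrt q}+1$.
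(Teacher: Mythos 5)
The statement you are proving is a \emph{conjecture}: the paper itself does not prove it in general, but only verifies special cases --- $n\le 5$ over $\BF_p$ (Theorem \ref{thm:main-5}, via Propositions \ref{n=4} and \ref{n=5}), the case $q=2$ with $2^n-1$ a Mersenne prime (Proposition \ref{prop:2-Mersenne}, by a different argument using the Stickelberger ideal factorization), and Yun's appendix, which requires $n<\frac{q-1}{2\sqrt q}+1$ and therefore does not cover a fixed $p$ with $n$ an arbitrary prime. Your proposal follows the same Gross--Koblitz/Stickelberger route as the paper's partial results, but it does not close the argument, and you concede as much at the end. The concrete gap is the sentence in which you ``combine'' the digit-sum equalities and the $\Gamma_p$-product identities ``to show that the multiset $\{p^i a_1\}$ equals $\{p^i a_2\}$'': that multiset equality is literally equivalent to the desired conclusion (the multisets are the Frobenius orbits), so nothing has been reduced. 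In the paper, even the much weaker intermediate statement that the base-$p$ \emph{digit} multisets of $a_1$ and $a_2$ agree (Lemma \ref{lm:multi-set-6}) is only established for $n\le 5$, and upgrading digit-multiset equality to equality up to cyclic shift requires the additional invariants $V_m$ of Lemma \ref{Usame} (products of truncated factorials modulo $p^{m+1}$, which go beyond Stickelberger and the reflection formula) together with the consecutivity Lemma \ref{lm:consecutive} and a lengthy case analysis specific to $n=4,5$.

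Two further points show that the missing step is not merely bookkeeping. First, the counterexamples of Section \ref{sec:example-3} ($q=3$, $n\ge 6$ even) exhibit non-equivalent regular characters with identical twisted Gauss sums, so no argument of the kind you sketch can succeed without using the primality of $n$ in an essential way; your sketch invokes primality only to note that the Frobenius orbits have full length, which already follows from regularity alone and is not where primality must enter. Second, deferring the general case to the appendix does not help: Yun's theorem proves orbit equality only in the range $n<\frac{q-1}{2\sqrt q}+1$, so for a fixed prime $p$ it says nothing about large prime $n$, and the conjecture remains open there. As written, your text is an accurate description of the paper's strategy for the verified cases, but it is not a proof of the conjectural statement.
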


In this section, we will verify Conjecture \ref{conj2} for $n\leq 5$.

\subsection{Gross-Koblitz formula}\label{GK}
For the readers' convenience,  we recall Stickelberger's Theorem and Gross-Koblitz formula in this section.
One may refer to \cite{La90} for instance.
In this section, we will prepare our technical tools on the way to prove Theorem \ref{thm:main-5}.

Define the Teichm\"uller character
$$
\omega:\BF_{p^n}^\times\mapsto  \mu_{p^n-1}\text{  by }\omega(u)\equiv u \bmod{\Fp},
$$
where $\Fp$ is a prime ideal in $\BQ(\mu_{p^n-1})$ lying above the prime number $p$.
In fact, the residue class field of modulo
$\Fp$ is identified with $\BF^\times_{p^n}$, 
and there exists an isomorphism between $\mu_{p^n-1}$ and $\BF^\times_{p^n}$.
Moreover, $\omega$ generates the character group of $\BF^\times_{p^n}$.

For an integer $k$, 
define its {\bf $p$-adic expansion  (module $p^n-1$)} by
$$
k\equiv k_0+k_1p+\cdots+k_{n-1}p^{n-1}\bmod{p^n-1}
$$
where $0\leq k_i\leq p-1$. Define
$$
s(k)=k_0+k_1+\cdots+k_{n-1}
\text{ and }
t(k)=k_0!k_1!\cdots k_{n-1}!.$$
Then $s(\cdot)$ and $t(\cdot)$ are $(p^n-1)$-periodical.

\begin{thm}[{Stickelberger's Theorem, \cite[Theorem 1.2.1]{La90}}]\label{Stickel}
For any integer $k$, we have the following congruence:
$$
S(\omega^{-k})\equiv -\frac{(\varepsilon-1)^{s(k)}}{t(k)}\bmod{\FP},
$$
where $\FP$ is a prime ideal lying above $\Fp$ in $\BQ(\mu_{p^n-1},\mu_p)$.
Moreover, $$\ord_\FP S(\omega^{-k})=s(k).$$
\end{thm}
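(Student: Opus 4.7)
The strategy is to expand the Gauss sum as a power series in the uniformizer $\pi := \varepsilon - 1$ (with $v_\FP(\pi) = 1$ and $v_\FP(p) = p - 1$) and extract its leading coefficient. By $(p^n - 1)$-periodicity we assume $0 \le k \le p^n - 2$, with $p$-adic expansion $k = k_0 + k_1 p + \cdots + k_{n-1} p^{n-1}$. For $x \in \BF_{p^n}^\times$, let $\hat x \in \BZ_p[\mu_{p^n - 1}]$ be the Teichm\"uller lift, so $\omega^{-k}(x) = \hat x^{-k}$. Since the Frobenius of the unramified extension $\BQ_p(\mu_{p^n-1})/\BQ_p$ acts on Teichm\"uller lifts by $\hat x \mapsto \hat x^p$, the element $\hat x + \hat x^p + \cdots + \hat x^{p^{n-1}} \in \BZ_p$ reduces modulo $p$ to $\Tr(x)$. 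Because $\varepsilon^p = 1$, the map $N \mapsto (1 + \pi)^N$ on $\BZ_p$ factors through $N \bmod p$ and agrees with $\psi$ on integers, so $\psi(\Tr(x)) = (1 + \pi)^{\hat x + \hat x^p + \cdots + \hat x^{p^{n-1}}}$. Expanding binomially and swapping summations,
\begin{equation*}
S(\omega^{-k}) \;=\; \sum_{j \ge 0} \pi^j\, \Sigma_j, \qquad \Sigma_j \;:=\; \sum_{x \in \BF_{p^n}^\times} \hat x^{-k} \binom{\hat x + \hat x^p + \cdots + \hat x^{p^{n-1}}}{j}.
\end{equation*}

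Writing $\binom{y}{j}$ as a polynomial of degree $j$ in $y$ and applying the multinomial theorem, each $\Sigma_j$ becomes a $\BZ_p$-linear combination of character sums $\sum_{x \in \BF_{p^n}^\times} \hat x^{-k + \sum_i a_i p^i}$ indexed by tuples $(a_0, \ldots, a_{n-1}) \in \BZ_{\ge 0}^n$ with $\sum_i a_i \le j$. Orthogonality on $\mu_{p^n - 1}$ shows that each such sum equals $p^n - 1$ when $\sum_i a_i p^i \equiv k \pmod{p^n - 1}$ and $0$ otherwise. The combinatorial heart of the argument is the claim that every such tuple satisfies $\sum_i a_i \ge s(k)$, with equality iff $a_i = k_i$ for all $i$. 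This follows from a cyclic carrying argument: using $p^n \equiv 1 \pmod{p^n - 1}$, each carry step replaces a digit $a_i \ge p$ by $a_i - p$ and increments $a_{(i+1) \bmod n}$, strictly decreasing $\sum_i a_i$ by $p - 1$ while preserving the residue class, and must terminate at $(k_0, \ldots, k_{n-1})$ when $k \ne 0$.

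Granting the claim, $\Sigma_j = 0$ for $j < s(k)$. For $j = s(k)$ only the unique tuple $(k_i)$ contributes, arising from the top-degree part $y^{s(k)}/s(k)!$ of $\binom{y}{s(k)}$ paired with the multinomial coefficient $\binom{s(k)}{k_0, \ldots, k_{n-1}} = s(k)!/t(k)$, so $\Sigma_{s(k)} = (p^n - 1)/t(k)$. Since $t(k)$ is a $p$-adic unit (a product of factorials of digits less than $p$) and $v_\FP(p^n) = n(p-1) \ge 1$,
\begin{equation*}
S(\omega^{-k}) \;\equiv\; \pi^{s(k)}\cdot\frac{p^n - 1}{t(k)} \;\equiv\; -\frac{\pi^{s(k)}}{t(k)} \pmod{\FP^{s(k)+1}},
\end{equation*}
which yields the stated congruence together with the valuation identity $\ord_\FP S(\omega^{-k}) = s(k)$.

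\textbf{Main obstacle.} The delicate step is the combinatorial digit-sum inequality. Unlike ordinary base-$p$ addition where carries propagate linearly and terminate after finitely many positions, here they wrap cyclically via $p^n \equiv 1$, and one must verify termination at the unique base-$p$ expansion of $k$ (for $k \ne 0$) rather than cycling or admitting another minimal tuple. The case $k = 0$ is handled separately -- both the zero tuple and $(p-1, \ldots, p-1)$ reduce to $0$ modulo $p^n - 1$ -- but $s(0) = 0$ and the formula recovers $S(1) = -1$ directly from $\Sigma_0 = p^n - 1 \equiv -1 \pmod{\FP}$.
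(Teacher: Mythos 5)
This statement is not proved in the paper at all: it is quoted verbatim as Stickelberger's Theorem with a citation to Lang's \emph{Cyclotomic Fields} (Theorem 1.2.1), so there is no internal proof to compare against. Your argument is a correct, self-contained proof and is essentially the classical one: write $\psi(\Tr(x))=(1+\pi)^{T(\hat x)}$ with $\pi=\varepsilon-1$ and $T(\hat x)=\hat x+\hat x^{p}+\cdots+\hat x^{p^{n-1}}\in\BZ_p$, expand binomially, kill terms by orthogonality on $\mu_{p^n-1}$, and use the cyclic-carry lemma (carries wrap via $p^{n}\equiv 1 \bmod{p^{n}-1}$, each carry dropping the digit sum by $p-1$, with unique terminal tuple $(k_0,\dots,k_{n-1})$ for $k\not\equiv 0$) to isolate the leading term $-\pi^{s(k)}/t(k)$; this in fact gives the congruence modulo $\FP^{s(k)+1}$, which is the meaningful form of the statement and immediately yields $\ord_\FP S(\omega^{-k})=s(k)$. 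Two small points to tighten: (i) for $j\ge p$ the coefficients of $\binom{y}{j}$ as a polynomial in $y$ are not in $\BZ_p$, so the phrase ``$\BZ_p$-linear combination of character sums'' is loose; what you actually need is that each $\Sigma_j$ is integral, which follows directly from $\binom{T(\hat x)}{j}\in\BZ_p$ (binomial coefficients of $p$-adic integers are $p$-adic integers), and this integrality should be invoked explicitly to bound the tail $\sum_{j>s(k)}\pi^{j}\Sigma_j$ by $\FP^{s(k)+1}$; (ii) at $j=s(k)$ the factor $1/s(k)!$ (which may have negative valuation) cancels exactly against the multinomial coefficient $s(k)!/t(k)$, so no estimate is needed there --- worth saying in one line. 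With those remarks made explicit, the proof is complete and matches the standard proof found in the cited reference.
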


Recall that for a $p$-adic integer $z$ in $\BZ_p$ (the ring of integers of  $\BQ_p$), the $p$-adic gamma function is defined by 
$$
\Gamma_p(z)=\lim_{m\to z}(-1)^m\prod_{ 0<j<m,~ (p,j)=1}j.
$$

\begin{thm}[{Gross-Koblitz formula, \cite{GrKo79} or  \cite[Theorem 15.4.3]{La90}}]\label{GKformula}
One has
$$
S(\omega^a)=(-1)^n p^n \pi^{-s(a)}\prod^{n-1}_{i=0}\Gamma_p(1-\langle\frac{p^ia}{p^n-1}\rangle),
$$ 
where $\pi$ is an element in the  $p$-adic complex field $\BC_p$ such that $\pi^{p-1}=-p$
and 
 $\apair{t}$ is the smallest non-negative real number in the residue class modulo $  {\BZ} $ for $t\in \BR$.
\end{thm}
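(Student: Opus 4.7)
My plan is to follow Dwork's $p$-adic analytic approach to Gauss sums, which is the proof of Gross--Koblitz presented in \cite{La90}. The core idea is to give an explicit $p$-adic representation of the additive character $\psi$ via Dwork's splitting function, substitute into the Gauss sum, and recognize the surviving power series coefficients as values of Morita's $p$-adic gamma function.

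First, I would fix the $p$-adic realization of $\zeta_p := \psi(1)$. Choose $\pi \in \BC_p$ to be the unique root of $\pi^{p-1} = -p$ normalized so that $\pi \equiv \zeta_p - 1 \pmod{(\zeta_p - 1)^2}$, and define Dwork's splitting function $F(x) = \exp(\pi(x - x^p))$. One verifies that $F$ converges on a closed $p$-adic disk of radius strictly greater than $1$, that $F(1) = \zeta_p$, and that its Taylor coefficients $\gamma_m$ lie in $\pi^m \BZ_p$. Consequently, for any $x \in \BF_{p^n}^\times$ with Teichm\"uller lift $\hat x$,
$$
\psi(\Tr x) = \prod_{i=0}^{n-1} F(\hat x^{p^i}).
$$

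Next, substituting this identity into the definition of $S(\omega^a)$ yields
$$
S(\omega^a) = \sum_{t \in \mu_{p^n-1}} t^a \prod_{i=0}^{n-1} F(t^{p^i}).
$$
Expanding the product as a power series $\sum c_m t^m$ and performing the sum over $\mu_{p^n-1}$ annihilates every monomial whose exponent is not congruent to $-a$ modulo $p^n - 1$. A careful digit analysis shows that the dominant contribution is encoded by the unique $n$-tuple of base-$p$ digits whose cyclic shifts match the sequence $(\langle p^i a/(p^n-1)\rangle)_{i=0}^{n-1}$; the coefficient of this monomial is a product of factorial expressions in the $\gamma_m$'s, carrying an overall factor of $\pi^{-s(a)}$ after bookkeeping. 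I would then rewrite the surviving factorial product using Morita's defining limit
$$
\Gamma_p(z) = \lim_{m \to z} (-1)^m \prod_{\substack{0 < j < m \\ (j,p)=1}} j,
$$
producing $\prod_{i=0}^{n-1} \Gamma_p(1 - \langle p^i a/(p^n-1)\rangle)$. The remaining constants $(-1)^n p^n$ come from combining $|\mu_{p^n-1}| \equiv -1 \pmod p$ with the identity $\pi^{n(p-1)} = (-p)^n$; consistency is checked against Stickelberger's congruence (Theorem~\ref{Stickel}), which pins down both the valuation $v_\pi(S(\omega^a)) = n(p-1) - s(a)$ and the leading $p$-adic unit.

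The main obstacle is the last identification: matching the explicit Dwork coefficients with Morita's $\Gamma_p$ values through its limit definition requires delicate $p$-adic analysis. The Dwork series barely converges on the unit circle, the Frobenius-cyclic structure of the exponents $p^i a \bmod (p^n-1)$ must be matched term by term with the shifts $\langle p^i a/(p^n-1)\rangle$, and Morita's gamma relation arises only after carefully packaging the contributions from each factor $F(t^{p^i})$ and extracting the correct $p$-adic limit of factorial ratios. Once the combinatorics are aligned, the formula falls out as the coefficient extraction of a single digit-tuple, with the prefactors uniquely determined by the valuation match against Stickelberger.
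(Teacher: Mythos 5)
The paper does not prove this statement at all: it is quoted as a known result from \cite{GrKo79} (see also \cite[Theorem 15.4.3]{La90}) and used as a black box, so there is no internal proof to compare against. Your outline follows the standard Dwork splitting-function strategy that underlies the cited proofs, and the ingredients you name are the right ones: the normalization $\pi\equiv\zeta_p-1 \bmod (\zeta_p-1)^2$ tying $\pi$ to $\psi(1)$, the convergence of $F(x)=\exp(\pi(x-x^p))$ on a disk of radius $>1$, the splitting identity $\psi(\Tr x)=\prod_{i=0}^{n-1}F(\hat x^{p^i})$, and the valuation check against Stickelberger (Theorem \ref{Stickel}).

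As a proof, however, the sketch has a genuine gap precisely at the step you defer. After substituting the splitting function and summing over $t\in\mu_{p^n-1}$, you do not isolate the coefficient of a single monomial: the character sum annihilates all exponents except those congruent to $-a$ modulo $p^n-1$, leaving $(p^n-1)\sum_{m\equiv -a\,(p^n-1)}c_m$, an \emph{infinite} $p$-adic series of Dwork coefficients. Converting that series into $(-1)^np^n\pi^{-s(a)}\prod_{i}\Gamma_p(1-\langle p^ia/(p^n-1)\rangle)$ is the actual content of the theorem, not bookkeeping; in \cite{La90} it is achieved by realizing the Gauss sum as a Frobenius eigenvalue via Dwork's trace formula on a $p$-adic Banach space (the material leading up to Theorem 15.4.3), and in \cite{GrKo79} by a different, equally nontrivial route --- in neither source is it a direct ``extraction of a single digit-tuple.'' Two further inaccuracies: the coefficients $\gamma_m$ of $F$ need not lie in $\pi^m\BZ_p$ (the standard estimate is $\ord_p\gamma_m\ge m(p-1)/p^2$, which is exactly what gives convergence beyond the unit disk and also why the tail of the series is not negligible); and Stickelberger's congruence only determines the $\FP$-valuation and the leading unit modulo $\FP$, so it cannot ``pin down'' the exact constant $(-1)^np^n\pi^{-s(a)}$ --- that constant must come out of the eigenvalue (or equivalent) computation itself. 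Since the theorem is legitimately cited, quoting it is fine; but the proposal as written would not compile into a complete proof without supplying this central identification.
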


\begin{lem}\label{lm:central-character}
If $S(\omega^{-\alpha})=S(\omega^{-\beta})$,
then $\omega^{-\alpha}$ and $\omega^{-\beta}$ admit the same restriction to $\BF^\times_p$.
\end{lem}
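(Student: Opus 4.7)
The key input is Stickelberger's Theorem (Theorem \ref{Stickel}), which pins down the $\FP$-adic valuation of the Gauss sum: $\ord_\FP S(\omega^{-k}) = s(k)$. My plan is to extract $\alpha \equiv \beta \pmod{p-1}$ from the equality of Gauss sums, and then observe that the restriction $\omega|_{\BF_p^\times}$ is an isomorphism onto $\mu_{p-1}$, so characters of the form $\omega^{-\alpha}$ and $\omega^{-\beta}$ have equal restriction to $\BF_p^\times$ precisely when $\alpha \equiv \beta \pmod{p-1}$.

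First, taking $\FP$-adic valuations on both sides of $S(\omega^{-\alpha}) = S(\omega^{-\beta})$ and applying Stickelberger's Theorem, I get $s(\alpha) = s(\beta)$. (If one of $\omega^{-\alpha},\ \omega^{-\beta}$ is trivial, the equality of Gauss sums forces the other to be trivial as well by the archimedean size $|S(\chi)| = p^{n/2}$ of nontrivial Gauss sums, and the conclusion is vacuous.)

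Next, I use the elementary but crucial congruence $s(k) \equiv k \pmod{p-1}$, which follows immediately from $p \equiv 1 \pmod{p-1}$ applied to the $p$-adic expansion $k \equiv k_0 + k_1 p + \cdots + k_{n-1} p^{n-1} \pmod{p^n-1}$. Combining with $s(\alpha) = s(\beta)$ gives
\[
\alpha \equiv s(\alpha) = s(\beta) \equiv \beta \pmod{p-1}.
\]
Since $\omega|_{\BF_p^\times}:\BF_p^\times \to \mu_{p-1}$ is an isomorphism (it is the Teichmüller lift on $\BF_p^\times$), the character $\omega^{-\alpha}|_{\BF_p^\times}$ depends only on the class of $\alpha$ modulo $p-1$, which yields the desired equality of restrictions.

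There is no serious obstacle here; the lemma is a direct consequence of the valuation information in Stickelberger's Theorem combined with the identity $s(k) \equiv k \pmod{p-1}$. The only care needed is to rule out (or handle uniformly) the trivial character case, which is immediate from size considerations of Gauss sums.
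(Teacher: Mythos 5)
Your argument is correct and is essentially the paper's own proof: both extract $s(\alpha)=s(\beta)$ from the $\FP$-adic valuation in Stickelberger's Theorem, use $k\equiv s(k)\bmod{p-1}$ to get $\alpha\equiv\beta\bmod{p-1}$, and note that $\omega^{-\alpha}|_{\BF_p^\times}$ depends only on $\alpha$ modulo $p-1$ (the paper phrases this last step via the generator $g^{\frac{p^n-1}{p-1}}$ of $\BF_p^\times$ rather than via the Teichm\"uller restriction, but it is the same observation). Your extra remark about the trivial-character case is harmless and not needed in the paper, where the characters are assumed regular.
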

\begin{proof}
Let $g$ be a generator of the multiplicative group $\BF^\times_{p^n}$.
Then $g^{\frac{p^n-1}{p-1}}$ is a generator of $\BF^\times_p$.
It is enough to show that $\omega^{-\alpha}(g^{\frac{p^n-1}{p-1}})=\omega^{-\beta}(g^{\frac{p^n-1}{p-1}})$.

Note 
\begin{align*}
\alpha\equiv\beta\bmod{p-1}\iff
 & \alpha\frac{p^n-1}{p-1}\equiv \beta\frac{p^n-1}{p-1}\bmod{p^n-1}\\
\iff&\omega^{-\alpha}(g^{\frac{p^n-1}{p-1}})=\omega^{-\beta}(g^{\frac{p^n-1}{p-1}}).
\end{align*}
Let $\alpha\equiv(\alpha_1,\dots,\alpha_n)$ and $\beta\equiv (\beta_1,\dots,\beta_n)$ be the $p$-adic expansions. 
Then $\alpha\equiv \sum^{n}_{i=1}\alpha_i=s(\alpha)\bmod{p-1}$ and $\beta\equiv s(\beta)\bmod{p-1}$.
Since $S(\omega^{-\alpha})=S(\omega^{-\beta})$,  $s(\alpha)=s(\beta)$ by Theorem \ref{Stickel} and we have $\alpha\equiv\beta\bmod{p-1}$. It completes the proof of this Lemma.
\end{proof}

For a character $\chi$ of $\BF_p^\times$, we have 
$\omega^{-\alpha}\otimes(\chi\circ\Nr_{n:1})=\omega^{-(\alpha+\hat{k})}$
for some $0\leq k< p-1$, where $\hat{k}=k\cdot \frac{p^n-1}{p-1}$.
Now we rewrite Conjecture \ref{conj2} as follows.
\begin{conj}\label{conj:main}
Suppose that $\omega^{-\alpha}$ and $\omega^{-\beta}$ are regular characters of $\BF_{p^n}^\times$ and $n$ is a prime.
If
\begin{equation}\label{eq:S-twist}
S(\omega^{-(\alpha+\hat k)})=S(\omega^{-(\beta+\hat k)})
\end{equation}
for all $0\leq k< p-1$, then
$\alpha\equiv p^{j}\beta\bmod{p^n-1}$ for some $j$.
\end{conj}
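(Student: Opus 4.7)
The plan is to extract, for each twist $k \in \{0,1,\dots,p-2\}$, two layers of arithmetic information from \eqref{eq:S-twist}: a coarse layer from Stickelberger's theorem (Theorem \ref{Stickel}) and a fine layer from the Gross-Koblitz formula (Theorem \ref{GKformula}). Comparing $\FP$-adic valuations on both sides of $S(\omega^{-(\alpha+\hat k)})=S(\omega^{-(\beta+\hat k)})$ gives $s(\alpha+\hat k)=s(\beta+\hat k)$ for every $k$, and the leading $\FP$-adic congruence $S(\omega^{-x})\equiv -(\varepsilon-1)^{s(x)}/t(x) \pmod{\FP}$ then forces $t(\alpha+\hat k)\equiv t(\beta+\hat k) \pmod p$. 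These are the starting data.

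The next step is to translate the digit-sum equalities into a carry-counting statement. Writing $\alpha+\hat k$ in base $p$ amounts to adding $k$ to each base-$p$ digit of $\alpha$ and propagating carries cyclically (since $p^n \equiv 1 \pmod{p^n-1}$), and each such carry reduces the digit sum by $p-1$. Hence
$$s(\alpha+\hat k) = nk + s(\alpha) - (p-1)\,N_k(\alpha),$$
where $N_k(\alpha)$ counts the cyclic carry positions. The combinatorial crux is that $N_k(\alpha)$ depends on the \emph{cyclic arrangement} of $(\alpha_0,\dots,\alpha_{n-1})$, not just the multiset of digits, because the carry variables satisfy the cyclic fixed-point relation $c_i = \mathbf{1}\{\alpha_i + k + c_{i-1} \geq p\}$ (indices mod $n$). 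For $n$ prime and $n \leq 5$, I would verify by a finite case analysis that the family $(N_k(\alpha))_{k=0}^{p-2}$, together with the digit-multiset data coming from $t$, forces $(\beta_0,\dots,\beta_{n-1})$ to be a cyclic rotation of $(\alpha_0,\dots,\alpha_{n-1})$, which is precisely $\alpha \equiv p^j\beta \pmod{p^n-1}$.

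When the carry-count analysis leaves residual ambiguity (repeated digits, or non-Frobenius-equivalent arrangements that happen to share carry profiles), I would refine using the Gross-Koblitz formula
$$S(\omega^{-(\alpha+\hat k)}) = (-1)^n p^n\,\pi^{-s(\alpha+\hat k)}\prod_{i=0}^{n-1}\Gamma_p\!\left(1 - \apair{\tfrac{-p^i(\alpha+\hat k)}{p^n-1}}\right).$$
A key simplification is that twisting by $\hat k = k(p^n-1)/(p-1)$ shifts every one of the $n$ arguments of $\Gamma_p$ by the \emph{common} amount $k/(p-1)$ modulo $1$. Using non-vanishing of $\Gamma_p$ and its functional equation $\Gamma_p(x+1)=-x\,\Gamma_p(x)$ for $p$-adic units $x$, equality of these products for all $k$ should force equality of the multisets of $\Gamma_p$-arguments, i.e., of the Frobenius orbits of $-\alpha/(p^n-1)$ and $-\beta/(p^n-1)$ modulo $1$, which is exactly the Frobenius conjugacy of $\alpha$ and $\beta$.

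The main obstacle I anticipate is the combinatorial step: the cyclic carry relation is a fixed-point equation, and two non-Frobenius-equivalent digit sequences can accidentally share $N_k$ for any individual $k$. The bound $n \leq 5$ makes it feasible to enumerate the possible cyclic sequences and check that no such coincidence persists across all $k$ simultaneously, but a uniform structural argument for larger $n$ eludes this route --- consistent with the fact that the unconditional statement at larger $n$ in the paper is proved through the appendix's geometric perspective rather than this $p$-adic combinatorial route.
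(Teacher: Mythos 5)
The statement you set out to prove is a conjecture: the paper itself does not prove it for general prime $n$, but only verifies it for $n\le 5$ over prime fields (Propositions \ref{n=4} and \ref{n=5}), for $p=2$ when $2^n-1$ is a Mersenne prime (Proposition \ref{prop:2-Mersenne}), and, via the appendix, under the restriction $n<\frac{q-1}{2\sqrt q}+1$ (Theorem \ref{th:G}). For the cases you do address, your route is essentially the paper's: Stickelberger (Theorem \ref{Stickel}) gives $s(\alpha+\hat k)=s(\beta+\hat k)$ and $t(\alpha+\hat k)\equiv t(\beta+\hat k)\bmod p$ (cf.\ Lemma \ref{sandt}); your cyclic carry-count identity $s(\alpha+\hat k)=nk+s(\alpha)-(p-1)N_k(\alpha)$ is exactly Eq.\ \eqref{eq:alpha+p-a}, with the carries encoded there by the directed graph $\CG_a(\alpha)^\circ$ and $v_a(\alpha)$ (for $a=p-k$); and the Gross--Koblitz refinement is Lemma \ref{Usame}, which extracts the truncated-factorial invariants $V_m$ modulo $p^{m+1}$.

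Two caveats. First, your assertion that equality of the twisted $\Gamma_p$-products for all $k$, via non-vanishing and the functional equation of $\Gamma_p$, ``should force equality of the multisets of $\Gamma_p$-arguments'' is not justified and is stronger than what the method delivers: if that implication held it would settle the whole conjecture, whereas the paper only obtains congruence information (the $V_m$'s) and must combine it with Lemmas \ref{r1}, \ref{lm:multi-set-6} and \ref{lm:consecutive} in an exhaustive case analysis. Second, that case analysis is the actual content of the paper's verification for $n=4,5$, and in your sketch it is deferred (``I would verify by a finite case analysis''); so as written the proposal establishes neither the general conjecture nor its known cases, but is a plan which, when executed, would reproduce Propositions \ref{n=4} and \ref{n=5}. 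You also do not touch the paper's genuinely different argument in the Mersenne case, which uses the full prime-ideal factorization of the Gauss sum over all Galois conjugates, Eq.\ \eqref{eq:factorization}, rather than digit combinatorics; that route is worth knowing since it bypasses the carry analysis entirely when $p^n-1$ is prime.
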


For positive integers $n$ and $\alpha$,
denote the $p$-adic expansion of $\alpha$ by
$$
\alpha\equiv (\alpha_1,\dots,\alpha_n),
$$
when $ \alpha\equiv \sum_{i=1}^{n}\alpha_i p^{i-1}\bmod{p^n-1},$ where $0\le \alpha_i\le p-1$ for all $i$.
For any integer $j$, define $\alpha_j=\alpha_i$ for  the unique $1\leq i\leq n$ such that $j\equiv i\bmod{n}$. 
Define
$$\|\alpha\|:=
(\|\alpha\|_1,\|\alpha\|_2,\|\alpha\|_3,\dots, \|\alpha\|_{r(\alpha)} )
$$
where $\|\alpha\|_i$ is the $i$-th largest  element and $r(\alpha)$ is the number of distinct elements in   $\{\alpha_1,\dots,\alpha_n\}$ respectively.
Denote $$m_i(\alpha)=\#\{1\le j\le n\ |\ \alpha_j=\|\alpha\|_i\} $$ and
$m(\alpha)=(m_1(\alpha),\cdots, m_{r(\alpha)}(\alpha)).$
 
\begin{defn}
We call  $\omega^{-\alpha}$ (resp.~$\alpha$) and  $ \omega^{-\beta} $ (resp.~$\beta$)  {\bf distinguishable}  if $S(\omega^{-(\alpha+\hat k)})\ne S(\omega^{-(\beta+\hat k)})$ for some $0\leq k< p-1.$ 
We say that $\omega^{-\alpha}$ is {\bf distinguishable} if $\omega^{-\alpha}$ is distinguishable from any regular character $\omega^{-\beta} $. 
In this case, we also call $\alpha$ distinguishable. 
 
We call a collection of sets $S_i,$ consisting of non-equivalent regular characters of $\BF^\times_{p^n}$, {\bf distinguishable sets}, if $\alpha$ and $\beta$ are distinguishable whenever $\alpha\in S_j$ and $\beta\in S_k$ for any $j\ne k.$ 
\end{defn}

\begin{rmk}\label{rmk:distinguishable}
By Green's construction, $\omega^{-\alpha}$ and $\omega^{-\beta}$ are equivalent, if and only if they corresponds to the same cuspidal representation of $\GL_n(\BF_q)$. 
Note that $S(\chi^{p^j})=S(\chi)$ for any character $\chi$ of $\BF^{\times}_{p^n}$  
and the $p$-adic expansion of $p^j\alpha$
\begin{equation}\label{eq:shift}
p^j\alpha\equiv (\alpha_{1+j},\alpha_{2+j},\dots,\alpha_{n+j}) \bmod{p^n-1}.	
\end{equation}
Now, to verify Conjecture \ref{conj:main} it suffices to show that all non-equivalent regular characters are distinguishable. 
\end{rmk}

In the following lemmas, we assume that    $\omega^{-\alpha}$ and $\omega^{-\beta}$ are regular characters of $\BF^\times_{p^n}$.   Let
$$\alpha\equiv (\alpha_1,\cdots,\alpha_{n}) \text{ and }  \beta\equiv (\beta_1,\cdots,\beta_{n}).$$

\begin{lm} \label{sandt}\label{symm}
Suppose that  $\omega^{-\alpha}$ and $\omega^{-\beta}$ are regular.
If $S(\omega^\alpha)=S(\omega^\beta)$,
then we have
\begin{enumerate}
	\item  $s(\alpha)=s(\beta)$ and $t(\alpha)\equiv t(\beta)\bmod{p}$;
	\item $S(\omega^{-\alpha})=S(\omega^{-\beta})$.
\end{enumerate}
\end{lm}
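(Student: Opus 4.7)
The plan is to combine three standard tools: the Gross--Koblitz formula (Theorem~\ref{GKformula}) to read off $\FP$-adic valuations of Gauss sums, Stickelberger's congruence (Theorem~\ref{Stickel}) to pin down the leading coefficient modulo $\FP$, and the classical Gauss sum norm identity $S(\chi)S(\chi^{-1}) = \chi(-1)\,p^n$ for nontrivial $\chi$, which will bridge the two parts of the lemma.

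For the $s$-part of (1), I would apply Gross--Koblitz directly. Since $\ord_\FP(\pi) = 1$ (from $\pi^{p-1} = -p$ and the fact that $\ord_\FP(p) = p-1$, as ramification in $\BQ(\mu_{p^n-1},\mu_p)/\BQ_p$ comes solely from $\BQ_p(\mu_p)$), since $\ord_\FP(p^n) = n(p-1)$, and since $\Gamma_p$ takes unit values on $\BZ_p$, Theorem~\ref{GKformula} yields
\begin{equation*}
\ord_\FP S(\omega^a) \;=\; n(p-1) - s(a),
\end{equation*}
valid for $a \not\equiv 0 \bmod{p^n-1}$, which is guaranteed by the regularity of $\omega^{-\alpha}$ and $\omega^{-\beta}$. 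Comparing the valuations of the two sides of $S(\omega^\alpha)=S(\omega^\beta)$ then forces $s(\alpha) = s(\beta)$. For (2), observe that $\alpha \equiv s(\alpha) = s(\beta) \equiv \beta \bmod{p-1}$, so $\omega^\alpha$ and $\omega^\beta$ agree on $\BF_p^\times$; in particular $\omega^\alpha(-1) = \omega^\beta(-1)$ (trivially when $p=2$, and from $2 \mid p-1$ when $p>2$). Both Gauss sums are nonzero by regularity, so dividing the norm identity for $\omega^\alpha$ by that for $\omega^\beta$ gives $S(\omega^{-\alpha}) = S(\omega^{-\beta})$.

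Finally, for the $t$-part of (1), I would feed the equality obtained in (2) into Stickelberger's congruence. Reading Theorem~\ref{Stickel} in its natural sharp form $S(\omega^{-k})/(\varepsilon-1)^{s(k)} \equiv -1/t(k) \bmod{\FP}$ (both sides have $\FP$-valuation exactly $s(k)$, and $t(k)$ is coprime to $p$ as a product of factorials of integers strictly less than $p$, hence invertible mod $\FP$), the equality $S(\omega^{-\alpha}) = S(\omega^{-\beta})$ combined with $s(\alpha)=s(\beta)$ immediately yields $1/t(\alpha) \equiv 1/t(\beta) \bmod{\FP}$. Since $t(\alpha), t(\beta) \in \BZ$ and $\FP \cap \BZ = (p)$, we conclude $t(\alpha) \equiv t(\beta) \bmod{p}$. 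The only real subtlety in the whole argument is the correct interpretation of the Stickelberger congruence as a statement about the $\FP$-leading term (the congruence modulo $\FP$ as literally printed is vacuous since $(\varepsilon-1)^{s(k)} \in \FP^{s(k)}$); once that is clarified, the proof is an essentially formal chain.
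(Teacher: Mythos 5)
Your proof is correct and follows essentially the same route as the paper: Stickelberger's theorem (valuation plus the sharp form of the congruence) yields part (1), and part (2) comes from a standard Gauss-sum reflection identity combined with the fact that $\omega^{\alpha}$ and $\omega^{\beta}$ agree on $\BF_p^\times$ (the content of Lemma \ref{lm:central-character}), so that $\omega^{\alpha}(-1)=\omega^{\beta}(-1)$. The only cosmetic differences are that you invoke Gross--Koblitz instead of the valuation clause of Theorem \ref{Stickel}, use the norm relation $S(\chi)S(\chi^{-1})=\chi(-1)p^n$ in place of the paper's conjugation identity $S(\omega^{-\alpha})=\omega^{\alpha}(-1)\overline{S(\omega^{\alpha})}$, and derive the congruence $t(\alpha)\equiv t(\beta)\bmod p$ after part (2), which cleanly sidesteps the digit-complement bookkeeping the paper's ordering would otherwise require.
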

\begin{proof}
Part (1) follows from Stickelberger's Theorem in Theorem \ref{Stickel}.

Referring to \cite[{\bf GS2} Section 1.1]{La90}, one has
$S(\omega^{-\alpha})= \omega^\alpha(-1)\overline{S(\omega^\alpha)}$.
By Lemma \ref{lm:central-character},  $\omega^{-\alpha}$ and $\omega^{-\beta}$ admit the same restriction to $\BF^\times_{p}.$
It follows that  $S(\omega^{-\alpha})=S(\omega^{-\beta})$ if and only if $S(\omega^{\alpha})=S(\omega^{\beta})$.
\end{proof}

To describe the $p$-adic expansion of $\alpha+\widehat{p-a}$, we introduce the following directed graph of the $p$-adic expansion modulo $p^n-1$.
\begin{defn}[The $a$-th directed Graph of $p$-adic expansions]
Let $\alpha\equiv(\alpha_1,\alpha_2,\dots,\alpha_n)$ be the $p$-adic expansion modulo $p^n-1$.
For any $1\leq a\leq p-1$, 
we define the $a$-th directed graph $\CG_a(\alpha)$ of $\alpha$ as follows: for  $1\le k\le n$, each $\alpha_k$ is a vertex; if $\alpha_k<a-1$, then there is no edge connected to $\alpha_k$;
if $\alpha_k\geq a$ and $\alpha_{k+1}\geq a-1$, then we assign a directed edge from $\alpha_k$ to $\alpha_{k+1}$, denoted by $\alpha_k\stackrel{a}{\rightarrow} \alpha_{k+1}$;
if $\alpha_k=\alpha_{k+1}=a-1$, then we assign $\alpha_k\stackrel{a}{\rightarrow} \alpha_{k+1}$ only when  $\alpha_{k-1}\stackrel{a}{\rightarrow} \alpha_k$.
\end{defn}

Let $\CG_{a}(\alpha)^\circ$ denote the subgraph of $\CG_{a}(\alpha)$, which consists of connected components   containing at least one vertex  $\geq a$.
Denote by $v_a(\alpha)$  the numbers of vertices  in $\CG_{a}(\alpha)^\circ$.

\begin{lem}
For $\alpha\equiv (\alpha_1,\alpha_2,\dots,\alpha_n)$ and any $1\leq a\leq p-1$, we have
\begin{equation}\label{eq:alpha+p-a}
s(\alpha+\widehat{p-a})=s(\alpha)+(p-a)n-v_a(\alpha)(p-1).	
\end{equation}
\end{lem}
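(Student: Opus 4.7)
The idea is to reduce Eq.~\eqref{eq:alpha+p-a} to counting carries in the cyclic base-$p$ addition of $\widehat{p-a}$ to $\alpha$ modulo $p^n-1$. Since $\widehat{p-a}=(p-a)(1+p+\cdots+p^{n-1})$ has $p$-adic expansion $(p-a,p-a,\dots,p-a)$ and $p^n\equiv 1\pmod{p^n-1}$, one simply adds $p-a$ to each digit $\alpha_k$ while propagating carries cyclically around $\{1,\dots,n\}$. Writing $c_k\in\{0,1\}$ for the outgoing carry at position $k$ (with the cyclic convention $c_0=c_n$), the per-digit identity is $\alpha_k+(p-a)+c_{k-1}=d_k+pc_k$ for some $d_k\in\{0,\dots,p-1\}$. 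Summing over $k=1,\dots,n$ and using $\sum_k c_{k-1}=\sum_k c_k=:C$ yields
\begin{equation*}
s(\alpha+\widehat{p-a})=s(\alpha)+(p-a)n-(p-1)C,
\end{equation*}
so the lemma reduces to the combinatorial identity $C=v_a(\alpha)$.

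The next step is to characterise the set of carrying positions. Position $k$ carries iff either $\alpha_k\geq a$, or $\alpha_k=a-1$ and position $k-1$ carries, since $(a-1)+(p-a)+1=p$ whereas $(a-2)+(p-a)+1=p-1<p$. Iterating this recursion, $k$ carries precisely when there is a cyclic run $j,j+1,\dots,k$ with $\alpha_j\geq a$ and $\alpha_{j+1}=\cdots=\alpha_{k-1}=a-1$ (allowing $k=j$). These runs are exactly the chains encoded by $\CG_a(\alpha)^\circ$: the first edge rule creates an edge out of every source $\alpha_j\geq a$ to any neighbor with value $\geq a-1$, and the second rule propagates the edge through consecutive $a-1$'s only when a preceding edge is already present, ensuring no spurious component arises without a source.

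Finally, one verifies that the carrying positions coincide with the vertex set of $\CG_a(\alpha)^\circ$. The easy direction is immediate: the trace-back above produces an edge chain $\alpha_j\to\alpha_{j+1}\to\cdots\to\alpha_k$ that places $\alpha_k$ in a connected component containing the source $\alpha_j\geq a$. The step I expect to require the most care is the converse, especially in the cyclic setting where an undirected path in $\CG_a$ could in principle connect a source to a non-carrying vertex by crossing the wrap-around or traversing an edge backwards. The key observation is that the ``starting'' vertex $\alpha_m$ of any maximal arc in $\CG_a$ must itself satisfy $\alpha_m\geq a$ (otherwise the second rule would require a non-existent preceding edge), and any edge $\alpha_{k-1}\to\alpha_k$ whose head satisfies $\alpha_k\geq a$ must come from the first rule, forcing $\alpha_{k-1}\geq a$ as well. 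Thus every backward traversal stays among sources (which carry trivially), while every forward traversal inherits a carry by induction along the arc. This yields $C=v_a(\alpha)$ and completes the proof.
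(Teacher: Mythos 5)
Your proof is correct and takes essentially the same route as the paper: both analyze the base-$p$ addition of $\widehat{p-a}$ digit by digit and use the connected components of $\CG_a(\alpha)^\circ$ to locate exactly where the digits drop. Your telescoping carry identity, summing $\alpha_k+(p-a)+c_{k-1}=d_k+pc_k$ over $k$ and then identifying the carrying positions with the vertices of $\CG_a(\alpha)^\circ$, is just a cleaner bookkeeping of the paper's explicit per-digit computation (which records the extra $+1$ at the position following each component and the extra $-1$ at its leftmost vertex instead).
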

\begin{proof}
If $\CG_a(\alpha)^\circ=\{\alpha_i\ | \ i\in I\}$ is connected, then $\alpha_i\geq a-1$ and $(\alpha+\widehat{p-a})_i=\alpha_i-a+1$ for all $i\in I$.
It follows that $s(\alpha+\widehat{p-a})= s(\alpha)-n(a-1)$ and Eq. \eqref{eq:alpha+p-a} holds in this case.

Assume that $\CG_a(\alpha)^\circ$ is not connected.
For each connected component in $\CG_{a}(\alpha)^\circ$
\begin{equation}\label{eq:CG-connected-comp}
\alpha_i\stackrel{a}{\rightarrow} \alpha_{i+1}\stackrel{a}{\rightarrow}\cdots \stackrel{a}{\rightarrow} \alpha_{i+j-1};
\end{equation}
 $\alpha_i\geq a$; $\alpha_{i+j}<a-1$, and either $\alpha_{i-1}<a-1$ or $\alpha_{i-j}=a-1,$ for $j=1,\cdots, \ i_0,$ and $\alpha_{i-i_0-1}\le a-2$ for some $i_0\ge 1$.
It follows that if $\alpha_k\notin\CG_{a}(\alpha)^\circ$, $(\alpha+\widehat{p-a})_k=\alpha_k+p-a$ or $\alpha_k+p-a+1$. The latter case happens when $\alpha_k=\alpha_{i+j}$ is next to some connected component \eqref{eq:CG-connected-comp} on the right end.
If $\alpha_k\in\CG_{a}(\alpha)^\circ$, then $(\alpha+\widehat{p-a})_k=\alpha_k-a+1\geq 0$  unless $\alpha_k=\alpha_i$ is the leftmost vertex of the connected component \eqref{eq:CG-connected-comp},
in which case $(\alpha+\widehat{p-a})_i=\alpha-a\geq 0$.
Overall, we have 
\begin{align*}
s(\alpha+\widehat{p-a})=&s(\alpha)+(p-a)(n-v_a(\alpha))-(a-1)v_a(\alpha)\\
=&s(\alpha)+(p-a)n-(p-1)v_a(\alpha),
\end{align*}
which completes the proof this lemma.
\end{proof}

\begin{lem}\label{r1} \label{ru}
Let  $\omega^{-\alpha}$ and $\omega^{-\beta}$ be regular.
If $S(\omega^{-(\alpha+\hat{k})})=S(\omega^{-(\beta+\hat{k})})$ for all $0\leq k< p-1$, then $\|\alpha\|_1=\|\beta\|_1$  and $\|\beta\|_{r(\beta)}=\|\alpha\|_{r(\alpha)}$.
\end{lem}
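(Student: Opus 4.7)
The plan is to convert the Gauss-sum identities into digit-sum identities via Stickelberger's theorem and then read off the extremal digits of $\alpha$ and $\beta$ from the combinatorial formula~\eqref{eq:alpha+p-a}. Since $\ord_\FP S(\omega^{-\gamma})=s(\gamma)$ by Theorem~\ref{Stickel}, the hypothesis gives $s(\alpha+\hat k)=s(\beta+\hat k)$ for all $0\le k<p-1$. The case $k=0$ yields $s(\alpha)=s(\beta)$; writing $k=p-a$ with $2\le a\le p-1$ and subtracting~\eqref{eq:alpha+p-a} for $\alpha$ from the same identity for $\beta$ then forces
\[
v_a(\alpha)=v_a(\beta)\qquad\text{for all } 2\le a\le p-1.
\]
(The case $a=1$ reduces to $k=p-1\equiv 0 \bmod{p^n-1}$ and merely recovers the trivial identity $v_1=n$.)

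For $\|\alpha\|_1=\|\beta\|_1$, I would argue by contradiction: suppose $M:=\|\alpha\|_1>\|\beta\|_1$. Regularity of $\omega^{-\beta}$ prevents $\|\beta\|_1=0$, so $M\ge 2$ and $a=M\in[2,p-1]$. The vertex of $\alpha$ with value $M$ is a high vertex of $\CG_M(\alpha)$, so $v_M(\alpha)\ge 1$; on the other hand no digit of $\beta$ reaches $M$, hence $\CG_M(\beta)^\circ=\emptyset$ and $v_M(\beta)=0$, contradicting the displayed equality.

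For $\|\alpha\|_{r(\alpha)}=\|\beta\|_{r(\beta)}$, again argue by contradiction: suppose $m:=\|\alpha\|_{r(\alpha)}<m':=\|\beta\|_{r(\beta)}$. Regularity of $\omega^{-\beta}$ excludes $m'=p-1$, so $m\le p-3$ and $a:=m+2\in[2,p-1]$. The minimum vertex of $\alpha$ has value $m=a-2<a-1$, hence is isolated in $\CG_a(\alpha)$ and excluded from $\CG_a(\alpha)^\circ$, so $v_a(\alpha)\le n-1$. For $\beta$, every digit is $\ge m'\ge a-1$, so there is no low vertex; regularity furnishes some high vertex, whose contribution to the carry cascade in $\beta+\widehat{p-a}$ propagates through mid and high vertices without interruption around the entire cycle. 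Applying~\eqref{eq:alpha+p-a} to $\beta$ therefore gives $v_a(\beta)=n$, contradicting $v_a(\alpha)=v_a(\beta)$. The case $m>m'$ is symmetric.

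The main obstacle is the minimum statement: whereas the maximum is detected immediately by a single extremal vertex of $\CG_a$, the minimum must be detected indirectly through the \emph{cut} that an isolated low vertex inflicts on the cyclic graph. Justifying that this cut strictly decreases $v_a(\alpha)$, while the mid-vertex configuration of $\beta$ does not produce any such drop, is most cleanly handled through the carry-propagation reading of~\eqref{eq:alpha+p-a} itself.
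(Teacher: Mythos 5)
Your proof is correct, and for half of the statement it takes a genuinely different route from the paper. For the maximum the two arguments are essentially the same: the paper compares $s(\alpha+\widehat{p-\|\alpha\|_1})$ with $s(\beta+\widehat{p-\|\alpha\|_1})$ directly, while you first extract from Theorem \ref{Stickel} and Eq.~\eqref{eq:alpha+p-a} the cleaner intermediate statement $v_a(\alpha)=v_a(\beta)$ for all $2\le a\le p-1$ and then note $v_M(\beta)=0<v_M(\alpha)$ for $M=\|\alpha\|_1$; this repackaging is harmless and in fact convenient. The real divergence is the minimum: the paper does not argue directly but invokes the conjugation symmetry $S(\omega^{-\gamma})=\omega^{\gamma}(-1)\overline{S(\omega^{\gamma})}$ of Lemma \ref{symm} (whose proof also needs Lemma \ref{lm:central-character} to match the signs) to transfer the hypothesis to $-\alpha,-\beta$ and then reuses the maximum case, since $\|-\gamma\|_1=p-1-\|\gamma\|_{r(\gamma)}$. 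You instead detect the minimum directly at level $a=m+2$ with $m=\|\alpha\|_{r(\alpha)}$: the isolated low vertex of $\alpha$ gives $v_a(\alpha)\le n-1$, whereas all digits of $\beta$ are $\ge a-1$ and regularity excludes the constant string $(a-1,\dots,a-1)$ (it would factor through the norm), so $\beta$ has a digit $\ge a$ and the edge rules then connect every vertex to it, giving $v_a(\beta)=n$ and the desired contradiction with $v_a(\alpha)=v_a(\beta)$. Two small points should be written out if you keep this route: the norm argument producing a vertex $\ge a$ for $\beta$, and the explicit edge-tracing that yields $v_a(\beta)=n$ --- note that Eq.~\eqref{eq:alpha+p-a} does not by itself give $v_a(\beta)=n$; it only translates that graph-theoretic fact into the digit-sum identity. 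What your route buys is independence from the duality relation and the central-character lemma; what the paper's dualization buys is brevity, since the maximum case is reused verbatim. Both treatments dispose of the boundary cases ($\|\beta\|_1=0$, minimum equal to $p-1$, small $p$) in the same way, via regularity.
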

\begin{proof}
Since $S(\omega^{-(\alpha+\hat{k})})=S(\omega^{-\beta+\hat{k}})$ for $0\leq k< p-1$, we have
$s(\alpha+\hat{k})=s(\beta+\hat{k})$ for all $1
\leq k<p-1$.
Assume on the contrast that $\|\alpha\|_1\ne  \|\beta\|_1$.
By symmetry, it suffices to consider the case $\|\alpha\|_1>  \|\beta\|_1$. 
Take $k=p-\|\alpha\|_1$ and then $1\leq k<p-1$. 
As $\|\alpha\|_1> \|\beta\|_1$, $0<\beta_i+p-\|\alpha\|_1\leq p-1$
and 
$$
\beta+\hat{k}\equiv (\beta_1+p-\|\alpha\|_1,\beta_2+p-\|\alpha\|_1,\dots, \beta_n+p-\|\alpha\|_1).
$$
Hence $s(\beta)+nk=s(\beta+\hat k)$.
By Eq. \eqref{eq:alpha+p-a}, one has 
$$s(\alpha+\hat k)= s(\alpha)+kn-v_{\|\alpha\|_1}(\alpha)(p-1).$$
Since $\{\alpha_j\mid \alpha_j=\|\alpha\|_1\}\subset \CG_{\|\alpha\|_1}(\alpha)^\circ$,   $v_{\|\alpha\|_1}(\alpha)\geq m_1(\alpha)>0$.
Then 
$$s(\alpha+\hat k)\le s(\alpha)+kn-v_{\|\alpha\|_1}(\alpha)(p-1)<s(\beta)+kn=s(\beta+\hat k).$$ 
It contradicts to the fact that $s(\alpha+\hat k)=s(\beta +\hat k) $ and $s(\alpha)=s(\beta)$ following Lemma~\ref{sandt}.
Thus $\|\alpha\|_1=\|\beta\|_1$.

Next, by Lemma \ref{symm}, we have $S(\omega^{-(-\alpha+\hat k)})=S(\omega^{-(-\beta+\hat k)})$ for all $0\le k< p-1$. 
As $-\alpha\equiv (p-1-\alpha_1,\dots, p-1-\alpha_n)$ and $-\beta\equiv (p-1-\beta_1,\dots, p-1-\beta_n)$, we have $\|\alpha\|_{r(\alpha)}=\|\beta\|_{r(\beta)}$ due to $\|-\alpha\|_1=\|-\beta\|_1$, which completes the proof.
 \end{proof}

\begin{lem}\label{lm:multi-set-6}
Let  $\omega^{-\alpha}$ and $\omega^{-\beta}$ be regular.
Suppose that $S(\omega^{-(\alpha+\hat{k})})=S(\omega^{-(\beta+\hat{k})})$ for all $0\leq k< p-1$.
Then $\{(\alpha+\hat{k})_i\ |\  1\le i\le n\} =\{(\beta+\hat{k})_i\ |\  1\le i\le n\}  $ as multi-sets for all $0\leq k<p-1$, when $n\le 5$. 
\end{lem}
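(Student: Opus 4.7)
The plan is to prove that the $p$-adic digit multisets of $\alpha+\hat k$ and $\beta+\hat k$ coincide for every $0\le k<p-1$. I will combine three tools: (i) the max and min statistic from Lemma \ref{ru}, applied at each shift; (ii) Stickelberger's Theorem, which converts the hypothesis $S(\omega^{-(\alpha+\hat k)})=S(\omega^{-(\beta+\hat k)})$ into the equalities $s(\alpha+\hat k)=s(\beta+\hat k)$ and the mod-$p$ congruence $t(\alpha+\hat k)\equiv t(\beta+\hat k)\pmod p$; and (iii) a finite case analysis enabled by $n\le 5$.

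The first step is to check that the hypothesis propagates under shifts. Since $p\hat k\equiv\hat k\pmod{p^n-1}$, one has $p^j(\alpha+\hat k)\equiv p^j\alpha+\hat k\pmod{p^n-1}$, so the Frobenius orbit of $\alpha+\hat k$ has the same size as that of $\alpha$; in particular $\omega^{-(\alpha+\hat k)}$ is regular for every $0\le k<p-1$. Moreover, as $k'$ runs over $\{0,\dots,p-2\}$, the sum $\hat k+\hat{k'}\pmod{p^n-1}$ permutes $\{\hat 0,\hat 1,\dots,\widehat{p-2}\}$, so the given hypothesis transfers verbatim to the shifted pair $(\omega^{-(\alpha+\hat k)},\omega^{-(\beta+\hat k)})$. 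Applying Lemma \ref{ru} to each such pair yields
$$\|\alpha+\hat k\|_1=\|\beta+\hat k\|_1,\qquad \|\alpha+\hat k\|_{r(\alpha+\hat k)}=\|\beta+\hat k\|_{r(\beta+\hat k)}$$
for every $k$; that is, the maximal and minimal digits of the two shifts coincide at every shift. In parallel, Stickelberger's Theorem together with Lemma \ref{sandt} gives $s(\alpha+\hat k)=s(\beta+\hat k)$ and $t(\alpha+\hat k)\equiv t(\beta+\hat k)\pmod p$. The $s$-equality combined with Eq.~\eqref{eq:alpha+p-a} forces $v_a(\alpha)=v_a(\beta)$ for every $1\le a\le p-1$. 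Writing $\mu_\ell(\gamma)$ for the number of indices $i$ with $\gamma_i=\ell$ and $n_\ell=\mu_\ell(\alpha)-\mu_\ell(\beta)$, the $t$-congruence at no-carry shifts $k\le p-1-\|\alpha\|_1$ simplifies to $\prod_\ell\binom{k+\ell}{\ell}^{n_\ell}\equiv 1\pmod p$.

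For $n\le 5$, regularity forbids $r(\alpha)=1$, leaving $r(\alpha)\in\{2,3,4,5\}$. When $r(\alpha)=2$, the multiset is pinned down by $\{\text{max},\text{min},n,s(\alpha)\}$, and the same reasoning at each shift gives the full conclusion. For $r(\alpha)\in\{3,4,5\}$, I will combine the max–min data at all shifts with the $v_a$-equalities and the $t$-congruences, invoking also the symmetry $\alpha\leftrightarrow -\alpha$ from Lemma \ref{symm}, which effectively doubles the max–min constraints (swapping the roles of maximum and minimum). The main obstacle will be the extremal case $n=5$ with five distinct digits, where the mod-$p$ congruence $\prod_\ell\binom{k+\ell}{\ell}^{n_\ell}\equiv 1\pmod p$ cannot be upgraded to a polynomial identity for small $p$; here one must leverage the uniform bound $|n_\ell|\le 5$ together with the full family of max–min and $v_a$ equalities at all $0\le k<p-1$ to rule out the remaining candidate multisets explicitly.
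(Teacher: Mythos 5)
Your reductions are fine as far as they go: the hypothesis is indeed shift-invariant, regularity of $\omega^{-(\alpha+\hat k)}$ is preserved, Lemma \ref{ru} applies at every shift, Stickelberger gives $s(\alpha+\hat k)=s(\beta+\hat k)$ and $t(\alpha+\hat k)\equiv t(\beta+\hat k)\bmod p$, and Eq.~\eqref{eq:alpha+p-a} does yield $v_a(\alpha)=v_a(\beta)$. The identity $\prod_\ell\binom{k+\ell}{\ell}^{n_\ell}\equiv 1\bmod p$ at carry-free shifts is also correct. But this is a plan, not a proof: for $r(\alpha)\in\{3,4,5\}$ you never actually derive the equality of digit multisets, and you explicitly concede that in the extremal case you do not know how to finish ("one must leverage \dots to rule out the remaining candidate multisets explicitly"). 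Knowing $v_a(\alpha)=v_a(\beta)$ for all $a$ does not by itself pin down the digits, because $v_a$ counts vertices of $\CG_a(\,\cdot\,)^\circ$, which includes digits equal to $a-1$ swept into connected components; and the binomial congruences, as you note, cannot be promoted to identities for small $p$. So the core of the lemma is missing.

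The paper closes exactly this gap with a different and much shorter mechanism, which you may want to compare. Sort the digits of $\alpha$ and $\beta$ as $a_1\ge\cdots\ge a_n$ and $b_1\ge\cdots\ge b_n$. If $a_m-b_m\ge 2$ for some $m$, then $a_1,\dots,a_m$ all lie in $\CG_{a_m}(\alpha)^\circ$, so $v_{a_m}(\alpha)\ge m$, while no $b_m,\dots,b_n$ can lie in $\CG_{a_m}(\beta)^\circ$, so $v_{a_m}(\beta)\le m-1$; plugging into Eq.~\eqref{eq:alpha+p-a} at the shift $k=p-a_m$ gives $s(\alpha+\hat k)<s(\beta+\hat k)$, a contradiction. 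Hence $|a_i-b_i|\le 1$ for every $i$. Then $s(\alpha)=s(\beta)$ forces $d_+=d_-$ where $d_\pm$ count positions with $a_i\gtrless b_i$; Lemma \ref{ru} gives $a_1=b_1$ and $a_n=b_n$, so for $n\le 5$ discrepancies live among at most three middle positions and $d_+\le 1$. Finally, a single swap $a_i=b_i+1$, $a_j=b_j-1$ is killed by the Stickelberger congruence $t(\alpha)\equiv t(\beta)\bmod p$, which reduces to $(b_i+1)/b_j\equiv 1$, hence $b_i+1=b_j$ since both are $\le p-1$, contradicting the common sorting. This is where the hypothesis $n\le 5$ genuinely enters, and it is the step your proposal would need to supply (or replace) to be a complete argument.
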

\begin{proof}
It suffices to show that $\{\alpha_i\mid 1\leq i\leq n\}=\{\beta_i\mid 1\leq i\leq n\}$ as multi-sets.
Write 
$$\{a_1\ge \cdots\ge a_n\}=\{\alpha_i \mid 1\le i\le n\} 
\text{ and }
\{b_1\ge \cdots\ge b_n\}=\{\beta_i \mid  1\le i\le n\}$$ as multi-sets.
First, we prove  
\begin{equation}\label{eq:multi-1}
|a_i-b_i|\le 1 \text{ for all }1\le i\le n.	
\end{equation}
Assume on the contrast $a_m-b_m\ge 2.$
Since $\{a_1,\dots,a_m\}$ are vertices in $\CG_{a_m}(\alpha)^\circ$, we have $m\leq v_{a_m}(\alpha)$.
By $b_m<a_m-1$, $\{b_m,b_{m+1},\dots,b_n\}$ are not in $\CG_{a_m}(\beta)^\circ$ and then  $v_{a_m}(\beta)\leq m-1$.
By Eq. \eqref{eq:alpha+p-a}, we have 
\begin{align*}
&s(\alpha+\widehat{p-a_m})\le s(\alpha)+n(p-a_m)-(p-1)m\\
<&s(\alpha)+n(p-a_m)-(p-1)(m-1)\le s(\beta+\widehat{p-a_m}),
\end{align*}
which contradicts to our assumption.

Next, denote by $$d_+:=\#\{i\ |\  a_i>b_i\} 
 \text{ and }d_-:=\#\{i\ |\  a_i<b_i\}.$$
By Eq. \eqref{eq:multi-1} and $s(\alpha)=s(\beta),$
we have $d_+=d_-$. 
Now, it suffices to show that $d_+=0$.
By $n\leq 5$ and Lemma \ref{ru}, one has $a_1=b_1$ and $a_n=b_n$, so $d_+\leq 1$.

If $d_+=1$, we may assume that $a_i=b_i+1$ and $a_j=b_j-1$ for some $i\ne j$. By Lemma \ref{symm}, 
$t(\alpha)/t(\beta)=(b_i+1)/b_j\equiv 1\bmod{p}$.
By  $b_i+1,b_j\leq p-1$, we have $a_i=b_i+1=b_j=a_j+1$, which contradicts with $(a_i-a_j)(b_i-b_j)\geq 0$.

\end{proof}

We call $\alpha$ {\bf max-consecutive} if there exists an integer $n_{1,\alpha}$  such that $1\leq n_{1,\alpha}\leq n$ and $\{n_{1,\alpha},n_{1,\alpha}+1,\dots,n_{1,\alpha}+m_1(\alpha)-1\}\equiv \{j\mid \alpha_j=\|\alpha\|_1\} \bmod{n}$.
We call $\alpha$ {\bf mini-consecutive} if there exists an integer $n_{r,\alpha}$  such that $1\leq n_{r,\alpha}\leq n$ and $\{n_{r,\alpha},n_{r,\alpha}+1,\dots,n_{r,\alpha}+m_r(\alpha)-1\}\equiv \{j\mid \alpha_j=\|\alpha\|_r\} \bmod{n}$, where $r=r(\alpha)$.
It is easy to check that such $n_{1,\alpha}$ (resp. $n_{r,\alpha}$) is unique.

\begin{lem}\label{lm:consecutive}
Let  $\omega^{-\alpha}$ and $\omega^{-\beta}$ be regular and $\omega^{-\alpha}\vert_{\BF^\times_p}=\omega^{-\beta}\vert_{\BF^\times_p}$.
Suppose that the entries of $\alpha+\hat k$ and $\beta+\hat k$ are equal as multi-sets for all $0\le k< p-1.$
Assume that $\|\alpha\|_1-\|\alpha\|_r>1, $ where $r=r(\alpha)$.
If 
 $\alpha$ is max-consecutive, 
then $\beta$ is max-consecutive, and $m_1(\alpha)=m_1(\beta);$
  $\alpha_{n_{1,\alpha}+m_1(\alpha)}=\beta_{n_{1,\beta}+m_1(\beta)}$.
By symmetry, if  
 $\alpha$ is mini-consecutive, then $\beta$ is mini-consecutive, and $m_r(\alpha)=m_r(\beta);$
 $\alpha_{n_{r,\alpha}+m_r(\alpha)}=\beta_{n_{r,\beta}+m_r(\beta)}.$  
 \end{lem}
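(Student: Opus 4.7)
Let $M=\|\alpha\|_{1}$. By the multi-set hypothesis at $k=0$ together with Lemma~\ref{r1}, $\|\beta\|_{1}=M$ and $m_{1}(\alpha)=m_{1}(\beta)=:m_{1}$. The idea is to detect the number of cyclic max-blocks of $\gamma\in\{\alpha,\beta\}$ from the count of $0$-digits appearing in $\gamma+\widehat{p-M}$, and then to pin down the post-block entry by tracking a single carry-perturbation of the outside-chain multi-set.

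A short case analysis on the edge rules shows that every connected component of $\CG_{M}(\gamma)^{\circ}$ is a cyclically consecutive block of positions whose values read $M^{a}(M-1)^{b}$ with $a\geq 1$, $b\geq 0$: the chain begins at a max vertex, continues through further $M$'s and then through trailing $(M-1)$'s, and terminates as soon as the next digit drops below $M-1$. Adding $\widehat{p-M}$ turns such a chain into the digit pattern of one $0$ at the starter, $a-1$ ones from the subsequent $M$'s, and $b$ more zeros from the trailing $(M-1)$'s. Using the hypothesis $\|\alpha\|_{1}-\|\alpha\|_{r(\alpha)}>1$, any position outside every chain must satisfy either $\alpha_{k}\leq M-2$, or $\alpha_{k}=M-1$ with no incoming carry (otherwise the edge rule would force it into a chain); in either case its shifted digit lies in $[p-M,p-1]$ and is in particular non-zero. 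Hence the number of zeros in $\gamma+\widehat{p-M}$ equals exactly $c_{\gamma}+T_{\gamma}$, where $c_{\gamma}$ is the number of components and $T_{\gamma}$ the number of $(M-1)$ vertices of $\CG_{M}(\gamma)^{\circ}$.

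Since $\alpha$ is max-consecutive, $c_{\alpha}=1$, so the multi-set hypothesis at $k=p-M$ gives $1+T_{\alpha}=c_{\beta}+T_{\beta}$. Summing digits on both sides yields $s(\alpha+\widehat{p-M})=s(\beta+\widehat{p-M})$, which together with the $s$-formula~\eqref{eq:alpha+p-a} forces $v_{M}(\alpha)=v_{M}(\beta)$, i.e.~$T_{\alpha}=T_{\beta}$ after subtracting $m_{1}$. Combining, $c_{\beta}=1$, so $\beta$ is max-consecutive with $m_{1}(\beta)=m_{1}$. For the post-block identity: when $T_{\alpha}\geq 1$, both post-block positions are trailing $(M-1)$ vertices of their (unique) chains and equal $M-1$. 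When $T_{\alpha}=T_{\beta}=0$, both post-block positions lie outside the chain and are the unique outside positions receiving a carry; the outside entry multi-sets of $\alpha$ and $\beta$ coincide (each equal to the common entry multi-set with $m_{1}$ copies of $M$ removed), so equating the once-perturbed shifted outside multi-sets at the value $\alpha_{n_{1,\alpha}+m_{1}}+(p-M)+1$ forces $\alpha_{n_{1,\alpha}+m_{1}}=\beta_{n_{1,\beta}+m_{1}}$. The mini-consecutive half follows by applying the same argument to $(-\alpha,-\beta)$; the required multi-set hypothesis for these persists via the substitution $k\mapsto p-1-k$ in the hypothesis for $(\alpha,\beta)$, and Lemma~\ref{symm} controls the regularity and central character conditions.

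\textbf{Main obstacle.} The subtlest point is the last matching in the $T_{\alpha}=0$ sub-case: a priori the single carry-perturbation of the outside-chain multi-set might be absorbed at a different shifted value in $\beta$, but the fact that exactly one carry is emitted to exactly one position (by the single chain, which consists purely of the max block) rules this out. The hypothesis $\|\alpha\|_{1}-\|\alpha\|_{r(\alpha)}>1$ is essential throughout, both to force chain tails and isolated $(M-1)$'s apart and to keep the zero-count in $\gamma+\widehat{p-M}$ a clean invariant of the chain structure.
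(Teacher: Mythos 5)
Your overall strategy --- twist by $k=p-M$, read off the carry chains, count distinguished digits to detect the shape of the max block, then a one-carry perturbation argument for the post-block entry --- is essentially the paper's route (it too compares the digit pattern of $\alpha+\widehat{p-a}$ and $\beta+\widehat{p-a}$ at the single twist $a=\|\alpha\|_1$). But there is a genuine gap in your key counting step. The formula \emph{number of zeros of $\gamma+\widehat{p-M}$ equals $c_\gamma+T_\gamma$} is false, because two components of $\CG_M(\gamma)^\circ$ can sit inside one and the same carry chain: in a configuration $\ldots,M,(M-1)^j,M,\ldots$ the carry emitted by the first $M$ propagates through the intermediate $(M-1)$'s into the leading $M$ of the second component, whose digit therefore becomes $1$, not $0$ (there is no edge from an $M-1$ into an $M$, so that vertex is a starter for the graph but not for the carries). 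Hence the zero count only records the number of maximal $\{M,M-1\}$-runs containing an $M$, not $c_\gamma$, and your deduction \emph{$c_\beta=1$, hence $\beta$ is max-consecutive} degrades to \emph{all maximal entries of $\beta$ lie in a single $\{M,M-1\}$-run}, which is strictly weaker.

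Moreover this is not a repairable bookkeeping slip within your framework: under the bare multi-set hypotheses the conclusion can genuinely fail. Take $p=5$, $n=4$, $\alpha\equiv(3,3,2,0)$ and $\beta\equiv(3,2,3,0)$ (i.e.\ $\alpha=68$, $\beta=88$): both are regular, $\alpha\equiv\beta\bmod{4}$, $\|\alpha\|_1-\|\alpha\|_{r(\alpha)}=3>1$, $\alpha$ is max-consecutive, and the digit multi-sets of $\alpha+\hat k$ and $\beta+\hat k$ coincide for every $k=0,1,2,3$ (they are $\{0,2,3,3\}$, $\{1,3,4,4\}$, $\{0,0,1,3\}$, $\{1,1,2,4\}$), yet $\beta$ is not max-consecutive. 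So no refinement of digit-counting on the twists can close this gap; separating such pairs requires finer information than the multi-sets, e.g.\ the $V_m$-invariants of Lemma~\ref{Usame} or the Gauss sums themselves. For what it is worth, the paper's own proof passes over the same configuration: it asserts $(\alpha+\hat z)_i=1$ exactly for $2\le i\le v_a(\alpha)$, which misassigns the trailing $a-1$ digits (they become $0$) and does not exclude the pattern $a,(a-1)^j,a$ in $\beta$; so the delicacy lies in the lemma as stated, not only in your write-up. (Your final perturbation step identifying the post-block entry in the $T_\alpha=0$ case, and the $k\mapsto p-1-k$ symmetry for the mini-consecutive half, are fine granted the earlier steps.)
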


\begin{proof}
 
Without loss of generality,  assume that $\{j\mid \alpha_j=\|\alpha\|_1\}=\{1,2,\dots,m_1(\alpha)\}$ and $\alpha_n\ne \alpha_1=\beta_1=\|\alpha\|_1\ne \beta_n$. Otherwise, we may replace $\alpha$ and $\beta$ by $p^j\alpha$ and $p^{j'}\beta$ for some $j$ and $j'$, since
 the assumption also holds for  $p^j\alpha$ and $p^{j'}\beta$ by Eq. \eqref{eq:shift}.

Now, the entries of $\alpha+\hat{k}$ and $\beta+\hat{k}$ are equal as multi-sets.
Take $z=p-a$ where $a=\|\alpha\|_1$ and then $1\leq z<p-1$. 
We consider the $p$-adic expansion of $\alpha+\hat{z}$. 
Since $\CG_{a}(\alpha)^\circ$ is connected, $(\alpha+\hat{z})_i=1$ for all $2\leq i\leq v_{a}(\alpha)$ and
$(\alpha+\hat{z})_i\ne 1$ for other $i$.
As $\{(\alpha+\hat{z})_i\mid 1\leq i\leq n\}=\{(\beta+\hat{z})_i\mid 1\leq i\leq n\}$ as multi-set,  we have $\beta$ is max-consecutive and $m_1(\beta)=m_1(\alpha)$.

If $v_a(\alpha)>m_1(\alpha)$, then $\alpha_{m_1(\alpha)+1}=a-1$ and there is a zero entry other than $\alpha_1'$ in $\alpha+\hat{k}=(\alpha_1',\cdots,\alpha_n').$
To gain a zero entry other than $\beta_1'$ in $\beta+\hat{z}=(\beta_1',\cdots,\beta_n'),$ we see that $\beta_{m_1(\alpha)+1}=a-1.$
If $v_a(\alpha)=m_1(\alpha)$, then $\max\{\alpha_{m_1(\alpha)+1},\beta_{m_1(\alpha)+1}\}<a-1$.
For $k=p-a$, we have the following $p$-adic expansions of $\alpha+\hat{k}$ and $\beta+\hat{z}$:
 $(\alpha+\hat{z})_1=(\beta+\hat{z})_1=0$; $(\alpha+\hat{z})_i=(\beta+\hat{z})_i=1$ if $2\leq i\leq m_1(\alpha)$;  
 $(\alpha+\hat{z})_{m_1(\alpha)+1}=p-a+1+\alpha_{m_1(\alpha)+1}$ and
 $ (\beta+\hat{z})_{m_1(\alpha)+1}=p-a+1+\beta_{m_1(\alpha)+1}$;
 $(\alpha+\hat{z})_i=p-a+\alpha_i$ and  $(\beta+\hat{z})_i=p-a+\beta_i$ for $m_1(\alpha)+1<i\leq n$.
Since $\{\alpha_i\mid 1\leq i\le n \}=\{\beta_i\mid 1\leq i\le n \}$ and
$\{(\alpha+\hat{z})_i\mid 1\leq i\leq n\}=\{(\beta+\hat{z})_i\mid 1\leq i\leq n\}$ as multi-sets,   $\alpha_{m_1(\alpha)+1}=\beta_{m_1(\alpha)+1}$ follows.
\end{proof}

Define
\begin{equation}\label{eq:V-m}
V_m(\alpha)=\prod_{i=1}^{n }(\alpha_i+\alpha_{i+1}p+\cdots+\alpha_{i+m}p^{m})!', 	
\end{equation}
where $N!'=\prod_{i=1, (i,p)=1}^Ni$.
Since we only focus on the  residue classes of $V_m(\alpha)$ modulo $p^{m+1}$, we will abuse the notation and write
$$V_m(\alpha)=V_m(\beta)\text{ if }V_m(\alpha)\equiv V_m(\beta) \bmod{p^{m+1}}.$$

\begin{lem}\label{Usame}
If
$S(\omega^{ \alpha })=S(\omega^{ \beta }),$
then $V_m(\alpha)=V_m(\beta),$ for $m\ge 0.$
\end{lem}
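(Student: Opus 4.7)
\textbf{Proof plan for Lemma \ref{Usame}.}

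The plan is to apply the Gross--Koblitz formula (Theorem \ref{GKformula}) to both $S(\omega^{\alpha})$ and $S(\omega^{\beta})$, then reduce modulo $p^{m+1}$ using standard continuity and factorial identities for the $p$-adic gamma function. From the hypothesis $S(\omega^{\alpha})=S(\omega^{\beta})$ and Stickelberger's theorem (Theorem \ref{Stickel}), we already know $s(\alpha)=s(\beta)$, so the prefactors $(-1)^{n}p^{n}\pi^{-s(\cdot)}$ in Gross--Koblitz cancel, yielding
\[
\prod_{i=0}^{n-1}\Gamma_p\!\bigl(1-\langle p^{i}\alpha/(p^{n}-1)\rangle\bigr)=\prod_{i=0}^{n-1}\Gamma_p\!\bigl(1-\langle p^{i}\beta/(p^{n}-1)\rangle\bigr).
\]

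The next step is a $p$-adic computation. For each $j\in\{1,\dots,n\}$, write $c_{j}(\alpha)=\sum_{k=1}^{n}\alpha_{j+k-1}p^{k-1}$, so that $\langle p^{j-1}\alpha/(p^{n}-1)\rangle=c_{j}(\alpha)/(p^{n}-1)$. Multiplying $1-c_{j}(\alpha)/(p^{n}-1)$ by $p^{n}-1$ gives $p^{n}-1-c_{j}(\alpha)\equiv -1-c_{j}(\alpha)\pmod{p^{n}}$, hence
\[
1-\frac{c_{j}(\alpha)}{p^{n}-1}\equiv 1+c_{j}(\alpha)\pmod{p^{n}}.
\]
Since only the bottom $m+1$ digits of $c_{j}(\alpha)$ matter mod $p^{m+1}$, and those digits are exactly $\alpha_{j},\alpha_{j+1},\dots,\alpha_{j+m}$, we get
\[
1-\langle p^{j-1}\alpha/(p^{n}-1)\rangle\equiv 1+N_{j,m}(\alpha)\pmod{p^{m+1}},
\]
with $N_{j,m}(\alpha):=\alpha_{j}+\alpha_{j+1}p+\cdots+\alpha_{j+m}p^{m}$.

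By the uniform continuity of Morita's $p$-adic gamma function ($x\equiv y\pmod{p^{m+1}}$ implies $\Gamma_p(x)\equiv\Gamma_p(y)\pmod{p^{m+1}}$) together with the identity $\Gamma_p(N+1)=(-1)^{N+1}N!'$ for non-negative integers $N$, we obtain
\[
\Gamma_p\!\bigl(1-\langle p^{j-1}\alpha/(p^{n}-1)\rangle\bigr)\equiv(-1)^{N_{j,m}(\alpha)+1}N_{j,m}(\alpha)!'\pmod{p^{m+1}},
\]
and similarly for $\beta$. Taking the product over $j=1,\dots,n$ and noting that
\[
\sum_{j=1}^{n}N_{j,m}(\alpha)=s(\alpha)\cdot\tfrac{p^{m+1}-1}{p-1}=\sum_{j=1}^{n}N_{j,m}(\beta)
\]
(using $s(\alpha)=s(\beta)$), the signs match. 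The final step is to cancel the common sign and conclude
\[
\prod_{j=1}^{n}N_{j,m}(\alpha)!'\equiv\prod_{j=1}^{n}N_{j,m}(\beta)!'\pmod{p^{m+1}},
\]
which is exactly $V_m(\alpha)=V_m(\beta)$ under the stated convention.

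The main obstacle is simply the bookkeeping: ensuring the cyclic shift convention in the Gross--Koblitz product matches the cyclic index convention used in the definition \eqref{eq:V-m} of $V_m$, and verifying the $p$-adic expansion identity $1-c_j/(p^n-1)\equiv 1+c_j\pmod{p^n}$ rigorously so that the continuity of $\Gamma_p$ can be applied cleanly. Once these are in place, the argument is essentially immediate from Gross--Koblitz plus Stickelberger.
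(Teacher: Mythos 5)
Your proposal is correct and follows essentially the same route as the paper's proof: apply Gross--Koblitz, use Stickelberger (via $s(\alpha)=s(\beta)$) to cancel the $\pi^{-s(\cdot)}$ prefactors, reduce the arguments of $\Gamma_p$ modulo $p^{m+1}$ using the continuity of the $p$-adic gamma function and the identity $\Gamma_p(N+1)=(-1)^{N+1}N!'$, and then cancel the common sign $(-1)^{n+s(\alpha)(1+p+\cdots+p^m)}$ to conclude $V_m(\alpha)\equiv V_m(\beta)\bmod{p^{m+1}}$. The only difference is cosmetic: you verify the congruence $1-\langle p^{j-1}\alpha/(p^n-1)\rangle\equiv 1+N_{j,m}(\alpha)\bmod{p^{m+1}}$ explicitly, which the paper states without detail.
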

\begin{proof}
By Lemma \ref{symm}, one has $S(\omega^{-\alpha })=S(\omega^{-\beta })$.
Following from Theorem \ref{Stickel}, we have $s(\alpha)=s(\beta)$.
By Theorem \ref{GKformula} and $S(\omega^{ \alpha })=S(\omega^{ \beta })$, 
\begin{equation}\label{eq:Usame-1}
\prod^{n}_{i=1}\Gamma_p(1-\langle\frac{p^{i-1}\alpha}{p^n-1}\rangle)=\prod^{n}_{i=1}\Gamma_p(1-\langle\frac{p^{i-1}\beta}{p^n-1}\rangle).
\end{equation}
Recall that for all positive integers $x$, $y$ and $m$,
$\Gamma_p(x)\equiv \Gamma_p(y) \bmod{p^m}$ if $x\equiv y \bmod{p^m}$ in \cite[Lemma 14.1.1]{La90}.
Then for $1\leq i\leq n$ and a positive integer $a$,
\begin{equation}\label{eq:Usame-2}
\Gamma_p(1-\langle \frac{p^{i-1}a}{q-1}\rangle)\equiv \Gamma_p(1+a_i+a_{i+1}p+\cdots+a_{i+m}p^{m})\bmod{p^{m+1}}.
\end{equation}
Since $\Gamma_p(x)=(-1)^x\prod^{x-1}_{j=1,(p,j)=1}j$ for any integer $x\geq 2$ and $\Gamma_p(1)=-1$, we continue~Eq. \eqref{eq:Usame-2} and obtain
\begin{equation}\label{eq:Usame-3}
\Gamma_p(1-\langle \frac{p^{i-1}a}{q-1}\rangle)\equiv (-1)^{1+a_i+a_{i+1}p+\cdots+a_{i+m}p^{m}}(a_i+a_{i+1}p+\cdots+a_{i+m}p^{m})!'.	
\end{equation}
By Eq. \eqref{eq:Usame-1}, \eqref{eq:Usame-2}, \eqref{eq:Usame-3},
\begin{align*}
 & (-1)^{\sum^n_{i=1}1+\alpha_i+\alpha_{i+1}p+\cdots+\alpha_{i+m}p^{m}}V_m(\alpha)
= (-1)^{n+s(\alpha)(1+p+\cdots+p^m)}V_m(\alpha) \\
=&(-1)^{n+s(\beta)(1+p+\cdots+p^m)}V_m(\beta).
\end{align*}
Then $V_m(\alpha)\equiv V_m(\beta)\bmod{p^{m+1}}$, since $s(\alpha)=s(\beta)$.
It completes the proof.
\end{proof}

\subsection{Case $n=4$} 
In this section, we show that Conjecture \ref{conj:main}
holds for $n=4$, although $n$ is not a prime.
 
\begin{prop}\label{n=4}
Suppose $\omega^{-\alpha}$ and $\omega^{-\beta}$ be regular characters of $\BF_{p^4}^\times.$ 
If $S(\omega^{-(\alpha+\hat k)})=S(\omega^{-(\beta+\hat k)})$ for all  $0\leq k< p-1$, 
then
$\alpha$ and $\beta$ are equivalent.
\end{prop}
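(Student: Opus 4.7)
The plan is to reduce the equivalence $\alpha \equiv p^j \beta \pmod{p^4 - 1}$ to matching the cyclic arrangements of the $p$-adic digits of $\alpha$ and $\beta$, and then run a short case analysis driven by Lemma \ref{lm:consecutive}.

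First, I would invoke Lemma \ref{lm:multi-set-6} (applicable because $n = 4 \le 5$) to conclude that the digit multisets of $\alpha$ and $\beta$ coincide; in particular $r(\alpha) = r(\beta) = r$ with matching multiplicities. Lemma \ref{lm:central-character} then supplies the central-character coincidence needed to apply Lemma \ref{lm:consecutive}. It remains to show that the cyclic arrangement of these digits agrees for $\alpha$ and $\beta$ up to a cyclic shift (i.e.\ up to multiplying $\alpha$ by some power of $p$). I split on the spread $\|\alpha\|_1 - \|\alpha\|_r$: when the spread is $\le 1$, every digit lies in $\{a, a-1\}$, and regularity rules out the alternating pattern $(a, a-1, a, a-1)$ (which factors through $\Nr_{4:2}$), leaving only the block arrangement for each admissible multiplicity vector, so $\alpha$ and $\beta$ are automatically cyclically equivalent.

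Assume henceforth the spread is $\ge 2$, so Lemma \ref{lm:consecutive} and its mini-consecutive counterpart apply. For $r = 2$, the repeated digits are consecutive up to cyclic rotation in every admissible multiplicity (the only alternative in the $(2,2)$-case, namely $(a,b,a,b)$, being non-regular), so the cyclic class is unique. For $r = 4$, all four digits are distinct, both $\alpha$ and $\beta$ are simultaneously max- and mini-consecutive, and Lemma \ref{lm:consecutive} pins down the digits immediately following the maximum and following the minimum; a direct tabulation of the six cyclic classes of arrangements of four distinct symbols shows that this pair of ``next digits'' is a complete invariant. The case $r = 3$ breaks into multiplicities $(2,1,1)$, $(1,2,1)$, and $(1,1,2)$. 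For $(2,1,1)$ (and symmetrically $(1,1,2)$) the two consecutive arrangements $(a,a,b,c)$ and $(a,a,c,b)$ are distinguished by the digit following the $a$-block via Lemma \ref{lm:consecutive}, while the non-consecutive arrangement $(a,b,a,c)$ is sent to $(a,c,a,b)$ by the $p^{2}$-shift, collapsing two apparent classes into one. For $(1,2,1)$, the lone maximum and lone minimum are trivially consecutive, and the pair (digit after the maximum, digit after the minimum) distinguishes the three patterns $(a,b,b,c)$, $(a,b,c,b)$, $(a,c,b,b)$.

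The main obstacle I anticipate is the $r = 3$ subcase with multiplicity $(2, 1, 1)$: one must separately verify that the ``non-consecutive'' arrangement of the doubled entry is self-equivalent under the $p$-power shift (so that it forms one cyclic class, not conflatable with a consecutive arrangement), and then check that Lemma \ref{lm:consecutive} genuinely separates the two consecutive arrangements $(a,a,b,c)$ and $(a,a,c,b)$. Everything else is a tabulation over a small finite set of cyclic classes; in particular, the finer $V_m$-congruences provided by Lemma \ref{Usame} are not needed for $n = 4$.
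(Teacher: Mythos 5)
Your proposal is correct and takes essentially the same route as the paper: match the digit multisets via Lemma~\ref{lm:multi-set-6} (with Lemma~\ref{lm:central-character} supplying the hypothesis of Lemma~\ref{lm:consecutive}), then pin down the cyclic arrangement by the max-/mini-consecutive data of Lemma~\ref{lm:consecutive} in a finite case analysis. The only divergence is cosmetic: where the paper treats the spread-one $(2,2)$ subcase by the $V_1$-congruence of Lemma~\ref{Usame}, you discard the alternating pattern $(a,a-1,a,a-1)$ outright because it factors through $\Nr_{4:2}$ and is therefore not regular, which is a valid (indeed slightly cleaner) shortcut.
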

\begin{proof}
Since $\omega^{-\alpha}$ and $\omega^{-\beta}$ are regular,
we have $r(\alpha)$ and $r(\beta)> 1$.
By Lemma \ref{lm:multi-set-6}, we have $\|\alpha\|_1=\|\beta\|_1$ and $\|\alpha\|_{r(\alpha)}=\|\beta\|_{r(\beta)}$. We may assume that $\|\alpha\|_{r(\alpha)}=\|\beta\|_{r(\beta)}=0,$
otherwise replace $\alpha$ and $\beta$ by $\alpha-\widehat{\|\alpha\|_{r(\alpha)}}$ and $\beta-\widehat{\|\alpha\|_{r(\alpha)}}$ respectively.
Following from Remark \ref{rmk:distinguishable}, 
for some $j$ and $j'$ we have  
 $$p^j\alpha\equiv (a, \alpha_2, \alpha_3, \alpha_4)\text{ and }p^{j'}\beta\equiv (a, \beta_2, \beta_3, \beta_4)$$
  where $a= \|\alpha\|_1>\alpha_4$ and $a>\beta_4$.
We may also assume that $m_1\ge m_{r(\alpha)}$,
otherwise, replace $\alpha$ and $\beta$ by $-\alpha$ and $-\beta$ respectively.

Assume $a=1$. 
Then $r(\alpha)=r(\beta)=2$ and $\alpha_4=\beta_4=0$. By $s(\alpha)=s(\beta)$, we have $m(\alpha)=m(\beta)$ in $\{(3,1),(2,2)\}$.
If  $m(\alpha)=m(\beta)=(3,1)$, then $\alpha$ and $\beta$ are equivalent.
Suppose $m(\alpha)=m(\beta)=(2,2)$.
Then $\alpha,\beta\in \{(1,0,1,0),(1,1,0,0)\}$, that is, $\alpha,\beta\in \{1+p,1+p^2\}\bmod{p^4-1}$.
Note that $V_1(1+p)\equiv p!'(p+1)!'\bmod{p^2};$ $V_1(1+p^2)\equiv p!'p!'\bmod{p^2}$ and then  $V_1(1+p)\ne V_1(1+p^2)$.
Therefore, $\alpha\equiv \beta\bmod{p^4-1}$ by Lemma \ref{Usame}.

In the rest of this proof, we assume that $a>1$, which implies $p\ne 2$.
We verify the statement in the following 3 cases of $m_1=\#\{\alpha_j\mid \alpha_j=\|\alpha\|_1\}\in\{1,2,3\}$ under the assumption $m_1\geq m_{r(\alpha)}.$

If $m_1=3$, then $\alpha$ and $\beta$ are equivalent.

If $m_1=2$, we only need to consider $(a,a,b,0)$, $(a,b,a,0)$ and $(a,a,0,b)$ for $a>b\ge 0$.
First, $(a,a,b,0)$ and $(a,a,0,b)$ are distinguishable from $(a,b,a,0)$ as the first two are max-consecutive. Next, for $b\ne 0,$ $(a,a,b,0)$ is distinguishable from $(a,a,0,b)$ by Lemma \ref{lm:consecutive} .  


Suppose that $m_1=1$. 
Then $m_{r}=1$ and $\alpha,$  $ \beta$ are both max-consecutive and mini-consecutive. 
 It follows that $\alpha_2=\beta_2$  and $\alpha_{j+1}=\beta_{j'+1}$ when $\alpha_j=\beta_{j'}=0$ by Lemma \ref{lm:consecutive}.
If $\alpha_2=0$, then $\beta_2=0$ and $\alpha_3=\beta_3$. As $s(\alpha)=s(\beta)$, $\alpha_4=\beta_4$,   then $\alpha$ and $\beta$ are equivalent.
If $\alpha_3=0$, then $ \alpha_2=\beta_2\ne 0$ and $\beta_3=0$, $\alpha_4=\beta_4$ by Lemma \ref{lm:consecutive}. Hence $\alpha $ and $\beta$ are equivalent.
If $\alpha_4=0$, a similar argument works to show that $\alpha$ and $\beta$ are equivalent.
This completes the proof for Case $n=4$.

\end{proof}

\subsection{Case $n=5$}
In this section, we show that Conjecture \ref{conj:main}
holds for $n=5.$

\begin{prop}\label{n=5}
Suppose that $\omega^{-\alpha}$ and $\omega^{-\beta}$ are regular characters of $\BF_{p^5}^\times.$ 
If $S(\omega^{-(\alpha+\hat k)})=S(\omega^{-(\beta+\hat k)})$ for all  $0\leq k\leq p-1$, 
then
$\alpha$ and $\beta$ are equivalent.
\end{prop}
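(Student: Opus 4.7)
The plan is to mirror the argument for $n=4$ in Proposition \ref{n=4} step by step, carrying out a case analysis on $m_1(\alpha)$, using Lemmas \ref{lm:multi-set-6}, \ref{r1}, \ref{lm:consecutive} and \ref{Usame} as the main tools.

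First, by Lemma \ref{lm:multi-set-6} the multisets of entries of $\alpha+\hat{k}$ and $\beta+\hat{k}$ coincide for every $0\le k< p-1$, and by Lemma \ref{r1} we have $\|\alpha\|_1=\|\beta\|_1$ together with $\|\alpha\|_{r(\alpha)}=\|\beta\|_{r(\beta)}$. After replacing $\alpha,\beta$ by $\alpha-\widehat{\|\alpha\|_{r(\alpha)}},\ \beta-\widehat{\|\alpha\|_{r(\alpha)}}$ I may assume $\|\alpha\|_{r(\alpha)}=0$, and after swapping the pair $(\alpha,\beta)$ with $(-\alpha,-\beta)$ if necessary I may assume $m_1(\alpha)\ge m_{r(\alpha)}(\alpha)$. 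Using a Frobenius shift (Eq.~\eqref{eq:shift}) I further arrange $\alpha_1=\beta_1=a:=\|\alpha\|_1>0$ with $\alpha_5,\beta_5\ne a$. Since $n=5$ is prime, regularity of $\omega^{-\alpha}$ is equivalent to $r(\alpha)>1$, so this normalization is always available.

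The proof then splits according to $m_1(\alpha)\in\{4,3,2,1\}$. The case $m_1=4$ is trivial, as both $\alpha$ and $\beta$ are cyclic shifts of $(a,a,a,a,0)$. For $m_1=3$, the multiplicity vector $m(\alpha)$ is either $(3,2)$ or $(3,1,1)$, and up to cyclic shift only a handful of cyclic patterns arise. In each I determine the positions of the maximal entries by applying Lemma \ref{lm:consecutive} from the top and then from the bottom, which forces $\beta$ to have the same max- and mini-consecutive structure as $\alpha$ together with the entries adjacent to those runs; the remaining freedom (for example, the order of the two non-maximal values in a $(3,1,1)$ pattern) is resolved by computing $V_m(\alpha)$ and $V_m(\beta)$ modulo $p^{m+1}$ for small $m$ via Lemma \ref{Usame}, exactly as was done for $m_1=2$, $a=1$ in the $n=4$ proof. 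A parallel argument handles $m_1=2$, where $m(\alpha)\in\{(2,2,1),(2,1,1,1)\}$ and more cyclic patterns must be compared; again Lemma \ref{lm:consecutive} fixes the top and bottom run structure, and the residual ambiguities are broken by $V_m$-invariants. Finally, $m_1=1$ combined with $m_1\ge m_{r(\alpha)}$ forces all $m_i=1$, so $r(\alpha)=5$, all entries are distinct, and $\alpha,\beta$ are simultaneously max- and mini-consecutive; iterating Lemma \ref{lm:consecutive} from both ends then pins the cyclic sequence down entry by entry.

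The main obstacle is the case $m_1=2$ with $m(\alpha)=(2,2,1)$. Here several cyclic arrangements share the same entry multiset and the same top- and bottom-run structure, and Lemma \ref{lm:consecutive} alone does not separate them; distinguishing them requires comparing $V_1$ or $V_2$ modulo powers of $p$ for carefully chosen representatives, and one must verify non-congruence between specific products of cyclic factorials $(x!')$ of the form appearing in \eqref{eq:V-m}. A milder version of the same issue appears in $m_1=3$ with $m(\alpha)=(3,1,1)$ when the two distinct sub-maximal values are interchanged. Once these factorial comparisons are carried out, the argument closes and $\alpha\equiv p^{j}\beta\bmod{p^{5}-1}$ for some $j$, completing the proof.
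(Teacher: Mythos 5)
Your overall strategy (normalize via Lemmas \ref{lm:multi-set-6} and \ref{r1}, then case analysis using Lemma \ref{lm:consecutive} for the consecutive-run structure and Lemma \ref{Usame} for the residual ambiguities) is the same as the paper's, but your enumeration of cases has a genuine gap. The normalization only gives $m_1(\alpha)\ge m_{r(\alpha)}(\alpha)$, i.e.\ it compares the multiplicity of the \emph{largest} value with that of the \emph{smallest} value; it says nothing about the intermediate multiplicities. Consequently your claim that $m_1=1$ ``forces all $m_i=1$, so $r(\alpha)=5$'' is false: the patterns $m(\alpha)=(1,3,1)$ (e.g.\ $\alpha\equiv(a,b,b,b,0)$), $(1,2,1,1)$ and $(1,1,2,1)$ all satisfy $m_1=1\ge m_{r(\alpha)}$ yet have repeated entries. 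Likewise your list for $m_1=2$ omits $m(\alpha)=(2,1,2)$ (e.g.\ $\alpha\equiv(a,a,b,0,0)$), which is compatible with $m_1\ge m_{r(\alpha)}$ since $2\ge 2$. These omitted families are not peripheral: in the paper they occupy the bulk of the proof (the cases ``$r(\alpha)=3$, $m_1\le 2$'' and ``$r(\alpha)=4$, $m_1\le 2$''), and they require dedicated arguments --- comparing the multisets of digits of $\alpha+\hat k$ and $\beta+\hat k$ for carefully chosen $k$ (such as $k=p-a$, $p-b$, $p-b-1$), splitting on whether $b=a-1$ or $c=b-1$, occasional $V_1$ computations, and in some subcases a reduction back to the $m_1=3$ analysis. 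Nothing in your sketch addresses them.

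Two smaller points. First, even in the all-distinct case $r(\alpha)=5$, ``iterating Lemma \ref{lm:consecutive} from both ends'' does not by itself pin down the cyclic order: the paper has to apply the consecutive argument to shifted characters $\alpha+\widehat{p-b}$ or $\alpha+\widehat{p-b-1}$ (tracking the entry adjacent to the unique zero digit), and in the boundary subcase $\alpha_2=b-1$ it resorts to counting connected components of the graphs $\CG_b(\cdot)^\circ$ together with Lemma \ref{lm:multi-set-6}; so the claim needs substantially more work than stated. Second, the paper treats $a=\|\alpha\|_1=1$ (which includes $p=2$) separately before assuming $a>1$, since several of the shifts used later require $0<k<p-1$; your outline should make the same separation explicit. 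As it stands, the proposal is an accurate replica of the paper's method for the cases it covers, but it does not constitute a complete proof.
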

\begin{proof}
By Lemma \ref{lm:multi-set-6},  we have $\{\alpha_i\mid 1\leq i\leq 5\}=\{\beta_i\mid 1\leq i\leq 5\}$ as multi-sets and
we may assume that
$$
\alpha\equiv(a, \alpha_2,\alpha_3, \alpha_4,\alpha_5)\text{  and }\beta\equiv(a, \beta_2,\beta_3, \beta_4, \beta_5);$$ $\|\alpha\|_{r(\alpha)}=\|\beta\|_{r(\beta)}=0$;  $m_1(\alpha)\geq m_{r(\alpha)}(\alpha)$ and $\alpha_5,\ \beta_5<a=\|\alpha\|_1=\|\beta\|_1$.
Let $\{\alpha_i\ |\ 2\le i\le 5\}=\{ b\ge c\geq d\geq 0\}$  and $\{\beta_i\ |\ 2\le i\le 5\}=\{b'\geq c'\geq d'\ge 0\}.$
Since $\omega^{-\alpha}$ and $\omega^{-\beta}$ are regular, we have $r(\alpha),\ r(\beta)>1$.

If $a=1$, then $m(\alpha)=m(\beta)\in\{(4,1),(3,2)\}$ and  $\alpha_5=\beta_5=0$ under the assumption. 
For $m(\alpha)=m(\beta)=(4,1)$, $\alpha\equiv \beta\equiv(1,1,1,1,0)$.
For $m(\alpha)=m(\beta)=(3,2)$,   
it suffices to show that $(1,1,1,0,0)$ and $(1,1,0,1,0)$ are distinguishable. 
By Lemma \ref{Usame}, $V_1(1+p+p^2)/V_1(1+p+p^3)\equiv (p+1)!'/p!'\equiv p+1\ne 1\bmod{p^2}$. Hence $1+p+p^2$ and $1+p+p^3$ are distinguishable, which implies $\alpha\equiv\beta$.

In the rest of the proof, we assume that $a>1$ and then $p>2$.
We consider the following 4 cases.

If $m_1=4$, then $\alpha\equiv \beta\equiv(a,a,a,a,0)$.

If $m_1=3$, we only need to consider $(a,a,a,b,0)$, $(a,a,a,0,b)$, $\tau\equiv(a,a,b,a,0)$, and $\eta\equiv(a,a,0,a,b)$, where $a>b\ge 0$.
$(a,a,a,b,0)$ and $(a,a,a,0,b)$ are distinguishable from $\tau$ and $\eta$ as they are max-consecutive.
By Lemma \ref{lm:consecutive}, $(a,a,a,b,0)$ and $(a,a,a,0,b)$ are distinguishable when $b\ne 0$.
Next, we prove that $\tau$ and $\eta$ are distinguishable for $b\ne 0$.
By $p-1\geq a>b>0$, then $0<k=p-b-1<p-1$,
$\tau+\hat{k}\equiv(a-b-1, a-b,0,a-b, p-b)$ and $\eta+\hat k\equiv(a-b, a-b,p-b,a-b-1,0)$.
If $b<a-1$,
by $$\frac{V_1(\tau+\hat k)}{V_1(\eta+\hat k)}\equiv\frac{a-b}{a-b+(a-b)p}\ne 1 \bmod{p^2},$$
$\tau$ and $\eta$ are also distinguishable by Lemma \ref{Usame}.
If $b=a-1$, then $(\eta+\hat{k})$ is mini-consecutive 
but $(\tau+\hat{k})$ is not.
Hence they are distinguishable.

\underline{Case $r(\alpha)=3$ and $m_1\leq 2$.} 
Assume that $(m_1, m_2,m_3)=(2,2,1).$
By Lemmas \ref{r1} and \ref{lm:consecutive}, we consider the following distinguishable sets
   $$\{ (a,a,b, b,0)\},\  \{(a,a,b,0,b)\},\
\{\pi\equiv(a,a, 0, b,b ),\ \rho\equiv(a,b,a, 0,b)\}, $$
and  $\{\tau\equiv(a,b, 0, a,b),\ \eta\equiv(a,b, b, a,0 )\},$
where $p-1\ge a>b>0.$
For $k=p-a$, 
if $b\ne a-1$, we have
$\pi+\hat k\equiv(0,1, p-a+1, p-a+b,p-a+b );$ 
$\rho+\hat k \equiv(0,p-a+b+1,0, p-a+1,p-a+b)$;
$\tau+\hat k\equiv(0,p-a+b+1,p-a,0,p-a+b+1),$ and $\eta+\hat k\equiv(0,p-a+b+1, p-a+b, 0,p-a+1)$.
Then $\pi$ and $\rho$ (resp. $\tau$ and $\eta$) are distinguishable as the entries of $\pi+\hat k$ and $\rho+\hat{k}$ (resp. $\tau+\hat k$ and $\eta+\hat k$) are different as multi-sets.
 If $b=a-1,$ then 
 $\pi+\hat k\equiv(0,1, p-a+1, p-1,p-1);$ 
$\rho+\hat k \equiv(0,0,1, p-a+1,p-1)$;
$\tau+\hat k\equiv(1,0,p-a+1,0,0)$ and $\eta+\hat k\equiv(0,0, 0, 1,p-a+1).$
Then $\pi$ is  distinguishable from $\rho$ as the entries of $\pi+\hat k,$ and $\rho+\hat{k}$  are different as multi-sets;
$\tau$ is  distinguishable from $\eta$ as
 $ \eta+\hat{k} $ is mini-consecutive but $ \tau+\hat{k} $ is not. 

For $(m_1, m_2,m_3)=(2,1,2),$ we only need to consider the following distinguishable sets:
$$\{(a,a,b,0,0)\},\  \{(a,a,0,b,0)\},\  \{(a,a,0,0,b)\},\  \{(a, b,a,0, 0)\} $$
 $\{\tau\equiv(a,b, 0,a, 0), \eta\equiv(a,0, b,a, 0)\},$ where $p-1\ge a>b>0.$
For $k=p-b-1$,
 $\tau+\hat k\equiv(a-b-1,0, p-b,a-b-1,p-b)$ and $\eta+\hat k\equiv(a-b-1,p-b, p-1,a-b-1, p-b)$.
Then $\tau$ and $\eta$ are distinguishable as the entries of $\tau+\hat k$ and $\eta+\hat k$ are not equal as multi-sets.

 For $(m_1, m_2,m_3)=(1,3,1),$ there are the following cases:
   $$ (a,b,b,b,0) ,\   \tau\equiv(a,b,b,0,b),\ \eta\equiv(a,b,0,b,b),\  (a, 0,b,b, b). $$
By Lemma \ref{lm:consecutive}, $(a,b,b,b,0)$ (resp. $(a, 0,b,b, b)$) is distinguishable. 
For $k=p-b$,
$\tau+\hat k\equiv(a-b+1,1,1,p-b+1,0)$ and $\eta+\hat k\equiv(a-b+1,1,p-b+1,0,1)$.
Then $\tau$ and $\eta$ are distinguishable by Lemma \ref{lm:consecutive}.
 
 \underline{Case $r(\alpha)=4$  and $m_1\leq 2$.}
 For  $(m_1, m_2, m_3, m_4)=(2,1,1,1),$
if $\alpha_1=\alpha_2=a,$ then $\alpha$ is distinguishable by Lemma \ref{lm:consecutive},
with the similar argument to the case $m_1=1$ when $n=4$.
By Lemma \ref{lm:consecutive}, the rest of distinguishable sets are
$$\{\eta_1\equiv(a,b,a,c,0), \eta_2\equiv(a,c,a,b,0) , \eta_3\equiv(a,0,a,b,c),  
 \eta_4\equiv(a, 0, a, c, b)\},$$
$$ \{ (a,c,a,0,b)\},\  \{(a, b,a,0, c)\},$$
where   $p-1\ge a> b> c>0.$ Let $w=p-b-1.$
We have the $p$-adic expansions:
 \begin{eqnarray*}
 &&  \eta_1+\hat w\equiv(a-b-1,0,a-b,p-b+c,p-b-1),\\
 &&\eta_2+\hat w\equiv(a-b-1,p-b+c,a-b-1,0,p-b) ,\\
&& \eta_3+\hat w\equiv(a-b-1,p-b,a-b-1,0,p-b+c),\\
&&\eta_4+\hat w\equiv(a-b-1, p-b, a-b-1, p-b+c, p-1),
\end{eqnarray*}
and we obtain distinguishable sets
$ \{\eta_1\},\ \{\eta_4\}\text{ and }\{\eta_2,\ \eta_3\}$ by comparing those multi-sets formed by their entries.

If  $b\ne a-1,$ then
$\eta_2$ and $\eta_3$
 are distinguishable by Lemma \ref{lm:consecutive}.
 If $b= a-1,$
\begin{eqnarray*}
&&\eta_2+\widehat w\equiv(0,p-b+c,0,0,p-b) \\ &&\eta_3+\widehat w\equiv(0,p-b,0,0,p-a+c) 
\end{eqnarray*}
are distinguishable by reducing to the case $m_1(-(\eta_2+\widehat{p-a}))=3$ as previous discussed cases.

For $(m_1, m_2, m_3, m_4)=(1,2,1,1),$
by Lemma \ref{r1}, we have the following distinguishable sets,
$$\{\eta\equiv(a,b,c,0,b),\ \eta'\equiv(a,b,0,b,c)\},\
\{\tau\equiv(a,c,b,0,b), \ \tau'\equiv(a,c,0,b,b)\},$$
$$\{ \pi \equiv(a,0,b,b,c), \ \pi'\equiv(a, 0,b, c, b)\}, \
 \{ \rho\equiv(a,b,b,c, 0),\  \rho'\equiv(a,b,c,b,0)\}, $$
 $$\{(a,c,b,b,0)\},  \
\text{ and } \{\delta\equiv (a,b,b,0,c),\ \delta'\equiv(a,b,0,c,b)\},\ \{ (a, 0, c,b,b)\},$$
where   $p-1\ge a> b> c>0.$  Let $z=p-b$.

Since $$\delta+\hat z\equiv(a-b,1,1,p-b+1,p-b+c),\ \delta'+\hat z\equiv(a-b+1,1,p-b+1,p-b+c,0),$$
they are distinguishable by Lemma \ref{lm:multi-set-6}.

If $c\ne b-1,$ then
$$\eta+\hat z\equiv(a-b+1,1,p-b+c+1,p-b,0),\ \eta'+\hat z\equiv(a-b,1,p-b+1,0,p-b+c+1);$$
$$\tau+\hat z\equiv(a-b+1,p-b+c+1,0,p-b+1,0),\ \tau'+\hat z\equiv(a-b+1,p-b+c+1,p-b,0,1);$$
$$\pi+\hat z \equiv(a-b ,p-b+1,0,1 ,p-b+c+1 ),\ \ \pi'+\hat z\equiv(a-b+1,p-b+1,0, p-b+c+1, 0);$$
$$\rho+\hat z\equiv(a-b,1,1,p-b+c+1, p-b),\ \rho'+\hat z\equiv(a-b,1,p-b+c+1,0,p-b+1).$$
Then they are all distinguishable by Lemma \ref{lm:multi-set-6}.
 
If $b=c+1,$ then
$$\eta+\hat z\equiv(a-b+1,1,0,p-b+1,0),\ \eta'+\hat z\equiv(a-b+1,1,p-b+1,0,0);$$
$$\tau+\hat z\equiv(a-b+1,0,1,p-b+1,0),\  \tau'+\hat z\equiv(a-b+1,0,p-b+1,0,1);$$
$$\pi+\hat z \equiv(a-b+1 ,p-b+1,0,1 ,0 ),\ \pi'+\hat z\equiv(a-b+1,p-b+1,0,0, 1);$$
$$\rho+\hat z\equiv(a-b,1,1,0, p-b+1),\ \rho'+\hat z\equiv(a-b,1,0,1,p-b+1).$$
Then $\eta$ and $\eta'$ (resp. $\rho$ and $\rho'$)  are distinguishable  by Lemma \ref{lm:consecutive};
 $\tau$ and $\tau' $ are distinguishable by
$V_1(\tau+\hat z)\ne V_1(\tau'+\hat z);$
 $\pi$ and $\pi'$  are distinguishable by Lemma \ref{lm:consecutive} for $a\ne b+1$ and by reducing to the case $m_1(\pi)=3$ for $a=b+1.$

The case $(m_1, m_2, m_3, m_4)=(1,1,2,1)$ is symmetric to the previous one.

\underline{Case $r(\alpha)=5$.}
Assume that $\alpha_1=\beta_1=a,$ and \
$$\{\alpha_i\ | \ 1\le i\le 5\}=\{\beta_i\ | \ 1\le i\le 5\}=\{a,b,c,d,0\},$$
where $p-1\ge a>b>c>d>0.$ First, we have $\alpha_2=\beta_2$ by Lemma \ref{lm:consecutive}.

For $\alpha_2\ne b-1$ and $\alpha_2\ne a-1,$ applying Lemma \ref{lm:consecutive} and considering the entry next to the unique $0$ entry in
 $\alpha+\widehat{p-b}$ (resp. $\beta+\widehat{p-b}),$ we see that the entries next to $b$ in $\alpha$ and $\beta$ are the same. Hence $\alpha=\beta$ by Lemma \ref{lm:consecutive}.

If  $\alpha_2= a-1,$ then $b=a-1$. By considering the entry next to the unique $0$ entry in
 $\alpha+\widehat{p-b-1}$ (resp. $\beta+\widehat{p-b-1}),$ we see that the entries next to $b$ in $\alpha$ and $\beta$ are the same. Hence $\alpha=\beta$ by Lemma \ref{lm:consecutive}.

For $\alpha_2=b-1$,then $c=b-1$ and we have the following distinguishable sets:
\begin{eqnarray*}
&&\{\eta\equiv(a,c,b, d, 0),\eta'\equiv(a,c,d,b,0)\},\ \{\tau\equiv(a,c,0,b,d), \tau'\equiv (a,c,d,0,b)\},\\
&&\{\pi\equiv(a,c,0,d,b),\pi'\equiv (a,c,b,0,d)\}.\end{eqnarray*}
Since $\CG_b(\eta)^\circ$ (resp. $\CG_b(\tau)^\circ$) and $\CG_b(\eta')^\circ$ (resp. $\CG_b(\tau')^\circ$) have the different numbers of connected components, we have $\eta,\ \eta',$ $\tau$ and $\tau$ are distinguishable.
Also $\pi$ and $\pi'$ are distinguishable by Lemma \ref{lm:multi-set-6}. 

\end{proof}

\subsection{Case $p=2$ and Mersenne primes} \label{sec:example-2}
In this section, we will consider the case when $2^n-1$ is prime and $p=2$.
Recall that {\bf Mersenne primes} are primes of the form $2^n-1$, and in this case $n$ must be a prime.
We will verify that Conjecture \ref{conj:main} holds in this special case.
\begin{prop}\label{prop:2-Mersenne}
If $2^n-1$ is a prime (Mersenne prime), Conjecture \ref{conj:main} holds.
\end{prop}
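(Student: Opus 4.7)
The plan is to exploit two simplifications specific to the Mersenne case: the additive character $\psi$ of $\BF_2$ takes values in $\{\pm 1\} \subset \BQ$, and every nonzero residue modulo $2^n-1$ is a unit. Since $p-1 = 1$, the only admissible $k$ in the hypothesis of Conjecture \ref{conj:main} is $k = 0$, so the assumption degenerates to the bare equality $S(\omega^{-\alpha}) = S(\omega^{-\beta})$ for regular $\omega^{-\alpha}, \omega^{-\beta}$, and I must deduce $\beta \equiv 2^{j}\alpha \pmod{2^n-1}$ for some integer $j$.

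First I would amplify the hypothesis via Galois symmetry. Because $\psi$ takes values in $\BQ$, every automorphism $\sigma_t : \zeta \mapsto \zeta^t$ in $\Gal(\BQ(\mu_{2^n-1})/\BQ)$ fixes $\psi$, and a direct unwinding of the definition of $S$ yields $\sigma_t(S(\omega^{-\alpha})) = S(\omega^{-t\alpha})$. Applying each such $\sigma_t$ to both sides of the hypothesis produces the enlarged family
$$S(\omega^{-t\alpha}) = S(\omega^{-t\beta}), \qquad t \in (\BZ/(2^n-1))^{\times}.$$

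Second, I would invoke Stickelberger's theorem (Theorem \ref{Stickel}), which gives $\ord_\FP S(\omega^{-a}) = s(a)$. The family above then forces $s(t\alpha) = s(t\beta)$ in $\BZ/(2^n-1)$ for every unit $t$. The Mersenne hypothesis now enters: since $2^n-1$ is prime, $\alpha$ is invertible, so I set $c \equiv \alpha^{-1}\beta \pmod{2^n-1}$ and substitute $u = t\alpha$ to recast the family as the single condition $s(u) = s(uc)$ for every unit $u$. Specializing to $u = 1$ gives $s(c) = s(1) = 1$, so the binary expansion of $c \in [1, 2^n-2]$ has exactly one nonzero digit, whence $c \equiv 2^j \pmod{2^n-1}$ for some $0 \le j < n$, and consequently $\beta \equiv 2^j \alpha$, as required.

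I do not foresee a serious obstacle: the Mersenne hypothesis is doing exactly the work needed to make $\alpha$ a unit and to rigidify the shape of $c$ once $s(c) = 1$ is known. Notably, this argument bypasses the $V_m$-invariants of Lemma \ref{Usame} and the delicate combinatorial case analysis deployed for $n \le 5$; the interplay of Stickelberger with the Galois action of $(\BZ/(2^n-1))^{\times}$ already suffices, reflecting the rigidity of Gauss sums when the cyclotomic field has prime conductor. The one place the argument would break down if $2^n-1$ were composite is precisely the invertibility of $\alpha$, which is what allows the reparameterization $u = t\alpha$ and the reduction to a one-variable condition.
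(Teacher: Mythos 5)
Your proof is correct and follows essentially the same route as the paper: both reduce the hypothesis (only $k=0$ occurs since $p=2$) to the family of digit-sum identities $s(t\alpha)=s(t\beta)$ for all $t$ prime to $2^n-1$, then take $t\equiv\alpha^{-1}$ and use that $s(c)=1$ forces $c\equiv 2^j\bmod{2^n-1}$. The only difference is how those identities are obtained --- the paper reads them off the prime-ideal factorization $S(\omega^{-a})\FO_M=\prod_{j\in T}\FP_{j^{-1}}^{s(aj)}$ from \cite[Theorem 11.2.2]{BEW98}, while you derive the same information by applying the automorphisms $\sigma_t$ (which fix $\psi$ since it is $\pm1$-valued) to the equality of Gauss sums and taking $\ord_\FP$ via Theorem \ref{Stickel}.
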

\begin{proof}
We will prove this statement by considering the factorization of the principal ideals generated by the Gauss sums in cyclotomic fields. Let us recall Theorem 11.2.2 in \cite{BEW98}. We follow the notation in \cite[Section 11.2]{BEW98}.
Define the cyclotomic field $M=\BQ(\varepsilon, \mu_{p^n-1}),$ where $\varepsilon$ is a primitive $p$-th root of unity. 
Let $\FO_M$ be the ring of integers of $M$. 
Denote by $T$ a complete set of coset representatives for the multiplicative quotient group $R/\langle p \rangle$ where $R=(\BZ/(p^n-1)\BZ)^\times$,
 so the cardinality of $T$ is $ \phi(p^n-1)/ n$ where $\phi$ is the Euler $\phi$-function.

For any integer $j$ coprime to $p^n-1$, define the automorphism $\sig_{j}$ of $M$
by
 $$\sig_{j}:\mu_{p^n-1}\mapsto \mu_{p^n-1}^j, \sig_j\colon \varepsilon\mapsto \varepsilon.$$
Write $\FP$ to be a prime ideal lying above $p$ in $\BQ(\varepsilon,\mu_{p^n-1})$
and $\FP_j=\sig_j(\FP)$. 
Then we obtain the prime ideal factorization
$p\FO_M=\prod_{j\in T}\FP_j^{p-1}.$
By  \cite[Theorem 11.2.2]{BEW98}, for an integer $a\not\equiv 0 \bmod{p^n-1},$
\begin{equation}\label{eq:factorization}
S(\omega^{-a})\FO_M=\prod_{j\in T}\FP_{j^{-1}}^{s(aj)}, 	
\end{equation}
where $j^{-1}$ is the inverse of $j \bmod{p^n-1}$.


Now let us apply the ideal factorization Eq. \eqref{eq:factorization} to our case.
Assume that $S(\omega^{-\alpha})=S(\omega^{-\beta})$ for non-trivial characters $\omega^{-\alpha}$ and $\omega^{-\beta}$.
Since $2^n-1$ is a prime, the orders of $\omega^{-\alpha}$ and $\omega^{-\beta}$ are $2^n-1$.
If $S(\omega^{-\alpha})=S(\omega^{-\beta})$, then $s(c\alpha)=s(c\beta)$ for all $c$ coprime to $2^n-1$ by  Eq. \eqref{eq:factorization}.

Take an integer $c$ such that $c\equiv \alpha^{-1}\bmod{2^n-1}$. 
Then $s(1)=s(\beta\alpha^{-1})$.
Note that $s(c)=1$ if and only if $c=2^i$ for some $i$.
Then $\beta\alpha^{-1}\equiv 2^i\bmod{2^n-1}$ is equivalent to $\beta\equiv 2^i\alpha\bmod{2^n-1}$.
This completes the proof.
\end{proof}

\begin{exmp}
The 50-th Mersenne prime, $M_{50}=2^{77232917}-1$, was found at December 26, 2017, and it becomes the largest prime number known to the mankind. 
Then Conjecture \ref{conj:main} holds for $\GL_{77232917}(\BF_2)$.
\end{exmp}

\subsection{$n\times 1$ counterexamples} \label{sec:example-3}
In this section, we give  counterexamples to $n\times 1$ Local Converse Problem, i.e. we need more than twisted gamma factors against characters of $\BF_p^\times$ to determine a unique cuspidal representation of $\GL_n(\BF_p)$ up to isomorphism.
Note that these examples do not contradict to Conjecture \ref{conj2} or \ref{conj:main}, since these representations are not primitive.

Let $t\geq 3$ be an integer and $p=3$. 
Referring to \cite[Theorem 11.6.1]{BEW98}, for all characters $\chi$ of $\BF_{3^{2t}}^\times$ of order $3^t+1$,
one has $S(\chi)=-p^{t}$.
For instance, let $a$ be a positive integer coprime to $p^t+1$.
Taking $\chi=\omega^{a(p^{t}-1)}$,
we have a regular character $\chi$.
For $k=0$ or $1$, by definition $a(p^{t}-1)+\hat{k}=a(p^{t}-1)+k\frac{p^{t}+1}{2}(p^t-1)=(a+k\frac{p^{t}+1}{2})(p^t-1)$,
it follows that $\omega^{a(p^{t}-1)+\hat{k}}$ is of order $p^{t}+1$ or $\frac{p^{t}+1}{2}$ depending on the parity of $t$.
In both cases, $S(\omega^{a(p^{t}-1)+\hat{k}})=(-p)^{t}$.
If the Euler $\phi$-function $\phi(p^t+1)\geq 4t$, then there exist two positive integers $a$ and $b$ such that $a$ and $b$ are coprime to $p^t+1$ and $a\neq p^jb\bmod{p^{2t}-1}$ for all $j$.
Then we have two regular characters $\omega^{\alpha}$ and $\omega^{\beta}$ of order $p^t+1$ such that $\alpha\neq p^j\beta\bmod{p^{2t}-1}$ for all $j$, where $\alpha=a(p^{t}-1)$ and $\beta(p^{t}-1)$.
Let $\pi$ and $\tau$ be the irreducible cuspidal representations of $\GL_{2t}(\BF_p)$ corresponding to $\omega^{\alpha}$ and $\omega^{\beta}$ respectively.
By $\alpha\neq p^j\beta\bmod{p^{2t}-1}$ for all $j$, $\pi$ and $\tau$ are not isomorphic.
However, for all characters $\eta$ of $\BF_p^\times$, $\gamma(\pi\times\eta,\psi)=\gamma(\pi\times\eta,\psi)$.
For example, when $n=6$, $\phi(3^3+1)=12$ and this is a counter example for $\GL_6(\BF_3)\times \GL_1(\BF_3)$ Local Converse Problem.

\subsection{  $n\times 1$ Local Converse Theorem for level zero supercuspidal}
Following Theorem \ref{level0} and the established $n\times 1$ Local Converse Theorem for finite field case,    the following theorems hold.
\begin{thm}\label{nx1padic}
Let $\CF_p$ be the   $p$-adic field with prime residue field.
Let $\pi_1$ and $\pi_2$ be a pair of irreducible, unitarizable,  level zero supercuspidal representations of $\GL_n(\CF_p),\ 2\le n\le 5.$
Suppose that $$\Gamma(s, \pi_1\times \chi, \psi_{\CF_p} )= \Gamma(s, \pi_2\times \chi, \psi_{\CF_p}), \text{ for all }\chi\in \widehat \CF_p^\times,$$
then $\pi_1$ and $\pi_2$ are isomorphic.
\end{thm}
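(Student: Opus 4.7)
The plan is to reduce Theorem~\ref{nx1padic} to the finite field converse statement Theorem~\ref{thm:main-5}, using Theorem~\ref{level0} as the bridge. For $i=1,2$ write $\pi_i \cong \cInd_{\RZ_n(\CF_p)\RK_n(\CF_p)}^{\GL_n(\CF_p)}\omega_i\tau_i$, with $\tau_i$ an irreducible cuspidal representation of $\GL_n(\BF_p)$ inflated to $\RK_n(\CF_p)$ and $\omega_i$ a character of $\RZ_n(\CF_p)=\CF_p^\times$ satisfying the compatibility $\omega_i|_{\RZ_n(\Fo)}(k) = \tau_i(\bar k)$. Two compact inductions of this form are isomorphic exactly when $\omega_1=\omega_2$ and $\tau_1\cong\tau_2$, so the proof splits into matching central characters and matching the underlying finite field cuspidals.

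For the central characters, I will invoke Corollary~\ref{char}: the hypothesis that $\Gamma(s,\pi_1\times\chi,\psi_{\CF_p})$ and $\Gamma(s,\pi_2\times\chi,\psi_{\CF_p})$ agree as functions of $s$ for all $\chi\in\widehat{\CF_p^\times}$ forces $\omega_{\pi_1}=\omega_{\pi_2}$, which in the level zero setting is precisely $\omega_1=\omega_2$.

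For the finite field side, I will feed Theorem~\ref{level0} with level zero $\GL_1$-twists. Given $\eta\in\widehat{\BF_p^\times}$, let $\tilde\eta$ denote the level zero character of $\CF_p^\times$ that inflates $\eta$ on $\Fo^\times$ and satisfies $\tilde\eta(\varpi)=1$ at a fixed uniformizer $\varpi$. Then $\tilde\eta$ is a level zero (super)cuspidal representation of $\GL_1(\CF_p)$, and Theorem~\ref{level0} applied with $n_1=n$, $n_2=1$, and $\pi_2=\tilde\eta$ yields
\begin{equation*}
\tilde\eta(-1)^{n-1}\Gamma(s,\pi_i\times\tilde\eta,\psi_{\CF_p}) = p^{(n-2)/2}\gamma(\tau_i\times\eta,\psi), \qquad i=1,2.
\end{equation*}
The hypothesis equates the two left-hand sides, so $\gamma(\tau_1\times\eta,\psi)=\gamma(\tau_2\times\eta,\psi)$ for every $\eta\in\widehat{\BF_p^\times}$. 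Since $n\le 5$, Theorem~\ref{thm:main-5} then gives $\tau_1\cong\tau_2$, and combined with $\omega_1=\omega_2$ this yields $\pi_1\cong\pi_2$.

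The analytic and number-theoretic heavy lifting is entirely absorbed by Theorems~\ref{level0} and~\ref{thm:main-5}; what remains is essentially a bookkeeping exercise. The only point that genuinely needs checking is that the family $\{\tilde\eta\mid\eta\in\widehat{\BF_p^\times}\}$ exhausts enough level zero $\GL_1$-twists so that every character of $\BF_p^\times$ appears on the right-hand side of the identity from Theorem~\ref{level0}; this is immediate from the construction of $\tilde\eta$, after which Theorem~\ref{thm:main-5} applies directly.
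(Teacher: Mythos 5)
Your proof is correct, and for $n=4,5$ it is exactly the paper's route: the bridge Theorem~\ref{level0} applied with level zero $\GL_1$-twists, feeding the resulting equality of finite-field gamma factors into the finite-field converse theorem. The differences are minor but worth noting. First, the paper's one-line proof treats $n=2,3$ by citing \cite{JiNiS15} (the $p$-adic $n\times[\frac{n}{2}]$ cases), whereas you run all of $2\le n\le 5$ uniformly through Theorem~\ref{thm:main-5}; this is legitimate since that theorem is stated for $n\le 5$, though its explicit proofs in Section~\ref{sec:main} are Propositions~\ref{n=4} and~\ref{n=5} (the $n=2,3$ finite-field cases rest on Theorem~\ref{finiteJ} together with Lemma~\ref{lm:central-character}), which is presumably why the authors fall back on the $p$-adic literature there. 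Second, you make explicit a step the paper leaves implicit: matching $\tau_1\cong\tau_2$ alone does not determine $\pi_i$, because the value $\omega_i(\varpi)$ on a uniformizer is extra data not seen by the finite-field gamma factors, so one genuinely needs Corollary~\ref{char} (or an equivalent argument) to pin down $\omega_1=\omega_2$ from the equality of $\Gamma(s,\pi_i\times\chi,\psi_{\CF_p})$ as functions of $s$; your inclusion of this makes the argument more complete than the paper's own write-up. The identity you quote from Theorem~\ref{level0}, with the constant $p^{(n-2)/2}$ and the factor $\tilde\eta(-1)^{n-1}$ cancelling on both sides, is applied correctly.
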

\begin{proof}
The result follows \cite{JiNiS15} for $n=2,\ 3$ and Theorem \ref{level0}; \ref{n=4}; \ref{n=5} for $n=4,\ 5.$

\end{proof}

\begin{thm}\label{Yunpadic}
Let $\CF$ be a $p$-adic field with $q$ elements in the residue field.
Let $\pi_1$ and $\pi_2$ be a pair of irreducible, unitarizable,  level zero supercuspidal representations of $\GL_n(\CF),$ $n<\frac{q-1}{2\sqrt{q}}+1$.
Suppose that $$\Gamma(s, \pi_1\times \chi, \psi_{\CF_p} )= \Gamma(s, \pi_2\times \chi, \psi_{\CF}), \text{ for all }\chi\in \widehat \CF^\times,$$
then $\pi_1$ and $\pi_2$ are isomorphic.
\end{thm}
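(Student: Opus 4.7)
The plan is to deduce Theorem \ref{Yunpadic} by combining the reduction of $p$-adic gamma factors to finite field gamma factors provided by Theorem \ref{level0} with the appendix result Theorem \ref{YunApp}, essentially mirroring the proof of Theorem \ref{nx1padic} but using Yun's much stronger finite field statement in place of the case-by-case verification for $n\leq 5$.

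First, I would fix the model $\pi_i\cong \cInd_{\RZ_n(\CF)\RK_n}^{\GL_n(\CF)}\omega_i\tau_i$, where $\tau_i$ is an irreducible cuspidal representation of $\GL_n(\BF_q)$ and $\omega_i$ is a character of $\RZ_n(\CF)=\CF^\times$ satisfying the compatibility $\omega_i|_{\Fo^\times}(k)=\tau_i(\ol{k})$. It suffices to show that $\tau_1\cong\tau_2$ and that $\omega_1=\omega_2$, for then the two compact inductions coincide and hence $\pi_1\cong\pi_2$.

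Next, for each character $\eta\in\widehat{\BF_q^\times}$, inflate $\eta$ to the depth zero character $\tilde\eta$ of $\CF^\times$ that is trivial on a fixed uniformizer $\varpi$. Viewed as a representation of $\GL_1(\CF)$, this $\tilde\eta$ is itself level zero (supercuspidal of $\GL_1$), with associated finite field datum $\eta\in\widehat{\BF_q^\times}=\widehat{\GL_1(\BF_q)}$. Applying Theorem \ref{level0} with $n_1=n$ and $n_2=1$ to each side of the hypothesized equality $\Gamma(s,\pi_1\times\tilde\eta,\psi_\CF)=\Gamma(s,\pi_2\times\tilde\eta,\psi_\CF)$, the common prefactor $\tilde\eta(-1)^{n-1}q^{(n-2)/2}$ cancels, leaving
$$\gamma(\tau_1\times\eta,\psi)=\gamma(\tau_2\times\eta,\psi),\quad\text{for all }\eta\in\widehat{\BF_q^\times}.$$
Let $\chi_1,\chi_2$ denote the regular characters of $\BF_{q^n}^\times$ associated to $\tau_1,\tau_2$ respectively through Green's construction. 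Theorem \ref{gauss sum} converts the displayed identity into the Gauss sum equality $G(\chi_1\cdot\eta,\psi)=G(\chi_2\cdot\eta,\psi)$ for every $\eta\in\widehat{\BF_q^\times}$.

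At this point the hypothesis $n<\frac{q-1}{2\sqrt{q}}+1$ allows me to invoke Theorem \ref{YunApp}, which gives $\chi_1=\chi_2^{q^j}$ for some $j\in\BZ$; by Green's correspondence this is precisely the condition $\tau_1\cong\tau_2$. It remains to match the central characters $\omega_1$ and $\omega_2$ on all of $\CF^\times$. The equality on $\Fo^\times$ is forced by the compatibility condition together with $\tau_1\cong\tau_2$, so only the value on the uniformizer $\varpi$ must be pinned down. For this I appeal to Corollary \ref{char} (whose proof transfers verbatim to the $p$-adic level zero setting, or equivalently can be seen by inserting $\chi=|\cdot|^s\eta$ into the hypothesis and tracking the leading behavior): the equality of twisted gamma factors against all characters of $\CF^\times$ forces the central characters of $\pi_1$ and $\pi_2$ to agree, hence $\omega_1(\varpi)=\omega_2(\varpi)$ and therefore $\omega_1=\omega_2$. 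Combining this with $\tau_1\cong\tau_2$ yields $\pi_1\cong\pi_2$.

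The proof is essentially a bookkeeping exercise once one has Theorem \ref{level0} and Theorem \ref{YunApp} in hand; no genuine obstacle arises, as the deep content sits entirely in Yun's geometric argument using Kloosterman sheaves. The only point to be mildly careful about is the handling of unramified twists (the dependence on $\omega(\varpi)$), which is standard and is already dealt with in the proof of Theorem \ref{nx1padic} via Corollary \ref{char}.
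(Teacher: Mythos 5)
Your proposal is correct and follows essentially the same route as the paper, whose proof is just a one-line citation of \cite{JiNiS15}, Theorem \ref{level0} and Theorem \ref{th:G}: reduce the $p$-adic gamma factors to finite-field ones via Theorem \ref{level0} with $n_2=1$, translate to Gauss sums via Theorem \ref{gauss sum}, apply Yun's Theorem \ref{th:G} to get $\chi_1=\chi_2^{q^j}$ (hence $\tau_1\cong\tau_2$), and pin down the central character with Corollary \ref{char} from \cite{JiNiS15}. Your write-up merely makes explicit the bookkeeping the paper leaves implicit (the depth-zero $\GL_1$ twists, cancellation of the prefactor $\tilde\eta(-1)^{n-1}q^{(n-2)/2}$, and matching $\omega_1(\varpi)=\omega_2(\varpi)$), so there is no substantive difference.
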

\begin{proof}
The result follows \cite{JiNiS15} for $n=2,\ 3$, Theorems \ref{level0} and \ref{th:G}. 
\end{proof}

\begin{thm}\label{Mersennepadic}
Let $\CF_2$ be the the $2$-adic field with residue field of 2 elements. Assume that  $2^n-1$ is a Mersenne prime.
Let $\pi_1$ and $\pi_2$ be a pair of irreducible, unitarizable,  level zero supercuspidal representations of $\GL_n(\CF_2).$
Suppose that $$\Gamma(s, \pi_1\times \chi, \psi_{\CF_2} )= \Gamma(s, \pi_2\times \chi, \psi_{\CF_2} ), \text{ for all }\chi\in \widehat \CF_2^\times,$$
then $\pi_1$ and $\pi_2$ are isomorphic.
\end{thm}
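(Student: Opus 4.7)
The plan is to reduce Theorem \ref{Mersennepadic} to the finite-field result Proposition \ref{prop:2-Mersenne} via the identity between $p$-adic and finite-field twisted gamma factors established in Theorem \ref{level0}, exactly in the spirit of the proofs of Theorems \ref{nx1padic} and \ref{Yunpadic}. Write $\pi_i \cong \cInd_{\RZ_n(\CF_2)\RK_n}^{\GL_n(\CF_2)} \omega_i \tau_i$ for $i=1,2$, where $\tau_i$ is an irreducible cuspidal representation of $\GL_n(\BF_2)$ and $\omega_i$ is a character of $\RZ_n(\CF_2)$ extending the central character of $\tau_i$ on $\RZ_n \cap \RK_n$.

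First I would take $\chi$ to be any unramified (hence level zero) character of $\CF_2^\times$, view it as a level zero supercuspidal representation of $\GL_1(\CF_2)$, and apply Theorem \ref{level0} with $n_1=n$ and $n_2=1$ to both sides of the hypothesis. After dividing out the common normalization constant $q^{(n-2)/2}=2^{(n-2)/2}$ and the sign $\chi(-1)^{n-1}$, this yields $\gamma(\tau_1 \times \bar\chi,\psi) = \gamma(\tau_2 \times \bar\chi,\psi)$, where $\bar\chi$ is the reduction of $\chi$ to $\BF_2^\times$. Because $\BF_2^\times$ is trivial, this degenerates to the single identity $\gamma(\tau_1,\psi) = \gamma(\tau_2,\psi)$; by Theorem \ref{gauss sum} it is equivalent to the bare Gauss-sum equality $G(\eta_{\tau_1},\psi) = G(\eta_{\tau_2},\psi)$, where $\eta_{\tau_i}$ denotes the regular character of $\BF_{2^n}^\times$ attached to $\tau_i$ by Green's parametrization. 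The Mersenne prime hypothesis then activates Proposition \ref{prop:2-Mersenne} (the case $k=0$ of Conjecture \ref{conj:main} in the prime field $\BF_2$), forcing $\eta_{\tau_1}$ and $\eta_{\tau_2}$ to lie in the same Frobenius orbit; equivalently, $\tau_1 \cong \tau_2$.

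It remains to promote the residual isomorphism $\tau_1 \cong \tau_2$ to an isomorphism $\pi_1 \cong \pi_2$, for which one needs $\omega_1 = \omega_2$ on the full center $\RZ_n(\CF_2)$. The equality $\tau_1 \cong \tau_2$ already fixes $\omega_i|_{\RZ_n \cap \RK_n}$, so only the values on a uniformizer remain to be pinned down. These are matched by Corollary \ref{char} applied to the $p$-adic hypothesis (valid since it is assumed for every $\chi \in \widehat{\CF_2^\times}$), which gives $\omega_{\pi_1} = \omega_{\pi_2}$ and hence $\omega_1 = \omega_2$. Thus $\pi_1 \cong \pi_2$. The conceptual subtlety, rather than a genuine obstacle, is that the apparent poverty of finite-field twisting over $\BF_2$ (there are no nontrivial characters of $\BF_2^\times$) is offset by the fact that the Mersenne prime hypothesis makes a \emph{single} untwisted Gauss-sum identity already sufficient for the converse; this is precisely why the case $p=2$ with $2^n-1$ prime admits such a short argument.
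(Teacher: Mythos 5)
Your proposal is correct and follows essentially the route the paper intends for this theorem (stated there without a written proof): reduce via Theorem \ref{level0} applied to level zero (unramified) $\GL_1$-twists to the single untwisted Gauss-sum identity over $\BF_2$, invoke Proposition \ref{prop:2-Mersenne} under the Mersenne prime hypothesis to get $\tau_1\cong\tau_2$, and pin down the central character on a uniformizer. Your explicit use of Corollary \ref{char} to match $\omega_1=\omega_2$ is a step the paper leaves implicit, and it is handled correctly.
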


\newpage

  \appendix

\section{A remark on Gauss sums}
 







\bigskip

\centerline{Zhiwei Yun \footnote{Supported by the Packard Foundation.}} 
\bigskip

\begin{thm}\label{th:G} Let $q$ be a prime power, $\psi: \FF_{q}\to \CC^{\times}$ a nontrivial character. Let $n\ge1$ be an integer satisfying $n<\frac{q-1}{2\sqrt{q}}+1$. Let $\chi_{1},\chi_{2}:\FF_{q^{n}}^{\times}\to \CC^{\times}$ two characters. Suppose that for any character $\eta:\FF^{\times}_{q}\to \CC^{\times}$ there is an equality of Gauss sums (for the finite field $\FF_{q^{n}}$)
\begin{equation*}
G(\chi_{1}\cdot \eta, \psi)=G(\chi_{2}\cdot \eta, \psi).
\end{equation*}
Then $\chi_{1}$ and $\chi_{2}$ are in the same Frobenius orbit, i.e., there exists $j\in\ZZ$ such that $\chi_{1}=\chi_{2}^{q^{j}}$. 
\end{thm}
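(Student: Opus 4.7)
The plan is to translate the Gauss-sum identity into equality of trace functions of Kloosterman sheaves on $\Gm/\BF_q$, then use an $L^{2}$/Weil~II inner-product argument to promote this pointwise equality at $\BF_q$-points to a geometric isomorphism of the sheaves, and finally read off the Frobenius orbit of $\chi$ from the local monodromy.

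First I would interchange the summation in the Gauss sum:
\[
G(\chi\cdot\eta,\psi)=\sum_{a\in \BF_{q^n}^\times}\chi(a)\,\eta(\Nr(a))\,\psi(\Tr(a))=\sum_{t\in \BF_q^\times}\eta(t)\,F_\chi(t),
\]
where $F_\chi(t):=\sum_{\Nr(a)=t}\chi(a)\psi(\Tr(a))$. Letting $\eta$ range over all characters of $\BF_q^\times$ and applying Fourier inversion turns the hypothesis into the pointwise identity $F_{\chi_1}(t)=F_{\chi_2}(t)$ for every $t\in \BF_q^\times$. Next I would realize $F_\chi$ as the trace function of an $\ell$-adic sheaf: let $R=\Res_{\BF_{q^n}/\BF_q}\Gm$, with its norm $\Nr\colon R\to\Gm$ and trace $\Tr\colon R\to\Ga$; write $\CL_\chi$ for the Kummer sheaf on $R$ attached to $\chi$ and $\CL_\psi$ for the Artin--Schreier sheaf on $\Ga$; and set
\[
\CK_\chi:=R\,\Nr_!\bigl(\CL_\chi\otimes \Tr^*\CL_\psi\bigr)[n-1]\bigl(\tfrac{n-1}{2}\bigr).
\]
After base change to $\ovl{\BF_q}$, $R$ becomes $\Gm^n$ with Frobenius cyclically permuting the coordinates, and $\CK_\chi$ is identified with the Kloosterman sheaf $\textup{Kl}_n(\psi;\chi,\chi^q,\ldots,\chi^{q^{n-1}})$ of Deligne--Katz. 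Assuming $\chi$ is primitive (i.e.\ does not factor through $\Nr_{n:d}$ for any proper $d\mid n$), this $\CK_\chi$ is a geometrically irreducible lisse sheaf on $\Gm$ of rank $n$, pure of weight $0$, tame at $0$ with local monodromy characters $\{\chi^{q^i}\}$, and totally wildly ramified at $\infty$ with Swan conductor $1$. By the Katz rigidity for such sheaves, $\CK_{\chi_1}$ and $\CK_{\chi_2}$ are geometrically isomorphic if and only if $\chi_1$ and $\chi_2$ lie in a common Frobenius orbit.

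The final step is an $L^{2}$ inequality. The previous two steps turn the hypothesis into
\[
\sum_{t\in \Gm(\BF_q)}\bigl|\tr(\Frob_t\mid \CK_{\chi_1})-\tr(\Frob_t\mid \CK_{\chi_2})\bigr|^{2}=0.
\]
Expanding the square and applying the Grothendieck--Lefschetz trace formula to the four tensor sheaves $\CK_{\chi_i}\otimes\CK_{\chi_j}^\vee$: Schur's lemma identifies the $H^{2}_c$-contribution with $(q-1)$ when the two factors are geometrically isomorphic and with $0$ otherwise, while Deligne's Weil~II bounds the $H^{1}_c$-contribution by $C(n)\sqrt{q}$, where $C(n)$ is linear in $n$ and controlled by the rank $n^{2}$ and the Swan conductors of these Kloosterman tensor products. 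Writing $\epsilon=1$ if $\CK_{\chi_1}\cong\CK_{\chi_2}$ geometrically and $\epsilon=0$ otherwise, the $L^{2}$ identity becomes
\[
2(q-1)(1-\epsilon)\leq C(n)\sqrt{q}.
\]
The hypothesis $n<\tfrac{q-1}{2\sqrt{q}}+1$ is precisely the threshold that forces $\epsilon=1$, and Katz rigidity then yields $\chi_1=\chi_2^{q^{j}}$ for some $j$.

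The main obstacle I anticipate is the sharp value of $C(n)$: one must bound the Swan conductors, and hence $\dim H^{1}_c$, of $\CK_{\chi_1}\otimes\CK_{\chi_2}^\vee$ tightly enough to match the numerical hypothesis $n<\tfrac{q-1}{2\sqrt{q}}+1$, which is where Katz's analysis of Kloosterman tensor products is indispensable. A secondary subtlety is the imprimitive case in which $\chi$ factors through $\BF_{q^d}^\times$ for some proper $d\mid n$: there $\CK_\chi$ decomposes into smaller Kloosterman pieces indexed by Frobenius orbits of characters of $\BF_{q^d}^\times$, and the argument reduces to a smaller-$n$ instance by induction on $n/d$.
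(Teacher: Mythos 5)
Your main line is essentially the proof given in the appendix: Fourier inversion on $\BF_q^\times$ converts the hypothesis into equality of the trace functions of the descended Kloosterman sheaves $\cK_{\chi_i}$ on $\Gm$ over $\BF_q$; the semisimplified tame monodromy at $0$ detects the multiset $\{\chi_i^{q^j}\}$ and hence the Frobenius orbit; and the quantitative step combines geometric irreducibility, purity, and Katz's Swan conductor computation $\Swan_{\infty}=n-1$ for the relevant tensor products with the Grothendieck--Lefschetz formula to force $q-1\le 2(n-1)\sqrt{q}$ when the two sheaves are not geometrically isomorphic. Your $L^{2}$ expansion into the four sheaves $\cK_{\chi_i}\ot\cK_{\chi_j}^{\vee}$ is just a symmetrized version of the paper's comparison of $\cK_{1}^{\vee}\ot\cK_{2}$ with $\uEnd(\cK_{1})=\Qlbar\oplus\uEnd^{0}(\cK_{1})$, and it yields the identical threshold; the only bookkeeping slip is that the $q-1$ is the full compactly supported Euler--Lefschetz contribution of the trivial summand ($-1$ from $H^{1}_{c}(\Gm)$ and $q$ from $H^{2}_{c}(\Gm)$), not the $H^{2}_{c}$ of the tensor product alone.

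The one genuine defect is your treatment of imprimitive $\chi$. You assume primitivity to secure geometric irreducibility and propose to handle $\chi=\chi_{0}\circ\Nr_{n:d}$ by decomposing $\cK_{\chi}$ into ``smaller Kloosterman pieces'' and inducting on $n/d$. No such decomposition exists: $\Kl(\psi;\xi_{1},\dots,\xi_{n})$ is geometrically irreducible for an \emph{arbitrary} $n$-tuple of tame characters, repetitions allowed, since by Katz it is already irreducible as a representation of the inertia group at $\infty$ (Theorem \ref{th:Kl}\eqref{irr}); so $\cK_{\chi}$ is irreducible whether or not $\chi$ is regular, and the theorem, which is stated for arbitrary $\chi_{1},\chi_{2}$, needs no case distinction. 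The induction you sketch would also be problematic on its own terms: Hasse--Davenport only expresses $G(\chi\cdot\eta,\psi)$ as a power of a Gauss sum over $\BF_{q^{d}}$, equality of powers does not immediately give equality of the smaller Gauss sums, and $\chi_{1},\chi_{2}$ may have different imprimitivity degrees. Dropping the primitivity hypothesis and invoking Katz's irreducibility at $\infty$ repairs this and reduces your argument to exactly the one in the appendix.
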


Before giving the proof we recall some results of Deligne and Katz on Kloosterman sheaves. Let $\ell$ be any prime different from $\textup{char}(\FF_{q})$, and choose an embedding $\iota: \Qlbar\incl \CC$. The characters $\psi,\chi_{i}$ all take values in $\Qlbar\subset\CC$. The embedding $\iota$ also gives an archimedean norm $|\cdot |$ on $\Qlbar$.

Consider the diagram of schemes over a finite field $k$
\begin{equation*}
\xymatrix{   \Gm & \Gm^{n}\ar[r]^-{\s}\ar[l]_-{\pi} & \AA^{1}}
\end{equation*}
where $\s(a_{1},\cdots, a_{n})=a_{1}+\cdots+a_{n}$ and $\pi(a_{1},\cdots,a_{n})=a_{1}\cdots a_{n}$. Let $\xi_{1},\cdots, \xi_{n}$ be characters of $k^{\times}$, each giving a Kummer local system $\cL_{\xi_{i}}$ on $\Gm$. The additive character $\psi$ gives an Artin-Schreier local system $\AS_{\psi}$ on $\AA^{1}$. We form the $\ell$-adic complex on $\Gm$
\begin{equation*}
\Kl(\psi; \xi_{1},\cdots, \xi_{n}):=\bR\pi_{!}\s^{*}\AS_{\psi}[n-1].
\end{equation*}
We shall need the following facts about $\Kl(\psi; \xi_{1},\cdots, \xi_{n})$.

\begin{thm}[Deligne and Katz]\label{th:Kl}
\begin{enumerate}
\item\label{loc} $\Kl(\psi; \xi_{1},\cdots, \xi_{n})$ is a local system of rank $n$ concentrated in degree 0 (and is called the {\em Kloosterman sheaf}). 
\item\label{irr} $\Kl(\psi; \xi_{1},\cdots, \xi_{n})$ is irreducible as a $\Qlbar$-local system over $\Gm\ot\kbar$ (in fact already as a representation of the inertia group at $\infty$).
\item\label{pure} $\Kl(\psi; \xi_{1},\cdots, \xi_{n})$ is pure of weight $n-1$ on $\Gm$ with respect to the embedding $\iota$.
\item\label{dual} The dual local system of  $\Kl(\psi; \xi_{1},\cdots, \xi_{n})$ is $\Kl(\ov\psi; \xi^{-1}_{1},\cdots, \xi^{-1}_{n})\ot\Qlbar(n-1)$, where $\ov\psi(a)=\psi(-a)$.
\item\label{m0} The semi-simplified monodromy of $\Kl(\psi; \xi_{1},\cdots, \xi_{n})$ at $0$,  as a representation of the tame inertia group at $0$, is the direct sum of characters given by $\xi_{1},\cdots, \xi_{n}$. 
\item\label{tensor} For any two sets of multiplicative characters $\xi_{1},\cdots,\xi_{n}$ and $\eta_{1},\cdots, \eta_{n}$ of $k^{\times}$, the Swan conductor of $\Kl(\psi; \xi_{1},\cdots, \xi_{n})\ot\Kl(\ov\psi; \eta_{1},\cdots, \eta_{n})$ at $\infty$ is $n-1$.
\end{enumerate}
\end{thm}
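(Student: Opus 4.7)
The plan is to follow the Deligne--Katz strategy of establishing all six properties by induction on $n$, exploiting the multiplicative convolution structure of Kloosterman sheaves; in essence this reproduces the treatment in Katz's book \emph{Gauss Sums, Kloosterman Sums, and Monodromy Groups}. Let $m:\Gm\times\Gm\to\Gm$ denote multiplication and write $F*_{!}G:=Rm_{!}(F\boxtimes G)$. The starting observation is the factorization $\pi=m\circ(\id_{\Gm}\times\pi')$, where $\pi'$ is the product map on $\Gm^{n-1}$, which yields
\begin{equation*}
\Kl(\psi;\xi_{1},\dots,\xi_{n})\;\cong\;(\AS_{\psi}\otimes\cL_{\xi_{1}})*_{!}\Kl(\psi;\xi_{2},\dots,\xi_{n})[-1],
\end{equation*}
reducing everything to iterated convolution against the base case $\Kl(\psi;\xi_{1})=\AS_{\psi}\otimes\cL_{\xi_{1}}$, a rank-one lisse sheaf pure of weight~$0$.

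For (loc), I would induct on $n$: assuming $\cF:=\Kl(\psi;\xi_{2},\dots,\xi_{n})$ is a rank-$(n-1)$ local system in degree $0$, the stalk of the convolution at $t\in\Gm$ is the compactly supported cohomology of $\Gm$ with coefficients in a wildly ramified rank-$(n-1)$ sheaf; a Grothendieck--Ogg--Shafarevich computation of the Euler characteristic, together with vanishing of $H^{0}_{c}$ and $H^{2}_{c}$ in the presence of wild ramification, concentrates the cohomology in degree~$1$ with total dimension $n$, yielding the claimed rank after the shift, and local constancy then follows from proper base change. Purity (pure) is then Deligne's Weil~II: $\sigma^{*}\AS_{\psi}$ is pure of weight $0$, $R\pi_{!}$ of a pure complex is mixed of weights $\le n-1$, and since the result is lisse all Frobenius weights must coincide with the generic value $n-1$. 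Part (dual) is Poincar\'e--Verdier duality applied to $R\pi_{!}\sigma^{*}\AS_{\psi}[n-1]$: because $\pi$ is smooth of relative dimension $n-1$ and the cohomology lives in a single degree (so $R\pi_{!}=R\pi_{*}$ on the level of local systems), duality interchanges $\psi\leftrightarrow\ov\psi$ and $\xi_{i}\leftrightarrow\xi_{i}^{-1}$ and introduces a Tate twist by $\Qlbar(n-1)$.

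Irreducibility (irr) is the subtlest point and I expect it to be the main obstacle: one must show that already the $I_{\infty}$-representation is irreducible. The approach is to prove via stationary phase that all slopes of $\Kl$ at $\infty$ equal $1/n$ and form a single break, and that the associated $I_{\infty}$-representation is induced from a rank-one character of the unique index-$n$ subgroup of $I_{\infty}$ (the ``$n$-th roots'' extension); irreducibility then follows from Mackey's criterion, since the distinct Galois/Frobenius conjugates of this character are pairwise inequivalent. For (m0), the tame monodromy at $0$: compactifying $\Gm^{n}$ along its toric boundary and running a nearby-cycles computation along the $n$ boundary divisors (each contributing one of the $\cL_{\xi_{i}}$) identifies the $I_{0}^{\mathrm{tame}}$-semisimplification with $\bigoplus_{i=1}^{n}\xi_{i}$. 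Finally, (tensor) is extracted from the slope-$1/n$ structure at $\infty$ established during (irr): the $I_{\infty}$-representation of $\Kl(\psi;\xi_{\bu})\otimes\Kl(\ov\psi;\eta_{\bu})$ decomposes into pieces of slope $0$ or $1/n$, and a multiplicity count via Frobenius reciprocity (equivalently, Laumon's local-constant product formula) pins down $\Swan_{\infty}=n-1$.
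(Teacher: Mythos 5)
A remark on the comparison first: the paper does not reprove these statements at all --- its ``proof'' consists of citations to Deligne (SGA $4\frac{1}{2}$, \emph{Sommes trigonom\'etriques}, Thm.~7.8 and Rem.~7.18) and to Katz's book on Kloosterman sheaves --- so what you have written is a reconstruction of the cited proofs, and in outline (convolution structure, Weil II, Verdier duality, break analysis at $\infty$) it does follow them. Two of your steps, however, are genuinely flawed as written. For purity, the inference ``mixed of weights $\le n-1$ and lisse, hence pure of weight $n-1$'' is not valid: a lisse mixed sheaf can have several weights (any direct sum of Tate twists does), and ``generic value'' is not a principle you can invoke. The actual argument takes the upper bound from Weil II and gets the lower bound from item (4): the dual of $\Kl(\psi;\xi_{1},\dots,\xi_{n})$ is $\Kl(\ov\psi;\xi_{1}^{-1},\dots,\xi_{n}^{-1})\ot\Qlbar(n-1)$, which is again mixed of weights $\le n-1$ by the same Weil II bound, so dualizing gives weights $\ge n-1$ and hence purity (alternatively, mixed plus arithmetic irreducibility forces purity via the weight filtration, but then you still have to pin down the weight). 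For irreducibility, your route --- induce from a character of ``the unique index-$n$ subgroup of $I_{\infty}$'' and apply Mackey --- only makes sense when $p\nmid n$: open subgroups of $I_{\infty}$ containing the wild inertia correspond to open subgroups of the tame quotient $\prod_{\ell\ne p}\ZZ_{\ell}$, which has none of index divisible by $p$, and Katz's induced description of $\Kl|_{I_{\infty}}$ is indeed proved only for $p\nmid n$. Since the appendix needs the theorem for all $n<\frac{q-1}{2\sqrt{q}}+1$, including $n$ divisible by $p$, you should use the argument that works uniformly: all breaks at $\infty$ equal $1/n$ and $\Swan_{\infty}=1$, so a nonzero proper $I_{\infty}$-subrepresentation of rank $r<n$ would have Swan conductor $r/n\notin\ZZ$, contradicting Hasse--Arf integrality; this gives irreducibility already over $I_{\infty}$ with no hypothesis on $p$. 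The same caveat applies to your sketch of (6), which leans on the induced structure; the break/cancellation analysis behind Katz's Prop.~10.4.1 is what survives in general.

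There are also smaller soft spots to tighten. Lisseness of the convolution does not ``follow from proper base change'': base change only computes stalks, and to conclude that $R^{1}m_{!}$ is lisse you need, besides the fiberwise vanishing of $H^{0}_{c}$ and $H^{2}_{c}$ and the constant Euler characteristic you computed, the Deligne--Laumon semicontinuity of Swan conductors (or Katz's formalism for convolution of sheaves tame at $0$ and totally wild at $\infty$). In the duality step, $R\pi_{!}\cong R\pi_{*}$ is not a consequence of concentration in a single degree; it holds because the relevant sheaves are totally wild at the boundary points of the fibers, so the forget-supports map has vanishing cone there. Finally, with your unshifted convention $F*_{!}G=Rm_{!}(F\boxtimes G)$ the bookkeeping should read $\Kl(\psi;\xi_{1},\dots,\xi_{n})\cong\bigl((\AS_{\psi}\ot\cL_{\xi_{1}})*_{!}\Kl(\psi;\xi_{2},\dots,\xi_{n})\bigr)[1]$, since the convolution sits in degree $1$, not $[-1]$; and note that the paper's displayed definition of $\Kl$ omits the Kummer factors $\cL_{\xi_{i}}$, which your argument --- correctly --- restores.
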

For \eqref{loc} and \eqref{pure}, see \cite[Thm. 7.8 and Remark 7.18]{Deligne}; for \eqref{irr}, see \cite[Cor. 4.1.2]{Katz}; for  \eqref{dual}, see \cite[Cor.4.1.3]{Katz}; for \eqref{m0}, see \cite[Prop 7.3.2]{Katz}; for \eqref{tensor}, see \cite[Prop.10.4.1]{Katz}.

\begin{proof}[Proof of Theorem \ref{th:G}] Let $k=\FF_{q}$ and $k'=\FF_{q^{n}}$. Consider the diagram of schemes over $k=\FF_{q}$
\begin{equation*}
\xymatrix{   \Gm & \Res_{k'/k}\Gm\ar[r]^-{\Tr}\ar[l]_-{\Nm} & \AA^{1}}.
\end{equation*}
Recall that $\chi_{i}$ gives a Kummer local system $\cL_{\chi_{i}}$ on $\Res_{k'/k}\Gm$: the Lang cover $\pi: \Res_{k'/k}\Gm\to \Res_{k'/k}\Gm$ is a $(\Res_{k'/k}\Gm)(k)=k'^{\times}$-torsor, and $\cL_{\chi_{i}}$ is the direct summand of $\pi_{*}\Qlbar$ on which $k'^{\times}$ acts via $\chi_{i}$.  Consider the $\ell$-adic complex on $\Gm$
\begin{equation*}
\cK_{i}=\bR\Nm_{!}(\Tr^{*}\AS_{\psi}\otimes \cL_{\chi_{i}}).
\end{equation*}
For $a\in k^{\times}$, consider the trace of the geometric Frobenius at $a$ acting on the stalk of $\cK_{i}$:
\begin{equation*}
\cK_{i}(a):=\Tr(\Frob_{a}, \cK_{i}|_{a}).
\end{equation*}
From the construction of $\cK_{i}$ and the Grothendieck-Lefschetz trace formula, we have for any $a\in k^{\times}=\Gm(k)$,
\begin{equation*}
\cK_{i}(a)=(-1)^{n-1}\sum_{b\in k'^{\times}, \Nm(b)=a}\chi_{i}(b)\psi(\Tr_{k'/k}(b)).
\end{equation*}

The fact that $G(\chi_{1}\cdot\eta, \psi)=G(\chi_{2}\cdot\eta, \psi)$ for all characters $\eta$ of $\FF^{\times}_{p}$ implies, via the inverse Fourier transform on $k^{\times}$, that
\begin{equation*}
\sum_{b\in k'^{\times}, \Nm(b)=a}\chi_{1}(b)\psi(\Tr_{k'/k}(b))=\sum_{b\in k'^{\times}, \Nm(b)=a}\chi_{2}(b)\psi(\Tr_{k'/k}(b))
\end{equation*}
for all $a\in k^{\times}$. Therefore,
\begin{equation}\label{K1=K2}
\cK_{1}(a)=\cK_{2}(a), \quad \forall a\in k^{\times}.
\end{equation}

If we base change from $k$ to $k'$, then $ \Res_{k'/k}\Gm$ becomes a split torus $\Gm^{n}$, and $\cL_{\chi_{i}}$ becomes the external tensor product of the Kummer local systems $\cL_{\chi^{q^{j}}_{i}}$ for $j\in\ZZ/n\ZZ\cong\Gal(k'/k)$. Therefore, 
\begin{equation*}
\cK_{i}|_{\Gm\ot k'}\cong \Kl(\psi, \chi_{i},\chi^{q}_{i},\cdots, \chi^{q^{n-1}}_{i}).
\end{equation*}

Suppose $\chi_{1}$ and $\chi_{2}$ are not in the same Frobenius orbit, then the multisets of characters $\{\chi_{1},\chi^{q}_{1},\cdots, \chi^{q^{n-1}}_{1}\}$ and $\{\chi_{2},\chi^{q}_{2},\cdots, \chi^{q^{n-1}}_{2}\}$ are different.
Therefore $\cK_{1}$ and $\cK_{2}$ have different semi-simplified monodromy at $0$ by Theorem \ref{th:Kl}\eqref{m0}. 
Hence they are not isomorphic to each other over $\Gm\ot\kbar$.

Now consider the cohomology of  $\cK^{\vee}_{1}\otimes\cK_{2}$. Since $\cK_{1}$ and $\cK_{2}$ are not geometrically isomorphic, 
one has $\cohog{*}{\Gm\ot\kbar,\cK^{\vee}_{1}\ot\cK_{2}}=0$ for $*\ne1$. 
By Theorem \ref{th:Kl} \eqref{tensor} and \eqref{dual}, we have
\begin{equation*}
\Swan_{\infty}(\cK^{\vee}_{1}\ot\cK_{2})=n-1.
\end{equation*}
Therefore by the Grothendieck-Ogg-Shafarevich formula, 
\begin{equation}\label{rk12}
\dim \cohoc{1}{\Gm\ot\kbar,\cK^{\vee}_{1}\ot\cK_{2}}=\Swan_{\infty}(\cK^{\vee}_{1}\ot\cK_{2})=n-1.
\end{equation}
By the Grothendieck-Lefschetz trace formula,
\begin{equation}\label{K12}
-\Tr(\Frob,\cohoc{1}{\Gm\ot\kbar,\cK^{\vee}_{1}\ot\cK_{2}})=\sum_{a\in k^{\times}}\cK^{\vee}_{1}(a)\cK_{2}(a).
\end{equation}

One the other hand, consider the cohomology of $\uEnd(\cK_{1})=\cK^{\vee}_{1}\otimes\cK_{1}$.  Let $\uEnd^{0}(\cK_{1})$ be the trace-free summand of $\uEnd(\cK_{1})$, then $\uEnd(\cK_{1})\cong\Qlbar\oplus \uEnd^{0}(\cK_{1})$. Since $\cK_{1}$ is geometrically irreducible by Theorem \ref{th:Kl} \eqref{irr}, $\cohoc{*}{\Gm\ot\kbar,\uEnd^{0}(\cK_{1})}=0$ for $*\ne1$. The same Swan conductor calculation as for $\cK_{1}\ot\cK_{2}^{\vee}$ shows that
\begin{equation}\label{rkEnd}
\dim \cohoc{1}{\Gm\ot\kbar,\uEnd^{0}(\cK_{1})}=\Swan_{\infty}(\uEnd^{0}(\cK_{1}))=\Swan_{\infty}(\cK^{\vee}_{1}\ot\cK_{1})=n-1.
\end{equation}
By the Grothendieck-Lefschetz trace formula,
\begin{eqnarray}
\notag&&-\Tr(\Frob,\cohoc{1}{\Gm\ot\kbar,\End^{0}(\cK_{1})})+q-1\\
\notag&=&-\Tr(\Frob,\cohoc{1}{\Gm\ot\kbar,\End^{0}(\cK_{1})})+\Tr(\Frob, \cohoc{*}{\Gm\ot\kbar,\Qlbar})\\
\notag&=&\Tr(\Frob,\cohoc{*}{\Gm\ot\kbar,\cK^{\vee}_{1}\ot\cK_{1}})\\
\label{K11}&=&\sum_{a\in k^{\times}}\cK^{\vee}_{1}(a)\cK_{1}(a).
\end{eqnarray}
Comparing \eqref{K11} and \eqref{K12}, using \eqref{K1=K2}, we get
\begin{equation}\label{tr eq}
-\Tr(\Frob,\cohoc{1}{\Gm\ot\kbar,\cK^{\vee}_{1}\ot\cK_{2}})=-\Tr(\Frob,\cohoc{1}{\Gm\ot\kbar,\End^{0}(\cK_{1})})+q-1.
\end{equation}
Now since $\cK_{i}$ are pure of weight $n-1$ by Theorem \ref{th:Kl} \eqref{pure}, we have $\cK^{\vee}_{1}\ot\cK_{2}$ and $\End^{0}(\cK_{1})$ are both pure of weight 0. 
Hence $\cohoc{1}{\Gm\ot\kbar,\cK^{\vee}_{1}\otimes\cK_{2}}$ and $\cohoc{1}{\Gm\ot\kbar,\uEnd^{0}(\cK_{1})}$ have weights $\le 1$ (in fact pure of weight $1$ but we shall not need this fact). 
Using the rank calculations \eqref{rk12} and \eqref{rkEnd}, we get
\begin{eqnarray*}
|\Tr(\Frob,\cohoc{1}{\Gm\ot\kbar,\cK^{\vee}_{1}\ot\cK_{2}})|\le (n-1)\sqrt{q},\\
|\Tr(\Frob,\cohoc{1}{\Gm\ot\kbar,\End^{0}(\cK_{1})})|\le (n-1)\sqrt{q}.
\end{eqnarray*}
Together with \eqref{tr eq} we get
\begin{equation*}
q-1\le 2(n-1)\sqrt{q}
\end{equation*}
or $n\ge \frac{q-1}{2\sqrt{q}}+1$ equivalently, contradicting the condition that $n<\frac{q-1}{2\sqrt{q}}+1$. Therefore $\chi_{1}$ and $\chi_{2}$ must be in the same Frobenius orbit. 
\end{proof}

\begin{rmk}\label{rmk:appendix} Theorem \ref{th:G} admits the following generalization. For any \'etale $\FF_{q}$-algebra $A$ of  degree $n$, any character $\chi: A^{\times}\to \CC^{\times}$, one can define the Gauss sum
\begin{equation*}
G_{A}(\chi,\psi):=\sum_{a\in A^{\times}}\chi(a)\psi(\Tr_{A/k}(a)).
\end{equation*} 
Now let $A,A'$ be two  \'etale $\FF_{q}$-algebras  of  degree $n$, and $\chi,\chi'$ be characters of $A^{\times}$ and $A'^{\times}$ respectively. The character  $\chi$ can be viewed as a $\Frob$-stable divisor (i.e., integral linear combination of elements) of $\Gamma=\varinjlim \wh{\FF^{\times}_{q^{n}}}$ (using inflation via norm maps as transition maps) of degree $n$. If $\chi$ and $\chi'$ define the same divisors of $\Gamma$, then for any character $\eta$ of $\FF_{q}^{\times}$, we have
\begin{equation}\label{GAA'}
\ep_{A}G_{A}(\chi\cdot \eta,\psi)=\ep_{A'}G_{A'}(\chi'\cdot\eta,\psi).
\end{equation}
Here $\ep_{A}=(-1)^{n-r}$, if $A\cong k_{1}\times\cdots\times k_{r}$ where $k_{i}$ are fields (so $\ep_{A}$ is the sign of the permutation action of Frobenius on the $\kbar$-points of $\Spec A$).

The corresponding generalization of Theorem \ref{th:G} reads:   if $n<\frac{q-1}{2\sqrt{q}}+1$ and \eqref{GAA'} holds for all characters $\eta$ of $\FF_{q}^{\times}$, then $\chi$ and $\chi'$ define the same divisors of $\Gamma$. The proof is the same as the proof of Theorem \ref{th:G}.
\end{rmk}

\begin{rmk}
Theorem \ref{th:G} does not say anything when $n$ is large compared to $\sqrt{q}$. However, Conjecture \ref{conj:Gauss-sum}  predicts that when $n$ is prime, the conclusion of Theorem \ref{th:G}  should be true for any $q$. This suggests the following conjecture for Kloosterman sheaves.

\begin{conj}
Let $n$ be a prime. Let $\psi:\FF_{q}\to \Qlbar^{\times}$ be a nontrivial character. Let $\xi_i, \eta_i\in \Gamma=\varinjlim \wh{\FF^{\times}_{q^{n}}}$ for $1\le i\le n$. Assume that the Kloosterman sheaves $\Kl(\psi; \xi_1,\cdots,\xi_n)$ and $\Kl(\psi; \eta_1,\cdots,\eta_n)$, originally defined over $\GG_{m,\ov{\FF_{q}}}$,  descend to local systems $\cK_{1}$ and $\cK_{2}$ over $\GG_{m,\FF_q}$ (this is equivalent to saying that the divisors  $\sum_{i=1}^{n}\xi_i$ and $\sum_{i=1}^{n}\eta_i$ on $\Gamma $ are Frobenius invariant). If $\cK_{1}$ and $\cK_{2}$ have the same Frobenius traces on all $\FF_{q}$-points of $\Gm$, then $\cK_1\cong\cK_2$, and in particular,  $\sum_{i=1}^{n}\xi_i=\sum_{i=1}^{n}\eta_i$ as divisors on $\Gamma$.
\end{conj}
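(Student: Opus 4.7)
The plan is to argue by contradiction, assuming $\cK_1\not\cong\cK_2$ as arithmetic local systems on $\Gm/\FF_q$, and to split into two cases according to the geometric isomorphism class. The semisimplified tame inertia representation at $0$ (Theorem \ref{th:Kl}\eqref{m0}) reads off the multiset $\{\xi_i\}$ (resp.~$\{\eta_i\}$), so $\cK_1$ and $\cK_2$ are geometrically isomorphic precisely when these multisets agree as Frobenius-stable divisors on $\Gamma$.

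\textbf{Case A: $\cK_1$ and $\cK_2$ are geometrically isomorphic.} By the geometric irreducibility of Kloosterman sheaves (Theorem \ref{th:Kl}\eqref{irr}), any two $\FF_q$-descents of a fixed geometric local system differ by twisting by a constant rank-one sheaf $L_\alpha$ with $\alpha=\alpha(\Frob)\in\Qlbar^\times$. The equal-trace hypothesis then reads $\alpha\cdot\cK_1(a)=\cK_1(a)$ for every $a\in\FF_q^\times$, so it suffices to exhibit a single $a\in\FF_q^\times$ with $\cK_1(a)\ne 0$. Applying the Grothendieck--Lefschetz trace formula to $\uEnd(\cK_1)=\Qlbar\oplus\uEnd^0(\cK_1)$, using purity (Theorem \ref{th:Kl}\eqref{pure}) and the Swan bound from \eqref{tensor}, gives the moment identity $q^{-(n-1)}\sum_{a\in\FF_q^\times}|\cK_1(a)|^2\ge (q-1)-(n-1)\sqrt{q}$, which is strictly positive outside a finite list of $(n,q)$ that can be handled by direct inspection.

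\textbf{Case B: $\cK_1$ and $\cK_2$ are not geometrically isomorphic.} Then the Frobenius-stable multisets $\{\xi_i\}$ and $\{\eta_i\}$ differ. Since $n$ is prime, each Frobenius orbit has size $1$ or $n$, so each multiset is either (i) a multiplicity pattern of Frobenius-fixed characters (which factor through $\FF_q^\times$ via the norm), or (ii) a single full Frobenius orbit of size $n$. In the cross-case and the type (i)--(i) case both sheaves are hypergeometric with abelian input, and their trace functions on $\FF_q^\times$ admit an explicit Fourier expansion over $\wh{\FF_q^\times}$ in terms of Gauss sums $G(\xi_i\cdot\eta,\psi)$; here I would separate $\cK_1$ and $\cK_2$ directly by comparing the resulting Fourier coefficients, reducing the hypergeometric case to a finite-dimensional linear-algebra statement about abelian Gauss sums.

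The principal obstacle lies in the subcase (ii)--(ii), where $\cK_i=\cK_{\chi_i}$ for characters $\chi_1,\chi_2$ of $\FF_{q^n}^\times$ in distinct Frobenius orbits; the conjecture then specializes to Conjecture \ref{conj:Gauss-sum}. The cohomological method used in the proof of Theorem \ref{th:G} controls $\Tr(\Frob,\cohoc{1}{\Gm\otimes\kbar,\cK_1^\vee\otimes\cK_2})$ only up to $(n-1)\sqrt{q}$, which is insufficient once $n$ is large compared with $\sqrt{q}$. To push through the remaining range I would try one of two refinements, each exploiting the primality of $n$ essentially: (a) an arithmetic route via Gross--Koblitz and Stickelberger (as in Section \ref{sec:main}), using that a non-$\Frob$-fixed character of $\FF_{q^n}^\times$ has full orbit of size $n$ to derive rigid $p$-adic valuation constraints on $G(\chi_i\cdot\eta,\psi)$, in the spirit of Proposition \ref{prop:2-Mersenne}; or (b) a geometric route decomposing $\cK_1^\vee\otimes\cK_2$ under multiplicative convolution with Kummer sheaves $\cL_\eta$ and showing that enough summands become geometrically constant (contributing to $\cohoc{2}$ rather than $\cohoc{1}$), effectively sharpening the Swan bound. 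Obtaining a uniform version of either strategy valid for all primes $n$ and all $q$ is the main difficulty.
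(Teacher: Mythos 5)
The statement you are proving is stated in the paper as an open conjecture (it appears in a remark at the end of the appendix precisely because Theorem \ref{th:G} says nothing when $n$ is large compared to $\sqrt{q}$), and your proposal does not close that gap: your own Case B, subcase (ii)--(ii), is exactly the content of Conjecture \ref{conj:Gauss-sum} for characters in distinct full Frobenius orbits, and you only list two possible refinements (Gross--Koblitz/Stickelberger valuations, or a sharpened convolution/Swan analysis) without carrying either out. So what you have is a reduction of the conjecture to its known hard core, not a proof; the cohomological bound you invoke is the same one used in the proof of Theorem \ref{th:G} and, as the paper notes, it is intrinsically limited to the range $n<\frac{q-1}{2\sqrt{q}}+1$.

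Two further points in the parts you do treat. In Case A, the second-moment inequality $q^{-(n-1)}\sum_{a}|\cK_1(a)|^2\ge (q-1)-(n-1)\sqrt{q}$ is positive only when $n<\frac{q-1}{\sqrt{q}}+1$; for each fixed $q$ this fails for all sufficiently large primes $n$, so the exceptional set of pairs $(n,q)$ is infinite, not finite, and the nonvanishing of the trace function needed to kill the twisting character is not established in the conjecture's full range. In Case B, your orbit classification is wrong as stated: the $\xi_i$ are elements of $\Gamma=\varinjlim \wh{\FF^{\times}_{q^{m}}}$, not characters of $\FF_{q^n}^{\times}$, so Frobenius orbits can have any size (e.g.\ for $n=5$ a Frobenius-stable divisor can consist of two orbits of size $2$ and one fixed character), and "size $1$ or $n$ since $n$ is prime" does not apply; the mixed-orbit configurations you omit are genuinely part of the conjecture and are not covered by the abelian/hypergeometric Fourier argument you sketch. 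In short, the proposal identifies the right landscape but contains a genuine gap at the central case and errors in the two auxiliary cases.
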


\end{rmk}


\end{document}